\def\F{\mathbb F}
\def\Fq{\mathbb{F}_q}
\def\Fqn{\mathbb{F}_{q^n}}
\def\PG{{\rm{PG}}}
\def\cC{\mathcal{C}}
\newcommand{\qbin}[2]{\genfrac{[}{]}{0pt}{}{#1}{#2}}
\DeclareMathOperator\rk{\mathrm{rk}}
\DeclareMathOperator\Aut{\mathrm{Aut}}
\DeclareMathOperator\GaL{\Gamma\mathrm{L}}
\DeclareMathOperator\PGL{\mathrm{PGL}}
\DeclareMathOperator\PGaL{\mathrm{P}\Gamma\mathrm{L}}
\def\GL{{\rm{GL}}}
\newcommand{\ints}{\mathrm{intn}_\sigma(\Gamma)}
\def\diag{{\mathrm{diag}}}
\def\cB{{\mathcal{B}}}
\def\ptra{\mathscr{A}}
\theoremstyle{plain} 
\newtheorem{thm}{Theorem}[section] 
\newtheorem{cor}[thm]{Corollary} 
\newtheorem{lem}[thm]{Lemma} 
\newtheorem{prop}[thm]{Proposition}
\newtheorem{proposition}[thm]{Proposition} 
\newtheorem{definition}[thm]{Definition}
\title{Scattered polynomials: an overview\\
on their properties, connections and applications}
\author{Giovanni Longobardi}
\date{}
\begin{document}

\maketitle

\begin{abstract}
The aim of this survey is to outline the state of the art in research on a class of linearized polynomials with coefficients over finite fields, known as \textit{scattered polynomials}. These have been studied in several contexts, such as those described in [A.~Blokhuis, M.~Lavrauw. Scattered spaces with respect to a spread in $\mathrm{PG}(n, q)$. \textit{Geom. Dedicata} \textbf{81}(1) 
 (2000), 231--243]  and [G.~Lunardon, O.~Polverino. Blocking sets and derivable partial spreads. \textit{J. Algebraic Combin.} \textbf{14} (2001), 49--56]. Recently, their connection to maximum rank-metric codes was brought to light in [J.~Sheekey. A new family of linear maximum rank distance codes. \textit{Adv. Math. Commun.} \textbf{10}(3) (2016), 475--488.]. This link has significantly advanced their study and investigation, sparking considerable interest in recent years.\\
Here, we will explore their relationship with certain subsets of the finite projective line $\PG(1,q^n)$ known as \textit{maximum scattered linear sets}, as well as with codes made up of square matrices of order $n$ equipped with the rank metric. We will review the known examples of scattered polynomials up to date and discuss some of their key properties. We will also address the classification of maximum scattered linear sets of the finite projective line 
$ \PG(1,q^n)$ for small values of $n$ and discuss characterization results for the examples known so far.\\
Finally, we will retrace how each scattered polynomial gives rise to a translation plane, as discussed in [V. Casarino, G. Longobardi, C. Zanella. Scattered linear sets in a finite projective line and translation planes, \textit{Linear Algebra Appl.} \textbf{650} (2022), 286–298] and in [G. Longobardi, C. Zanella. A standard form for scattered linearized polynomials and properties of the related translation planes, \textit{J. Algebr. Comb.} \textbf{59}(4) (2024), 917–937].

\end{abstract}

\medskip 

\noindent \textsc{2020 MSC:}  51A40, 05B25, 51E14, 51E20, 94B05 \\
\textit{Keywords:}  finite field, linearized polynomial, scattered polynomial, projective space, linear set, rank-metric code

\section{Introduction and Preliminaries}
Let $q=p^e$  where $p$ is a prime number and let $\F_{q^n}$ be the finite field of order $q^n$. A \textit{linearized polynomial}, or a $q$-$polynomial$, with coefficients over $\F_{q^n}$ is a polynomial of the form
\begin{equation*}
f:=f(X)=\sum_{i=0}^{\ell} c_i X^{q^i},
\end{equation*}
where $c_i \in \F_{q^n}$ and $\ell$ is a non-negative integer. We will denote the set of these polynomials by $\mathscr{L}_{n,q}[X]$. If $\ell$ is the largest integer such that $c_\ell \neq 0$, we say that $\ell$ is the $q$-$degree$ $\deg_q(f)$ of $f$. It is straightforward to show that  if $f$ is a linearized polynomial the map $ a \in \F_{q^n} \longmapsto f(a) \in \F_{q^n}$ is  an $\F_q$-linear endomorphism of $\F_{q^n}$, when $\F_{q^n}$ is seen as a vector space over $\F_q$.\\
In \cite{Sheekey2016}, Sheekey introduced a class of linearized polynomials called scattered. More precisely, a \textit{scattered linearized polynomial} over $\F_{q^n}$ is an element $f \in \mathscr{L}_{n,q}[X]$ such that 
for any $y, z\in\Fqn^*$, the condition
\begin{equation}\label{scatt-property}
\frac{f(y)}{y}=\frac{f(z)}{z}
\end{equation}
implies that $y$ and $z$ are $\Fq$-linearly dependent.\\
Although scattered linearized polynomials appeared in different contexts and have been studied for about twenty years because of their interest in various combinatorial contexts, such as blocking sets, planar spreads and translation planes \cite{blokhuis_scattered_2000,B,Lunardon_Polverino, Ostrom}, their link with the theory of rank-metric codes gave a new gain to their investigation. In this survey we will outline the state of the art of this research and recall some of their applications.

\subsection{Rank-metric codes} \label{RM-codes}
Coding theory is the branch of mathematics and computer science that deals with the design of codes for the efficient and reliable transmission of information across noisy communication channels. The primary goal is to create methods that allow messages to be transmitted in such a way that errors introduced during transmission (due to noise, interference, or other issues) can be detected and corrected, ensuring that the original message is accurately received.
Error-correcting codes are specially designed codes that allow the receiver to correct errors without needing to retransmit the message. Here, the key concept is the use of a distance function, which measures how different a received message is from valid codewords, enabling error correction by mapping the received (possibly corrupted) message to the closest valid codeword.

The codes in \textit{Hamming metric} are certainly the most widely studied and used. Here, the codewords are vectors with entries over a finite field and the distance between two of them is determined by counting how many positions they differ, for more details see \cite{McWilliams_Sloane}. 
Another approach is rank-metric coding proposed in \cite{Kotter_Kschischang}, where codewords are matrices. In this case, the distance between two codewords is defined as the rank of their difference.
This is very useful in encoding through different types of channels, including networks. \\

In the following we describe these codes and their representation in the linearized polynomials setting. Throughout this paper, if $S$ is a set of field elements (or vectors), we denote by $S^*$ the set of non-zero
elements (non-zero vectors) of $S$.
Let $\Fq$ be the finite field of $q$ elements and let $\mathbb{F}_q^{m \times n}$ be the set of $m \times n$ matrices with entries over $\Fq$ endowed with \textit{rank distance} 
\begin{equation}\label{rankdistance}
    d_{\rk}: (A,B) \in \mathbb{F}_q^{m \times n} \times \mathbb{F}_q^{m \times n} \longrightarrow \rk(A-B) \in \{0,1,\ldots, \min\{m,n\}\}.
\end{equation}
A subset of $\mathbb{F}_q^{m \times n}$, containing at least two elements, is called a {\em rank-metric code}, \textit{RM}-\textit{code} shortly. The {\em minimum distance} of a code $\mathcal{C}$ is defined by 
\begin{equation*}
   d_{\rk}(\cC)= \min_{\substack{A, B \in \cC\\A \neq B}} d_{\rk}(A,B).
\end{equation*}
If $d=d_{\rk}(\mathcal{C})$, $\mathcal{C}$ is said to be a rank-metric code with parameters $(m, n, q;d)$.
If $\cC$ is an $\Fq$-linear subspace of $\Fq^{m\times n}$, then $\cC$ is called $\Fq$-\textit{linear} and its
dimension $\dim_{\Fq}\cC$ is defined to be the dimension of $\cC$ as  subspace over $\Fq$.

For the applications in classical coding theory, given the positive integers $m, n$ and $1 \leq d \leq \min\{m, n\}$, it is desirable to have rank-metric codes which are
as large as possible in size \cite{Gabidulin_ Paramonov_Tretjakov, Kotter_Kschischang, Kotter_Kschischang_Silva}.
In \cite[Theorem 5.4]{Delsarte}, Delsarte proved that the size of a rank-metric code with parameters $(m,n,q;d)$ must satisfy an upper
bound, the so-called \textit{Singleton-like bound}. Precisely,
\begin{equation} \label{singleton}
\left \vert \cC\right \vert \leq q^{\max\{m,n\}(\min\{m,n\}-d+1)}.
\end{equation}
If the size of the code $\cC$ meets this bound, then $\cC$ is called \textit{maximum rank distance
code}, or \textit{MRD} \textit{code} for short. 
The \textit{adjoint code} of $\mathcal{C}$ is defined as the code made up of the transposes of matrices belonging to $\mathcal{C}$:
\begin{equation*}
    \mathcal{C}^t=\{C^t: C \in \mathcal{C}\},
\end{equation*}
where the superscript $t$ stands for the matrix transposition.
Two $\Fq$-linear rank-metric codes $\mathcal{C}, \mathcal{C}^{\prime} \subseteq \mathbb{F}_q^{m \times n}$, $m,n \geq 2$, are called {\em equivalent}, in symbols $\cC \cong \cC'$, if there exist $P\in \GL(m, q)$, $Q \in \GL(n,q)$ and a field automorphism $\rho \in \Aut(\mathbb{F}_q)$ such that
$$\mathcal{C}^{\prime}=P\mathcal{C}^{\rho}Q=\left\{ PC^{\rho}Q: C \in \mathcal{C} \right\}.$$
 
When $m=n$, in addition to being equivalent, two codes are said \textit{adjointly equivalent} if
$$\mathcal{C}^{\prime}=P(\mathcal{C}^t)^{\rho}Q=\left\{ PC^{\rho}Q: C \in \mathcal{C}^t \right\}.$$

In general, determining whether two RM-codes are equivalent is challenging. The idealisers provide a useful criterion for addressing this equivalence problem.
More precisely, the \textit{left idealiser} and \textit{right idealiser} of a rank-metric code $\cC \subseteq \Fq^{m \times n}$ are defined as
\[I_L(\cC) =\{X \in \F_{q}^{m \times m} : X C \in \cC \text{ for all } C\in \cC \}, \]
and
\[I_R(\cC) =\{Y \in \F_{q}^{n \times n}: C Y \in \cC \text{ for all }C \in \cC \}, \]
respectively. Indeed, in \cite[Proposition 3.1]{Lunardon_Trombetti_Zhou},  the authors proved that equivalent codes have equivalent idealisers. Moreover, in \cite[Theorem 5.4]{Lunardon_Trombetti_Zhou}, they showed that if $m \leq n$ and $\cC$ is an $\Fq$-\textit{linear} MRD code with minimum distance $d$, then $I_L(\cC)$ is a field with $ q \leq |I_L(\cC) |\leq q^m$, and if  $\max\{d, m - d + 2\} \geq  \left \lfloor  \frac{n}{2} \right \rfloor+ 1$, then its  right idealiser $I_R(\cC)$ is a field as well, with $q \leq |I_R(\cC)| \leq q^n$, see also   \cite[Result 3.4]{Polverino_Zullo}. Actually, the following holds.

\begin{thm} \label{idealizers} \cite[Theorem 3.1]{Longobardi_Zanella2024} Let $\cC \subseteq \F_q^{m \times n}$ be an $\F_q$-linear rank-metric code. If $I_L(\cC)$  (resp.\ $I_R(\cC)$) is a field, then it is isomorphic to a subfield of $\F_{q^m}$ (resp.\ $\F_{q^n})$. \end{thm}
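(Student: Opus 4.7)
The plan is to exploit the fact that $I_L(\cC)$ (resp.\ $I_R(\cC)$) is always an $\F_q$-subalgebra of the matrix algebra $\F_q^{m \times m}$ (resp.\ $\F_q^{n \times n}$), and to combine the hypothesis that it is a field with a natural module-theoretic dimension count on $\F_q^m$ (resp.\ $\F_q^n$).

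First, I would check that $I_L(\cC)$ contains the scalar matrices $\{\lambda I_m : \lambda \in \F_q\}$. This is immediate because $\cC$ is $\F_q$-linear: for every $\lambda\in\F_q$ and $C\in\cC$ one has $(\lambda I_m)C=\lambda C\in\cC$. In particular the identity $I_m$ lies in $I_L(\cC)$. Under the assumption that $I_L(\cC)$ is a field, it is therefore a finite field containing an isomorphic copy of $\F_q$, so its cardinality is $q^k$ for some positive integer $k$, and $I_L(\cC) \cong \F_{q^k}$.

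Next, I would turn $\F_q^m$ into a (left) $I_L(\cC)$-module via the usual matrix-by-vector action. Since $I_L(\cC)$ is a field, this module structure makes $\F_q^m$ into an $\F_{q^k}$-vector space; setting $d:=\dim_{\F_{q^k}}\F_q^m$, counting elements yields $q^m = (q^k)^d$, hence $k \mid m$. Because the subfields of $\F_{q^m}$ are precisely the fields $\F_{q^k}$ with $k\mid m$, this shows that $I_L(\cC)$ is isomorphic to a subfield of $\F_{q^m}$, as claimed. The argument for $I_R(\cC)$ is entirely symmetric: right multiplication turns $\F_q^n$ (viewed as row vectors) into an $I_R(\cC)$-vector space, and the same counting gives $|I_R(\cC)|=q^\ell$ with $\ell\mid n$.

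The only delicate point, and the step I would double-check, is that the dimension argument really applies, i.e.\ that the action of $I_L(\cC)$ on $\F_q^m$ is well-defined as an $\F_{q^k}$-vector space action. This is not really a difficulty, however, because non-zero elements of a field embedded in $\F_q^{m\times m}$ are automatically invertible matrices, so the action is faithful and the vector space axioms are just the associativity and distributivity of matrix multiplication. Apart from this verification, the proof is essentially a one-line consequence of the classification of finite fields.
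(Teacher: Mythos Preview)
Your proof is correct and follows essentially the same approach as the paper: both arguments note that the scalar matrices force $\F_q \subseteq I_L(\cC)$, giving $|I_L(\cC)|=q^k$, and then turn $\F_q^m$ into a vector space over $I_L(\cC)$ to deduce $k\mid m$ by a cardinality count. The only cosmetic difference is that the paper uses the right action $\mathbf{x}\mapsto \mathbf{x}M$ on row vectors while you use the left action on column vectors.
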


\begin{proof}
Let us suppose that $\mathcal{M}:=I_L(\cC)$ is a matrix field. Since  any scalar matrix $\lambda I_m$, where $\lambda \in \F_q$ and $I_m$ the identity matrix of order $m$,  is contained in $\mathcal{M}$, we have that $|\mathcal{M}|=q^a$ for some $1 \leq a \leq  m$. Moreover, $\F_q^m$ is a (right) vector space over $\mathcal M$ with exterior product $ \mathbf x M$, $\mathbf x\in\F_q^m$ and $M\in\mathcal M$. Let  $b=\dim_{\mathcal{M}}\F^m_{q}$, then we get
$$q^m=\vert \F^m_{q}\vert=\vert \mathcal{M} \vert^{\dim_{\mathcal{M}}\F^m_{q}}=q^{ab}.$$
Hence, $a$ divides $m$ and this concludes the proof.
\end{proof}

Finally, the \textit{weight} of a codeword $C \in  \cC$ is defined as the rank of $C$. In \cite{Delsarte},
Delsarte  (and later Gabidulin in \cite{Gabidulin}) determined the weight
distribution of an MRD code. 
Before stating this result, we recall the following definition. Let $n,k$ be  non-negative integers with $k \leq n $. The \textit{Gaussian binomial
coefficient} of $n$ and $k$ is defined as follows:
\begin{equation*}
\qbin{n}{k}_q=
\begin{cases} 
\frac{(q^n-1)\cdots(q^{n-k+1}-1)}{(q^k-1)\cdots(q-1)}  \hspace{1.3cm}\textnormal{ if $k>0$} \\
\hspace{1.5cm} 1 \hspace{2.5cm}\textnormal{otherwise.} 
\end{cases}
\end{equation*}

Let us denote by $A_i(\cC)$ the number of codewords of weight $i$ of an RM-code $\cC$.

\begin{thm}\label{weightdistribution}\cite[Theorem 5.6]{Delsarte}
Let $\cC \subseteq \F_{q}^{m \times n}$ be an MRD code with minimum distance $d$ and
suppose $m \leq n$. Then,
\begin{equation}
A_{d+\ell}(\cC) = 
\qbin{m}{d + \ell}_q
\sum_{j=0}^\ell (-1)^{j-\ell} \qbin{\ell + d}{\ell- j}_q q^{\binom{\ell-j}{2}}(q^{n(j+1)} -1), \qquad \ell \in \{0, 1,\ldots, m -d\}.
\end{equation}
In particular, $A_{d+\ell}(\cC) >0$ for any $\ell \in \{0, 1,\ldots, m -d\}$ and the number of codewords with minimum
weight  is
\begin{equation}
A_d(\cC) = \qbin{m}{d}_q (q^n-1)= (q^n-1) \frac{(q^m-1)(q^{m-1}-1) \cdots (q^{m-d+1}-1)}{(q^d-1)(q^{d-1}-1) \cdots (q-1)}.
\end{equation}
\end{thm}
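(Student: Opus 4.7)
My plan is to count the codewords of $\cC$ according to their column space and then extract the rank distribution by Möbius inversion on the subspace lattice of $\F_q^m$. For a subspace $U\le\F_q^m$ of dimension $u$, set
\[
M_u:=\bigl|\{\,C\in\cC:\text{col}(C)\subseteq U\,\}\bigr|,\qquad N_u:=\bigl|\{\,C\in\cC:\text{col}(C)=U\,\}\bigr|,
\]
where $\text{col}(C)$ is the column space of $C$; both will turn out to depend only on $u$. Since a codeword of rank $d+\ell$ has column space of dimension exactly $d+\ell$, one has $A_{d+\ell}(\cC)=\qbin{m}{d+\ell}_q N_{d+\ell}$, which already accounts for the outer factor $\qbin{m}{d+\ell}_q$ in the target formula.

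The key computation is $M_u=q^{n\max(0,\,u-d+1)}$ for every $U$ of dimension $u$. For $u\le d-1$ this is immediate, since any nonzero codeword has rank $\ge d>u$, forcing $M_u=1$. For $u\ge d$ I sandwich $M_u$ between matching bounds. The $\F_q$-subspace $S_U:=\{C\in\F_q^{m\times n}:\text{col}(C)\subseteq U\}$ has dimension $un$, so Grassmann's formula yields
\[
\dim_{\F_q}(\cC\cap S_U)\ge \dim_{\F_q}\cC+\dim_{\F_q}S_U-mn=n(m-d+1)+un-mn=n(u-d+1).
\]
Conversely, any basis of $U$ identifies $S_U$ with $\F_q^{u\times n}$ in a rank-preserving way, so $\cC\cap S_U$ is an $\F_q$-linear rank-metric code in $\F_q^{u\times n}$ with minimum distance $\ge d$; because $u\le m\le n$, the Singleton-like bound \eqref{singleton} gives $|\cC\cap S_U|\le q^{n(u-d+1)}$. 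The two estimates coincide, proving the claim (and showing as a by-product that the restricted code is itself MRD).

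With the values of $M_u$ in hand, the incidence relation $M_u=\sum_{v=0}^u\qbin{u}{v}_q N_v$ is inverted on the $q$-analog of the Boolean lattice into
\[
N_{d+\ell}=\sum_{v=0}^{d+\ell}(-1)^{d+\ell-v}q^{\binom{d+\ell-v}{2}}\qbin{d+\ell}{v}_q M_v.
\]
I split this at $v\le d-1$ (where $M_v=1$) versus $v\ge d$ (where $M_v=q^{n(v-d+1)}$), reindex the upper piece by $j=v-d$, and use $\qbin{d+\ell}{d+j}_q=\qbin{d+\ell}{\ell-j}_q$ to recast it as $\sum_{j=0}^\ell(-1)^{\ell-j}q^{\binom{\ell-j}{2}}\qbin{d+\ell}{\ell-j}_q q^{n(j+1)}$. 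The lower piece, after reindexing $k=d+\ell-v$, becomes $\sum_{k=\ell+1}^{d+\ell}(-1)^k q^{\binom{k}{2}}\qbin{d+\ell}{k}_q$; by the $q$-binomial identity
\[
\sum_{k=0}^{d+\ell}(-1)^k q^{\binom{k}{2}}\qbin{d+\ell}{k}_q=0,
\]
the specialization at $x=1$ of $\prod_{i=0}^{d+\ell-1}(1-xq^i)=\sum_k(-1)^k q^{\binom{k}{2}}\qbin{d+\ell}{k}_q x^k$, this lower piece equals $-\sum_{j=0}^\ell(-1)^{\ell-j}q^{\binom{\ell-j}{2}}\qbin{d+\ell}{\ell-j}_q$. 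Adding the two gives the advertised closed form, and the case $\ell=0$ collapses directly to $N_d=q^n-1$, hence $A_d(\cC)=\qbin{m}{d}_q(q^n-1)$.

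The main obstacle is the equality in the key lemma for $u\ge d$: the Grassmann lower bound is purely formal, whereas the matching Singleton upper bound is sharp only because $u\le m\le n$ forces $\min\{u,n\}=u$ when \eqref{singleton} is applied to the restricted code in $\F_q^{u\times n}$. The hypothesis $m\le n$ enters precisely at this step; everything else is bookkeeping with Gaussian binomials. Positivity $A_{d+\ell}(\cC)>0$ for $\ell\in\{0,1,\ldots,m-d\}$ then follows by noting that the top-order contribution $q^{n(\ell+1)}$ dominates the alternating sum once $\ell\le m-d\le n-d$.
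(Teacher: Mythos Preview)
The paper does not prove this theorem; it is quoted from Delsarte and used as a black box, so there is no in-paper argument to compare against.

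Your proof is correct for $\F_q$-\emph{linear} MRD codes, but the theorem as stated imposes no linearity hypothesis: in the paper's conventions a rank-metric code is merely a subset of $\F_q^{m\times n}$, and ``MRD'' means only that the Singleton-like bound \eqref{singleton} is attained. Your key lemma hinges on the Grassmann inequality $\dim_{\F_q}(\cC\cap S_U)\ge\dim_{\F_q}\cC+\dim_{\F_q}S_U-mn$, which is meaningless when $\cC$ is not a subspace, and there is no evident replacement. Delsarte's original argument avoids this by working in the association scheme of bilinear forms and using the $q$-analogue of the MacWilliams transform, which forces the inner distribution of any code meeting the Singleton bound---linear or not. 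Your restriction-to-$S_U$ shortcut is a well-known and clean proof in the linear case; since every code $\cC_f$ that the survey actually uses is $\F_q$-linear, it suffices for the applications here, but you should state the extra hypothesis explicitly.

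A minor point: the closing positivity claim (``the top-order contribution $q^{n(\ell+1)}$ dominates'') is correct in spirit but needs a line of justification; one route is to bound $\sum_{j<\ell}q^{\binom{\ell-j}{2}}\qbin{d+\ell}{\ell-j}_q q^{n(j+1)}$ using $d+\ell\le m\le n$, or simply to note that $N_{d+\ell}>0$ follows once you know the restricted code $\cC\cap S_U$ with $\dim U=d+\ell$ is itself MRD of minimum distance $d$ and hence contains a codeword of every rank in $\{d,\ldots,d+\ell\}$, by induction on $\ell$.
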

\subsection{Rank-metric codes as subsets of linearized polynomials} 
As said before, any linearized polynomial $f \in \mathscr{L}_{n,q}[X]$ induces an $\F_q$-linear endomorphism of $\F_{q^n}$ and it is well known that there is a one-to-one correspondence  between an element of $\mathrm{End}_{\F_q}(\F_{q^n})$	and a linearized polynomial with $q$-degree at most $n-1$, see \cite{Lidl_Niederreiter}.\\
Throughout this article, we will denote by $\mathscr{\tilde{L}}_{n,q}[X]$ the set of linearized polynomials with coefficients over $\F_{q^n}$ and $q$-degree at most $n-1$. The algebraic structure $(\mathscr{\tilde{L}}_{n,q}[X],+,\circ,\cdot)$, where $+$ is the addition of polynomials, $\circ$ is the composition of polynomials  modulo $X^{q^n}-X$  and $\cdot$ is the scalar multiplication by an element of $\F_q$, is isomorphic to the $\F_q$-algebra $\mathrm{End}_{\F_q}(\F_{q^n})$. 
%and, by fixing an $\F_q$-basis of $\F_{q^n}$, to the algebra of the matrices $\F_q^{n \times n}$, as well.\\
Let $f \in \mathscr{\tilde{L}}_{n,q}[X]$, the set of the roots  and  the values set of $f$  are both $\F_q$-subspaces of $\F_{q^n}$. These are called  the \textit{kernel} and the \textit{image} of $f$ and they are denoted by $\ker f$ and $\mathrm{im} f$, respectively.
We will define  \textit{rank} of $f$, $\rk f$ in symbols, as
the dimension over $\F_{q}$ of $\mathrm{im} f$. Clearly, if $\deg_q(f) = k$, the rank of $f$ is at least $n-k$. This follows from the fact that such a polynomial may have at most $q^k$ roots, and hence its kernel has dimension at most $k$, implying that $\mathrm{im} f$ has at least $q^{n - k}$ elements.
This turns out to be a special case of the following more general result.
\begin{thm}\cite[Theorem 5]{gow-quinlan_2} \label{qspolynomial} Let $\mathbb{L}$ be a cyclic Galois extension of a field $\F$ of degree $n$, and suppose that $\sigma$ generates the group  $\mathrm{Gal}(\mathbb{L}\vert  \F)$. Let $k$ be an integer satisfying $1 \leq k < n$, and let $c_0, c_1,\ldots, c_{k-1}$ be elements of
	$\mathbb{L}$, not all equal to zero. Then the polynomial 
	\begin{equation*}
	f = c_0X + c_1X^\sigma + \ldots + c_{k-1}X^{\sigma^{k}}
	\end{equation*}
	has rank at least $n - k$.
\end{thm}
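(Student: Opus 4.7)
The map $x\mapsto f(x)$ is $\F$-linear since $\sigma$ fixes $\F$, so $\ker f$ is an $\F$-subspace of $\mathbb{L}$; by the rank-nullity theorem the conclusion $\rk f\ge n-k$ is equivalent to $\dim_{\F}\ker f\le k$. My plan is to prove this bound by contradiction, playing the size of $\ker f$ against a Moore-type determinantal identity.

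The key auxiliary ingredient is the following generalization of the classical Moore determinant: if $\alpha_1,\dots,\alpha_m\in\mathbb{L}$ are $\F$-linearly independent, then the matrix $\bigl(\alpha_j^{\sigma^{i-1}}\bigr)_{1\le i,j\le m}$ is invertible over $\mathbb{L}$. Equivalently, the automorphisms $\sigma^0,\sigma^1,\dots,\sigma^{m-1}$ remain $\mathbb{L}$-linearly independent when restricted to any $m$-dimensional $\F$-subspace of $\mathbb{L}$. Over a finite field with $\sigma$ the Frobenius this is the standard Moore determinant, provable via the polynomial-degree bound on the number of roots of a $q$-polynomial. Over an arbitrary cyclic Galois extension it follows from Dedekind's/Artin's theorem on the $\mathbb{L}$-linear independence of distinct field automorphisms; I would either quote this or prove it by induction on $m$.

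Granting the lemma, suppose for contradiction that $\alpha_1,\dots,\alpha_{k+1}\in\ker f$ are $\F$-linearly independent, and form the $(k+1)\times(k+1)$ matrix $N=(\alpha_j^{\sigma^i})_{1\le j\le k+1,\,0\le i\le k}$. The lemma with $m=k+1$ yields $\det N\ne 0$. On the other hand, the identities $f(\alpha_j)=0$ for $j=1,\dots,k+1$ say that a non-trivial $\mathbb{L}$-linear combination of (at most) the first $k$ columns of $N$, with coefficients $c_0,c_1,\dots,c_{k-1}$ not all zero, is the zero vector. Those $k$ columns therefore span a space of $\mathbb{L}$-dimension at most $k-1$, so all $k+1$ columns span a space of dimension at most $k<k+1$, forcing $\det N=0$. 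This contradiction establishes $\dim_{\F}\ker f\le k$, as required.

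The only non-routine step is the Moore-type determinant lemma in the generality of an arbitrary cyclic Galois extension: the finite-field shortcut using ordinary polynomial root counts is unavailable over an infinite base field, so one must genuinely appeal to the independence of characters. I expect the bulk of the technical work to sit there; everything else is a clean linear-algebra reduction.
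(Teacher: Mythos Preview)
The paper does not give its own proof of this theorem; the statement is quoted from \cite{gow-quinlan_2} and used as a black box, so there is no in-paper proof to compare your attempt against.

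That said, your approach is correct and standard: bound $\dim_\F\ker f$ by contradiction, using that a $(k+1)\times(k+1)$ Moore-type matrix built from $k+1$ hypothetically $\F$-independent kernel elements is simultaneously nonsingular (by the lemma) and singular (because the vanishing of $f$ on each $\alpha_j$ gives a nontrivial $\mathbb{L}$-linear relation among its columns). Your identification of the Artin/Dedekind independence-of-characters input as the one ingredient that genuinely replaces finite-field root-counting is on point, and the inductive proof of the Moore-type lemma you allude to does go through without circularity (normalize a dependency so that $d_m=1$, apply $\sigma$, subtract, and invoke the inductive hypothesis to force all $d_j\in\F$, contradicting independence of the $\alpha_j$).

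One cosmetic remark: the printed statement carries an index mismatch (last coefficient written $c_{k-1}$, last exponent $\sigma^{k}$). Read consistently with the conclusion $\rk f\ge n-k$, the $\sigma$-degree is $k$ and the coefficients run $c_0,\dots,c_k$, so the relation $f(\alpha_j)=0$ actually involves all $k+1$ columns of $N$ rather than only the first $k$. This does not affect your argument, since in either reading the columns of $N$ are $\mathbb{L}$-linearly dependent and $\det N=0$.
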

Taking $\mathbb{L} = \F_{q^n}$, $\F = \F_q$, and $ \sigma : x \in \F_{q^n} \longmapsto x^q \in \F_{q^n}$ returns the above statement about linearized polynomials, while if $\sigma : x \in \F_{q^n} \longmapsto x^{q^s} \in \F_{q^n}$
for some $s \in \{1,\ldots,n-1\} $ such that $\gcd(s,n)=1$, then the polynomial $f$ becomes what is known as a $\sigma$-\textit{linearized polynomial}, or a $q^s$-\textit{polynomial}.
As before, the set $\tilde{\mathscr{L}}_{n,q,s}[X]$ of $\sigma$-linearized polynomials can also be structured as an $\F_q$-algebra isomorphic to $\mathrm{End}_{\F_q}(\F_{q^n})$.\\
Given a $q$-polynomial $f=\sum_{i=0}^{n-1} c_iX^{q^i} \in \tilde{\mathscr{L}}_{n,q}[X]$, the associated \textit{Dickson matrix} (or $q$-\textit{circulant matrix}) $D_f$ is defined as
\begin{equation}
D_f=
\begin{pmatrix}
    c_0 & c_1 & \ldots & c_{n-1} \\
    c_{n-1}^q & c_0^q & \ldots  & c_{n-1}\\ 
    \vdots & \vdots & \vdots & \vdots \\
    c_{1}^{q^{n-1}} & c_{2}^{q^{n-1}} & \ldots & c_{0}^{q^{n-1}}
\end{pmatrix}.
\end{equation}
The rank of the matrix $D_f$ is equal to the rank of $f$, see \cite[Proposition 4.4]{revisited}. It is straightforward to see that $\tilde{\mathscr{L}}_{n,q}[X]$ is isomorphic to the $\F_q$-algebra of Dickson matrices $\mathscr{D}_{n,q}$ with entries over $\F_{q^n}$. Let us fix  $\mathscr{B}=\{\beta_i\}^{n-1}_{i=0}$ an $\F_q$-basis of $\F_{q^n}$, the map
$\Psi_\mathscr{B}: f  \longmapsto B^{-1} D_f B$,
 where 
 \begin{equation}\label{B}
B=\begin{pmatrix}
\beta_0 & \beta_1 & \ldots & \beta_{n-1}\\
\beta^q_0 & \beta^q_1 & \ldots & \beta^{q}_{n-1}  \\
\vdots & \vdots  & \vdots & \vdots \\
\beta_0^{q^{n-1}} & \beta_1^{q^{n-1}} & \ldots & \beta_{n-1}^{q^{n-1}}
\end{pmatrix},
\end{equation}
 is an isomorphism from $\tilde{\mathscr{L}}_{n,q}[X]$ to $\F_{q}^{n \times n}$, see \cite[Lemma 4.1]{revisited}.

In light of the above, the rank distance introduced in \eqref{rankdistance} can be read in $\mathscr{\tilde{L}}_{n,q}[X]$  as
$$d_{\rk}(f_1,f_2)=\rk(f_1-f_2)$$
with $f_1,f_2 \in \mathscr{\tilde{L}}_{n,q}[X]$. After fixing an $\F_q$-basis of $\F_{q^n}$, any rank-metric code $\cC \subseteq \F_q^{n \times n}$ can be seen  as a subset of
linearized polynomials of $\tilde{\mathscr{L}}_{n,q}[X]$.
Although also codes whose codewords are rectangular matrices can be seen in a linearized polynomial setting (\cite[Section 1.2]{Longobardi_Thesis} and \cite{Polverino_Zullo}) for the purposes of this survey,  we will deal only with square rank-metric codes.
We can reformulate the notions recalled in Subsection \ref{RM-codes} in terms of $ q $-polynomials with coefficients over $\F_ {q ^ n} $.
An $\F_q$-linear rank-metric code $\mathcal{C}$ is an $\F_q$-subspace of the vector space $\tilde{\mathscr{L}}_{n,q}[X]$ equipped with the above mentioned rank metric and its minimum distance is defined as
\begin{equation}
d_{\rk}(\mathcal{C}) = \min_{\substack{f \in \mathcal{C}^*}} \rk f. 
\end{equation}
Let us denote by $\mathscr{B}^*=\{\beta_i^*\}_{i=0}^{n-1}$ the dual basis of $\mathscr{B}$, see \cite[Definition 2.30]{Lidl_Niederreiter}. The transpose operation $\cdot^t$ in $\F_{q}^{n \times n}$ can be read  as the map $\widehat{\cdot}:\tilde{\mathscr{L}}_{n,q}[X] \longrightarrow \tilde{\mathscr{L}}_{n,q}[X]$ which makes commutative the following diagram
\[
 \begin{CD} \tilde{\mathscr{L}}_{n,q}[X] @>\widehat{\cdot}>> \tilde{\mathscr{L}}_{n,q}[X]  \\ @VV\large{\Psi_\mathscr{B}}V @VV\Psi_{\mathscr{B}^*}V\\ {\F_q}^{n \times n}  @> \cdot^t>> \F_{q}^{n \times n} \end{CD}
 \]
Then, if $f=\sum_{i=0}^{n-1} c_i X^{q^i} \in   \tilde{\mathscr{L}}_{n,q}[X]$, the \textit{adjoint polynomial} of $f$ is defined as
\begin{equation*}
    \hat{f}=\sum_{i=0}^{n-1}c^{q^{n-i}}_iX^{q^{n-i}}
\end{equation*} and it corresponds in $\tilde{\mathscr{L}}_{n,q}[X]$ to the transpose of the matrix $B^{-1}D_fB$  (cf. \eqref{B}). Let $\mathcal{C} \subseteq \tilde{\mathscr{L}}_{n,q}[X] $ be a rank-metric code, then the adjoint code of $\cC$ corresponds to the set $ \widehat{\cC}=\{ \hat{f} : f \in \mathcal{C}\}$. Moreover, it is routine to verify that $\hat{\hat{f}}=f$.\\
Finally, two $\F_q$-linear rank-metric codes $\mathcal{C}$ and $\mathcal{C}^\prime$ are equivalent or adjointly equivalent if  there
exists $(\alpha,\rho,\beta)$ such that $\alpha,\beta \in \tilde{\mathscr{L}}_{n,q}[X]$ with $\rk \alpha = \rk \beta =n$, and $\rho \in \Aut(\mathbb{F}_q)$ such that
\begin{center}
$\mathcal{C}^\prime = \{  \alpha \circ f^\rho \circ \beta \colon  f \in \mathcal{C}\} $ \,\,or\,\, $\mathcal{C}^\prime = \{\alpha \circ f^\rho \circ \beta \colon  f\in \widehat{\mathcal{C}}\},$
\end{center}
respectively, where the automorphism $\rho$ acts only over the coefficients of a polynomial $f$ in $\cC$ or $\widehat{\cC}$, respectively.
If $\cC=\cC'$, the set of the triples defined as above  is a group which is called
the \textit{automorphism group} of $\cC$ and it is denoted by $\Aut(\cC)$.
Also the notion of left and right idealiser can be rephrased in this setting as follows  
\[I_L(\cC) =\left \{\varphi\in \tilde{\mathscr{L}}_{n,q}[X]: \varphi\circ f\in \cC \text{ for all }f\in \cC \right \} \]
and
\[I_R(\cC) = \left \{\varphi\in \tilde{\mathscr{L}}_{n,q}[X]: f\circ \varphi \in \cC \text{ for all }f\in \cC \right \}, \]
respectively. By \cite[Corollary 5.6]{Lunardon_Trombetti_Zhou} and Theorem \ref{idealizers}, if $\cC$ is an MRD code both the idealisers are isomorphic to subfields of $\F_{q^n}$.

\subsection{Linear sets of a finite projective space} \label{linearsets}

Let $V=V(r,q^n)$ be a vector space over $\F_{q^n}$ of dimension $r$ and let $\Lambda=\PG(V,q^n)=\PG(r-1,q^n)$ be the projective space associated with it. A set of points $L_U$ of $\Lambda$  is called an $\F_q$-\emph{linear set} of \emph{rank} $\nu$ if it consists of the points defined by the non-zero vectors of a $\nu$-dimensional $\F_q$-subspace $U$ of $V$, in symbols
\[L_U=\left\{\langle \mathbf{u} \rangle_{\Fqn} :   \mathbf{u}\in U^*\right\}. \]
The term `linear' was first introduced by Lunardon \cite{lunardon_normal_1999} who considered a particular class of blocking sets. In the past two decades, in addition to blocking sets, linear sets have been intensively investigated and applied to construct and characterize various objects in finite geometry such as two-intersection sets, translation spreads of the Cayley generalized Hexagon, translation ovoids of polar spaces, semifields and rank-metric codes. The reader may refer to the surveys 
\cite{Lavrauw_Polverino,LavrauwVanderVoorde, polverino_linear_2010, Polverino_Zullo,Sheekey2019}
and the references therein.\\
An $\F_q$-linear set $L_U$ could be represented by different $\F_q$-subspaces of $V$. Indeed, it is clear that $L_{\lambda U}=L_{U}$ for any $\lambda \in \F_{q^n}^*$, where $\lambda U=\{\lambda \mathbf{u}: \mathbf{u} \in U\}$.\\ Examples of linear sets are the subgeometries of a finite projective space, see \cite{Hirschfeld}. More precisely, a \textit{(canonical) subgeometry}  $\Sigma$ of $\Lambda$ is a linear set  with rank $r$ and  such that $\langle \Sigma \rangle =\Lambda$. %It follows that a linear set is a (canonical) subgeometry  of $\Lambda$ if and only if
%any of its frame is also a frame of $\Lambda$.\\
Two linear sets $L_U$ and
$L_W$ of $\Lambda = \PG(r - 1, q^n)$ are $\PGaL$-\textit{equivalent} 
(or simply \textit{equivalent}) if there exists  a collineation $\kappa \in  \PGaL(r, q^n)$ such that $L^{\kappa}_U=\kappa(L_U)=L_W$. Clearly, if $U$ and $W$ are $\Fq$-subspaces of $V$ in
the same $\Gamma \mathrm{L}(r, q^n)$-orbit, then $L_U$ and $L_W$ are equivalent. Indeed, if $\gamma  \in \GaL(r,q^n)$ such that $U^\gamma= W$ then, the collineation $\kappa_\gamma$
induced by $\gamma$ is a $\PGaL$-equivalence that maps $L_U$ into $L_W$.
However, this is not a necessary condition. Indeed, a linear set $L_U$ could be represented by different $\F_q$-subspaces which are not in the same $\GaL(r, q^n)$-orbit. If this is not the case, a linear set $L_U$ of rank $\nu$ is a said to be \textit{simple}. Hence, $L_U$ is simple if for each $\F_q$-subspace $W$ of $V$
such that $\dim_{\F_q} W=\nu$ and $L_U = L_W$, the subspaces $U$ and $W$ are in the same $\GaL(r, q^n)$-orbit.\\
 A collineation $ \kappa \in \PGaL(r,q^n)$ such that $L^\kappa=L_U=L$ is called an \emph{automorphism} of $L$ and the set of all such maps is called the \emph{automorphism group} of $L$ and it is denoted by $\Aut(L)$. This is a subgroup of $\PGaL(r,q^n)$.\\
The most studied linear sets are those that satisfy being maximal in size. Clearly, any $\Fq$-linear set in $\Lambda=\PG(r-1, q^n)$ of rank greater than $n(r-1)$ coincides with $\Lambda$. Moreover, it is straightforward to see that
\begin{equation*}
    |L_U|\leq \frac{q^{\nu}-1}{q-1}=q^{\nu-1}+q^{\nu-2}+\ldots+q+1.
\end{equation*} When the equality is achieved, $L_U$ and the underlying vector space $U$ are called \textit{scattered}. A scattered linear set $L_U$ of $\Lambda$ with largest possible rank  is called  \emph{maximum scattered linear set}.  In \cite{blokhuis_scattered_2000}, Blokhuis and Lavrauw  proved  a bound on  the largest possible rank for a scattered linear set.

\begin{thm} \label{boundscattered} \cite[Theorems 2.1 and 4.3]{blokhuis_scattered_2000} Let  $L_U$ be a maximum scattered
$\Fq$-linear set of rank $\nu$ in $\Lambda = \PG(r -1, q^n)$, then if $r$ is even
\[\nu =\frac{rn}{2},\]
otherwise
\[\frac{n(r-1)}{2} \leq  \nu \leq \frac{rn}{2}.\]
\end{thm}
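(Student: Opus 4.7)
The plan is to establish the upper bound $\nu \leq rn/2$ uniformly for both parities of $r$ by a dimension-count argument, and then realise the claimed lower bounds by exhibiting explicit scattered subspaces.

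\emph{Upper bound.} I would argue by contradiction: suppose $\nu > rn/2$ and fix any $\alpha \in \Fqn \setminus \Fq$. Viewing $V$ as an $rn$-dimensional $\Fq$-vector space, both $U$ and $\alpha U := \{\alpha \mathbf{u} : \mathbf{u} \in U\}$ are $\nu$-dimensional $\Fq$-subspaces of $V$, so Grassmann's identity yields
\[
\dim_{\Fq}(U \cap \alpha U) = 2\nu - \dim_{\Fq}(U + \alpha U) \geq 2\nu - rn > 0.
\]
Therefore there exist $\mathbf{u}, \mathbf{v} \in U^*$ with $\mathbf{u} = \alpha \mathbf{v}$. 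These two vectors are $\Fqn$-linearly dependent, so the scattered hypothesis forces them to be $\Fq$-linearly dependent, which in turn forces $\alpha \in \Fq$, a contradiction. This gives $\nu \leq rn/2$ in both parities.

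\emph{Lower bound by construction.} The building block is the case $r = 2$: the $\Fq$-subspace
\[
U_0 := \{(x, x^q) : x \in \Fqn\} \subseteq V(2, q^n)
\]
has $\Fq$-dimension $n$ and is scattered, since $(x, x^q)$ and $(y, y^q)$ being $\Fqn$-dependent means $y = \lambda x$ and $y^q = \lambda x^q$ for some $\lambda \in \Fqn^*$, whence $\lambda^q = \lambda$ and $\lambda \in \Fq$. For $r$ even, I would fix an $\Fqn$-basis of $V$, group the basis vectors into $r/2$ pairs, and take the direct sum of a copy of $U_0$ in each $2$-dimensional coordinate block; this yields a scattered $\Fq$-subspace of rank $rn/2$, matching the upper bound. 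For $r$ odd, the same construction with $(r-1)/2$ pairs (leaving one coordinate unused) produces a scattered subspace of rank $(r-1)n/2$, giving the claimed lower bound.

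\emph{Main obstacle.} The delicate point is checking that the direct sum $U_1 \oplus \cdots \oplus U_{r/2}$ of scattered blocks is itself scattered. Given $\mathbf{w} = \mathbf{u}_1 + \cdots + \mathbf{u}_{r/2}$ in the direct sum, asking when $\lambda \mathbf{w}$ lies in $U_1 \oplus \cdots \oplus U_{r/2}$ forces $\lambda \mathbf{u}_i \in U_i$ for every non-zero component $\mathbf{u}_i$ (by independence of the coordinate subspaces), and scatteredness of each $U_i$ then pins down $\lambda \in \Fq$. This block-by-block reduction is routine, but must be handled with care when $\mathbf{w}$ has support in more than one block, and that is where the Frobenius identification $\lambda^q = \lambda \Longleftrightarrow \lambda \in \Fq$ does the essential work.
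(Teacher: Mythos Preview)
The paper does not supply its own proof of this statement; it is quoted from Blokhuis--Lavrauw as background and then used without further argument. So there is nothing in the paper to compare against directly.

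That said, your argument is correct on both halves. The upper-bound step---intersecting $U$ with a dilate $\alpha U$ for some $\alpha\in\Fqn\setminus\Fq$ and invoking Grassmann to force a nonzero common vector---is valid and is in fact a well-known streamlining of the original Blokhuis--Lavrauw proof (which proceeds by a more elaborate argument). Your direct-sum constructions for the lower bounds are the standard ones (the paper alludes to exactly this mechanism when it cites \cite[Section~3]{BaGiuMaPo}), and the verification you flag as the ``main obstacle'' is sound: independence of the $\Fqn$-coordinate blocks forces any $\lambda\in\Fqn$ with $\lambda\mathbf{w}\in U$ to satisfy $\lambda\mathbf{u}_i\in U_i$ for each nonzero component, and the scattered property of each $U_i$ then pins $\lambda$ to $\Fq$.
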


Actually, in \cite{blokhuis_scattered_2000} the results above are stated in terms of subspaces and spreads of a given vector space. We will briefly recall these notions. Let $ V (m, q)$ denote an $m$-dimensional $\F_q$-vector space. A \textit{partial} $t$-\textit{spread} of $V(m,q)$ is a set $\mathcal{S}$ of $t$-dimensional $\F_q$-subspaces meeting pairwise in the null space of $V$. If every non-zero vector of $V(m,q)$
is contained in exactly one element of $\mathcal{S}$, this is simply called a $t$-\textit{spread} of $V(m,q)$. In \cite{Segre}, Segre showed that a $t$-spread of $V(m,q)$ exists if and only if $t$ divides $m$.
If we consider $V=V(r,q^n)$, this can be seen  as an $rn$-dimensional  vector space over $\Fq$ and it is well-known that the one-dimensional $\Fqn$-subspaces of $V$, viewed as $n$-dimensional $\Fq$-subspaces form an $n$-spread of $V$.
This spread $\mathcal{D}$ is called the \textit{Desarguesian spread}.
Hence, an $\F_q$-linear set $L_U \subseteq \PG(r-1,q^n)$ is maximum scattered if and only if the underlying $\F_q$-vector space $U$ is maximum scattered with respect to $\mathcal{D}$. This means that $U$ meets every
element of $\mathcal{D}$ in an $\Fq$-subspace of dimension at most one:
\begin{equation*}
    \dim_{\F_q}(U \cap \langle \textbf{v} \rangle _{\Fqn}) \leq 1, \quad \quad \forall \, \textbf{v} \in V
\end{equation*}  
and the dimension of $U$ is maximal with respect to the property of being scattered. In this paper, for a subspace to be scattered will always mean scattered with respect to the Desarguesian spread $\mathcal{D}$ of $V$. A generalization of the concept of scattered subspace can be found in \cite{evasive} and \cite{h-scattered}.\\

We conclude this section by giving the following result that characterizes the coordinates of the points belonging to linear sets of $\PG(r-1,q^n)$ of rank $n$.

\begin{proposition} \cite[Lemma 7]{mrdcodeslinearsets} $\&$ \cite[Lemma 2.2]{SheekeyVandeVoorde} \label{prop:rankn} Let $L=L_U$ be an $\Fq$-linear set of rank $n$ in $\Lambda=\PG(r - 1, q^n)$. Then there exist $f_0,f_1,\ldots,f_{r-1} \in \tilde{\mathscr{L}}_{n,q}[X]$ such that

\begin{equation}\label{rankn}
L = \left \{\langle (f_0(x),f_1(x),\ldots, f_{r-1}(x)) \rangle_{\Fqn} \colon x  \in  \Fqn^* \right \},
\end{equation}
with $\bigcap_{i=0}^{r-1} \mathrm{ker} f_i= \{\boldsymbol{0}\}$.
 If $\langle L \rangle  = \Lambda$, then the polynomials $f_i$  are
linearly independent over $\Fqn$.\\
Vice versa, if $f_i \in \tilde{\mathscr{L}}_{n,q}[X]$, $i \in \{0,\ldots,r-1\}$ with $\bigcap_{i=0}^{r-1} \mathrm{ker} f_i= \{\boldsymbol{0}\}$, then
the point set in \eqref{rankn} is an $\Fq$-linear set of rank $n$ in $\Lambda$.
\end{proposition}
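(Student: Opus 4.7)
The plan is to exploit the fact that an $\Fq$-linear set of rank $n$ in $\Lambda = \PG(r-1,q^n)$ is defined by an $\Fq$-subspace $U\subseteq V(r,q^n)$ of $\Fq$-dimension exactly $n$, which matches the $\Fq$-dimension of $\Fqn$ itself. This parity of dimensions is the whole engine of the statement: we can identify $U$ with $\Fqn$ via an $\Fq$-linear isomorphism and read off the $r$ coordinate functions as $\Fq$-linear endomorphisms of $\Fqn$, each of which (as recalled in Subsection 1.2) corresponds bijectively to an element of $\tilde{\mathscr{L}}_{n,q}[X]$.

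First I would fix such an isomorphism $\varphi: \Fqn \to U$ and set $f_i := \pi_i \circ \varphi$, where $\pi_i : V(r,q^n) \to \Fqn$ is the $i$-th coordinate projection. Each $f_i$ is $\Fq$-linear, hence there exists $f_i \in \tilde{\mathscr{L}}_{n,q}[X]$ representing it. By construction $U=\{(f_0(x),\ldots,f_{r-1}(x)) : x\in\Fqn\}$, which yields the parametrization \eqref{rankn} for $L=L_U$. The injectivity of $\varphi$ translates directly into $\bigcap_{i=0}^{r-1}\ker f_i=\{\boldsymbol 0\}$, since $(f_0(x),\ldots,f_{r-1}(x))=\boldsymbol 0$ says $\varphi(x)=\boldsymbol 0$.

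For the spanning condition, I would argue by contrapositive. If the $f_i$ were $\Fqn$-linearly dependent, say $\sum_i \lambda_i f_i \equiv 0$ with $(\lambda_0,\ldots,\lambda_{r-1})\neq \boldsymbol 0$, then every vector of $U$ would satisfy $\sum_i \lambda_i u_i=0$, so $L_U$ would lie in the hyperplane $H:\sum_i \lambda_i X_i=0$, contradicting $\langle L\rangle = \Lambda$. Hence $\langle L\rangle = \Lambda$ forces the $f_i$ to be $\Fqn$-linearly independent.

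For the converse, given $f_0,\ldots,f_{r-1}\in\tilde{\mathscr{L}}_{n,q}[X]$ with $\bigcap_i \ker f_i=\{\boldsymbol 0\}$, I would set
\[
U := \{(f_0(x),f_1(x),\ldots,f_{r-1}(x)) : x\in\Fqn\}.
\]
The $\Fq$-linearity of each $f_i$ makes $U$ an $\Fq$-subspace of $V$, and the map $x \mapsto (f_0(x),\ldots,f_{r-1}(x))$ has trivial kernel by assumption, hence is an $\Fq$-isomorphism between $\Fqn$ and $U$. Therefore $\dim_{\Fq} U = n$, and the point set in \eqref{rankn} is exactly $L_U$, an $\Fq$-linear set of rank $n$. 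None of the steps looks technically hard: the main point requiring care is simply keeping the scalars straight—$\Fq$-linearity governs membership in $\tilde{\mathscr{L}}_{n,q}[X]$, whereas $\Fqn$-linear (in)dependence of the $f_i$'s controls the projective span of $L$.
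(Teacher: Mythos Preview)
Your proposal is correct and follows essentially the same approach as the paper: choose an $\Fq$-isomorphism between $\Fqn$ and $U$, read off the coordinate maps as elements of $\tilde{\mathscr{L}}_{n,q}[X]$, translate injectivity into $\bigcap_i\ker f_i=\{\boldsymbol 0\}$, and observe that $\Fqn$-linear dependence of the $f_i$ would force $L$ into a hyperplane. If anything, your write-up is slightly tidier in setting the isomorphism in the direction $\Fqn\to U$ and naming the projections explicitly.
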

\begin{proof}
Since $L_U$ is an $\Fq$-linear set of rank $n$ in $\Lambda$, the $\Fq$-subspace $U$ is isomorphic to $\Fqn$ as an $\Fq$-vector space. Then, let $\Phi : U \longrightarrow \Fqn$ be an isomorphism and  consider

 $$\left \{\langle \Phi(x) \rangle_{\Fqn} \colon x \in \Fqn^*\right\}.$$ Now, $\Phi(x)$ belongs to $\F_{q^n}^r$ and hence, it can be written as $(f_0(x),\ldots, f_{r-1}(x))$ where $f_i \in \tilde{\mathscr{L}}_{n,q}[X]$, $i=0,\ldots,r-1$.
If there is a non-zero element in the intersection of the kernels of $f_i$'s, then the rank of
$L$ is strictly less than $n$, a contradiction.
Finally,  if the maps $f_0,\ldots, f_{r-1}$ are linearly dependent over $\Fqn$,
then there exist $\alpha_i \in \Fqn$ , not all zero, such that 
$$\sum_{i=0}^{r-1}\alpha_i f_i(x) = 0$$
for all $x\in \F_{q^n}$, implying that
$L$ is contained in the hyperplane with equation $\alpha_0X_0+\alpha_1X_1+\ldots+\alpha_{r-1}X_{r-1}=0$, and hence, $\dim \langle L \rangle < r-1$.
Vice versa, if all $f_i$ are $q$-polynomials, then $\{ (f_0(x),\ldots, f_{r-1}(x)) \colon x \in \Fqn \}$ defines an
$\Fq$-subspace $U$ of rank $n$ if and only if $\ker f_0 \cap\ldots \cap \ker f_{r-1} = \{\boldsymbol{0}\}$, implying that $L=L_U$ is an $\Fq$-linear set of rank $n$.
\end{proof}

\section{Scattered \texorpdfstring{$\mathbb{F}_q$}{TEXT}-subspaces of \texorpdfstring{$\mathbb{F}_{q^n}^2$}{TEXT} and linearized polynomials}

From now on, we will focus on maximum scattered $\Fq$-subspaces of $\F_{q^n}^2$. These subspaces are also significant because, via the direct sum construction, they allow us to obtain new maximum scattered subspaces in higher dimensions, see \cite[Section 3]{BaGiuMaPo}. A linear set associated with a subspace of this type turns out to be a point set of  the finite projective line $\PG(1,q^n)$ and  by Theorem \ref{boundscattered}, it has rank equal to $n$. Since $\PGL(2,q^n)$ acts 3-transitively over the points of $\PG(1,q^n)$, any linear set of $\PG(1,q^n)$ of rank $n$ is equivalent to an $L_U$ such that the point $\langle (0, 1) \rangle_{\Fqn} \not \in L_U$. Therefore, by Proposition \ref{prop:rankn}, there exists a linearized
polynomial $f\in \tilde{\mathscr{L}}_{n,q}[X]$ such that 
\[ U=U_f :=\{(x,f(x)): x\in \F_{q^n}  \}, \]
and $L_U=L_{U_f}$. We will use $L_f$ to denote the linear set defined by $U_f$.

\begin{thm} \cite[Section 5]{Sheekey2016}
Let $f \in \tilde{\mathscr{L}}_{n,q}[X]$. Then, the following are equivalent:
\begin{enumerate}
\item [$i)$] $U_f$ is maximum scattered subspace of $\F_{q^n}^2$;
\item [$ii)$] $f$ is scattered;
\item [$iii)$] The set of $q$-polynomials
\begin{equation}\label{Cf}
\mathcal{C}_f=\langle X, f(X) \rangle_{\F_{q^n}}= \left \{ aX+bf(X) \colon a,b \in \F_{q^n} \right\} \subset \tilde{\mathscr{L}}_{n,q}[X]
\end{equation}
is a maximum rank distance code with parameters $(n,n,q;n-1)$.
\end{enumerate}
\end{thm}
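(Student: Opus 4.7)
The plan is to prove the cycle of implications by translating each condition into a statement about the fibers of the map $x\mapsto f(x)/x$ on $\F_{q^n}^*$, since this quotient is the common thread between the three formulations.

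For the equivalence of $i)$ and $ii)$, I would use the explicit form of the Desarguesian spread on $V=\F_{q^n}^2$. Recall that $U_f$ has $\F_q$-dimension $n$ (via the projection onto the first coordinate), and by Theorem \ref{boundscattered} scattered rank $n$ in $\PG(1,q^n)$ is the maximum, so ``scattered'' and ``maximum scattered'' coincide for $U_f$. The non-trivial content is therefore showing that $U_f$ meets every Desarguesian spread element in at most a $1$-dimensional $\F_q$-subspace iff $f$ satisfies the scattered property \eqref{scatt-property}. The spread element $\langle(0,1)\rangle_{\F_{q^n}}$ meets $U_f$ only in the origin, while for $c\in\F_{q^n}$ the element $\langle(1,c)\rangle_{\F_{q^n}}$ meets $U_f$ in the $\F_q$-subspace $\{(x,f(x)):f(x)=cx\}$. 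The $\F_q$-dimension of this intersection is at most one exactly when, for all non-zero $y,z$ with $f(y)/y=f(z)/z=c$, the elements $y,z$ are $\F_q$-proportional; this is precisely condition $ii)$.

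For $ii)\Leftrightarrow iii)$ I would first check the size: $|\mathcal{C}_f|=q^{2n}$, which matches the Singleton-like bound \eqref{singleton} with $m=n$ and $d=n-1$; one must only exclude $f(X)\in\F_{q^n}\cdot X$, which would be incompatible with ii) for $n\geq 2$ (all non-zero elements would be $\F_q$-proportional). So the MRD condition reduces to showing every non-zero codeword has rank at least $n-1$. A codeword $aX+bf(X)$ with $b=0$ has rank $n$, so the test concerns $b\neq 0$, where the rank equals that of $f(X)-\lambda X$ for $\lambda=-a/b\in\F_{q^n}$. The kernel of this map is $\{0\}\cup\{x\in\F_{q^n}^*:f(x)/x=\lambda\}$, and the polynomial has rank at least $n-1$ iff the kernel has $\F_q$-dimension at most one. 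This holds for every $\lambda$ iff every fiber of $x\mapsto f(x)/x$ on $\F_{q^n}^*$ lies inside a single $\F_q$-line through the origin, which is exactly condition $ii)$.

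I don't anticipate a genuinely hard obstacle: both equivalences reduce to the same bookkeeping on the fibers of $f(x)/x$, and in the MRD direction the rank-at-least-$n-1$ condition is equivalent to the $\F_q$-kernel being at most one-dimensional, which is straightforward. The only delicate point to mention explicitly is why the rank condition need only be checked for $b\neq 0$ (i.e. that the codewords $aX$ automatically have full rank $n$) and why the $\F_{q^n}$-linear independence of $X$ and $f(X)$ is harmless: if $f(X)\in\F_{q^n}\cdot X$ then condition $ii)$ forces $n=1$, a degenerate case which we may exclude, so $\dim_{\F_{q^n}}\mathcal{C}_f=2$ and the code attains the Singleton-like bound.
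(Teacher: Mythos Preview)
Your proposal is correct and follows essentially the same route as the paper: both reduce $i)\Leftrightarrow ii)$ to computing $U_f\cap\langle(1,c)\rangle_{\F_{q^n}}$ as the fiber $\{x:f(x)/x=c\}$, and both reduce $ii)\Leftrightarrow iii)$ to showing that $\dim_{\F_q}\ker(mX+f(X))\le 1$ for all $m$ is equivalent to the scattered condition, after disposing of the case $b=0$. Your handling of the degenerate case $f(X)\in\F_{q^n}\cdot X$ and of the spread element $\langle(0,1)\rangle_{\F_{q^n}}$ is slightly more explicit than the paper's, but the arguments are otherwise the same.
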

\begin{proof}
$ i) \implies ii)$.  Suppose that $f(x)/x=f(y)/y$ for some $x,y\in\F_{q^n}^*$. The vector  $(y,f(y))\in U_f \cap\langle(x,f(x))\rangle_{\F_{q^n}}$. Since $U_f$ is scattered, $\dim_{\F_q}(U_f\cap\langle(x,f(x))\rangle_{\F_{q^n}})\leq1$. Therefore $(y,f(y)) =\lambda(x,f(x))$ for some $\lambda \in \F_q$, giving that $x$ and $y$ are $\F_q$-linearly dependent. Hence, $f$ is a scattered polynomial.\\
$ii)\implies i)$. Let us suppose by contradiction that $U_f$ is not scattered, i.e., there exists some $x\in\F_{q^n}^*$ such that $\dim_{\F_q}(U_f\cap\langle(x, f(x))\rangle_{\F_{q^n}})>1$. Then, if $y\in\F_{q^n}^*$ satisfies $(y,f(y))\in U_f\cap\langle(x, f(x))\rangle_{\F_{q^n}}$, there exists $\lambda\in\F_{q^n}\setminus\F_q$ such that  $y= \lambda x$ with $f(x)/x=f(y)/y$. This is a contradiction to the assumption that $f$ is a scattered polynomial.\\
$ii) \implies iii)$. It is easy to observe that $|\cC_f|=q^{2n}$. In order to be a maximum rank distance code, by the Singleton-like bound (cf. \eqref{singleton}), $d_{\rk}(\cC_f)$ must be $n-1$. Then, it is enough to prove that $\dim_{\F_q}\ker(aX+bf(X)) \in \{0,1\}$ for every $(a,b) \in \F^2_{q^n} \setminus \{(0,0)\}$. Clearly, if $b=0$, then $\dim_{\F_q} \ker aX=0$ for any $a \in \F_{q^n}^*$. If $b\neq0$,  proving  that $\dim_{\F_q}\ker(aX+bf(X)) \in \{0,1\}$ for every $(a,b) \in \F_{q^n} \times \F_{q^n}^*$ is equivalent to showing that $\dim_{\F_q}\ker(mX +f(X)) \leq 1$ for every $m \in \F_{q^n}$.\\
Let $\overline{x}, \overline{y}\in \ker (mX+f(X))$. Then, $m\overline{x}+f(\overline{x})=m\overline{y}+f(\overline{y})=0$. This implies that $f(\overline{x})/\overline{x}=f(\overline{y})/\overline{y}$ and hence $\overline{y}=\lambda \overline{x}$ for some $\lambda\in\F_q^*$. This gives $\dim_{\F_q} \ker(mX+f(X))=1$ and  the result follows.\\
$iii)\implies ii)$. This follows directly by noting that $d_{\rk}(\cC_f)=n-1$ implies $\dim_{\F_q}\ker(mX+f(X))$ is either 0 or 1. Then, if $f(x)/x=f(y)/y=-m$  for some $m \in \F_{q^n}$ and $x,y \in \F_{q^n}^*$, we get $y \in \langle x \rangle_{\F_q}$. This concludes the proof.
\end{proof}

By using Proposition \ref{weightdistribution}, we have the weight distribution of an MRD code $\cC_f$ defined as in \eqref{Cf}.

\begin{prop}
    Let $f \in \tilde{\mathscr{L}}_{n,q}[X]$ be a scattered polynomial and consider the MRD code $\cC_f$. Then,
\begin{equation}
    A_0(\cC_f)=1, \quad \quad A_{n-1}(\cC_f)=q(q^n-2q^{n-1}+1)\frac{q^{n}-1}{q-1}, \quad \quad A_n(\cC_f)=\frac{(q^n-1)^2}{q-1}.
\end{equation}
\end{prop}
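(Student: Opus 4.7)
The plan is to invoke Theorem \ref{weightdistribution} applied to the MRD code $\cC_f$, which by the preceding theorem has parameters $(n,n,q;n-1)$. Setting $m=n$ and $d=n-1$ in that statement, we have $m-d=1$, so the admissible values $\ell\in\{0,1,\ldots,m-d\}$ reduce to $\ell=0$ and $\ell=1$. Equivalently, every nonzero codeword of $\cC_f$ has rank either $n-1$ or $n$, and $A_0(\cC_f)=1$ is immediate from the $\F_q$-linearity of $\cC_f$ (only the zero polynomial has rank $0$).

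The two nontrivial values are then extracted directly from the two formulas of Theorem \ref{weightdistribution}. The closed-form minimum-weight count $A_d(\cC)=\qbin{m}{d}_q(q^n-1)$ specializes to $\qbin{n}{n-1}_q(q^n-1)=\frac{(q^n-1)^2}{q-1}$. For the remaining weight, substituting $\ell=1$ into the main formula produces the two-term sum
\[\qbin{n}{n}_q\left[(-1)^{-1}\qbin{n}{1}_q q^{0}(q^n-1)+\qbin{n}{0}_q q^{0}(q^{2n}-1)\right].\]
Using $\qbin{n}{n}_q=\qbin{n}{0}_q=1$ and $\qbin{n}{1}_q=\frac{q^n-1}{q-1}$, placing both terms over the common denominator $q-1$, and factoring out $q^n-1$, the bracket collapses to $\frac{(q^n-1)(q^{n+1}-2q^n+q)}{q-1}$. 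Recognizing $q^{n+1}-2q^n+q=q(q^n-2q^{n-1}+1)$ then yields $q(q^n-2q^{n-1}+1)\,\frac{q^n-1}{q-1}$.

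The only delicate step is bookkeeping of the sign $(-1)^{j-\ell}$ in the term with $j=0<\ell=1$ and the final algebraic regrouping; everything else is a direct substitution. As a consistency check, one verifies that the two nontrivial counts sum to $q^{2n}-1=|\cC_f|-1$ via the factorization $(q^n-1)(q^n+1)$. As an alternative that avoids Theorem \ref{weightdistribution} entirely, one can argue combinatorially: the scattered property of $f$ means that $x\mapsto f(x)/x$ on $\F_{q^n}^*$ is constant precisely on $\F_q^*$-cosets, so its image in $\F_{q^n}$ has cardinality $(q^n-1)/(q-1)$; these are exactly the values $c\in\F_{q^n}$ for which $f(X)+cX$ has a one-dimensional kernel. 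Rescaling by $b\in\F_{q^n}^*$ and adjoining the codewords $aX$ with $a\ne 0$ (all of full rank) recovers the same counts directly.
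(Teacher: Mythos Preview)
Your approach is precisely what the paper intends: the proposition is introduced with ``By using Proposition~\ref{weightdistribution}\ldots'' and no further proof is given, so a direct specialization of Delsarte's formula is all that is expected. Your bookkeeping of the $\ell=1$ term is correct, and the alternative combinatorial count you sketch at the end is a pleasant independent check that the paper does not mention.

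There is, however, a point you should have caught: your computation does not in fact reproduce the displayed equation. You correctly obtain
\[
A_{n-1}=\qbin{n}{n-1}_q(q^n-1)=\frac{(q^n-1)^2}{q-1}
\]
from the minimum-weight formula, and $A_n=q(q^n-2q^{n-1}+1)\dfrac{q^n-1}{q-1}$ from the $\ell=1$ term. These are the \emph{reverse} of what the proposition asserts. Your values are the right ones---your own combinatorial argument confirms it, since the rank-$(n-1)$ codewords are exactly the $(q^n-1)\cdot\frac{q^n-1}{q-1}$ polynomials $bf(X)+aX$ with $b\neq0$ and $-a/b$ in the image of $x\mapsto f(x)/x$. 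So the statement as printed has $A_{n-1}$ and $A_n$ interchanged; your write-up should flag this rather than present a calculation whose output silently disagrees with the formula it is meant to verify.
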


Let $f,g \in \tilde{\mathscr{L}}_{n,q}[X]$. They are said to be $\GaL$-\textit{equivalent} (resp. $\GL$-\textit{equivalent}) if the $\Fq$-subspaces $U_f$ and $U_g$ of $\Fqn^2$ are in the same $\GaL(2,q^n)$-orbit (resp. $\GL(2,q^n)$-orbit).
In \cite[Theorem 8]{Sheekey2016}, it is shown that two codes $\cC_f$ and $\cC_g$ are equivalent if and only if $f$ and $g$ are $\GaL$-equivalent.
Moreover, note that, since the code $\cC_f$ in \eqref{Cf} is an $\Fqn$-vector space of $\tilde{\mathscr{L}}_{n,q}[X]$, its left nucleus is
$$I_L(\cC_f)= \left \{ \alpha X \in \tilde{\mathscr{L}}_{n,q}[X] \colon \alpha \in \F_{q^n} \right \}$$
and it is isomorphic to the field $\Fqn$.\\

Before investigating the right idealiser of an MRD code $\cC_f$, let us recall the following. Let $G_f=\GL(2,q^n)_{\{U_f\}}$ be the set-wise stabilizer in $\GL(2,q^n)$ of the $\Fq$-subspace $U_f$, and
$G_f^\circ=G_f\cup\{O\}$, where $O$ is the zero $2\times2$ matrix.

\begin{proposition} \cite[Lemma 4.1]{Longobardi_Marino_Trombetti_Zhou}, \cite[Proposition 3.3 and Remark 3.4]{Longobardi_Zanella2024}. \label{le:full_auto_MRD}
Let $f \in \tilde{\mathscr{{L}}}_{n,q}[X]$ be a scattered polynomial and  denote the associated MRD code by $\cC_f$.  Then $\Aut(\cC_f)$ 
consists of elements of the type
\[g \mapsto \alpha X^{q^\ell} \circ g^\sigma \circ L\]
with $L \in \tilde{\mathscr{L}}_{n,q}[X]$ invertible , $\alpha\in \F_{q^n}^*$, $\ell \in \{0,1,\ldots, n-1\}$  and $\sigma \in \Aut(\Fq)$ such that 
$$\cC_{f^{\sigma q^\ell}} \circ X^{q^\ell}\circ L= \cC_f.$$
For any $\alpha \in \Fqn^*$, $\ell \in \{0,1,\ldots,n-1\}$ and $\sigma \in \Aut(\Fq)$, there is a bijection between the set of all $L$ such that  $(\alpha x^{q^\ell}, L, \sigma) \in \Aut(\mathcal{C}_f)$ and all linear isomorphism between $U_f$ to $U_{f^{\sigma q^\ell}}$. Furthermore, 
$I_R(\cC_f)$ and $G_f^\circ$  are isomorphic fields.
\end{proposition}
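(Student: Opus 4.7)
The plan is to first pin down the shape of the left factor $\alpha$ in any automorphism, then translate the remaining freedom into the language of the subspace $U_f$, and finally specialize to read off $I_R(\cC_f)$. By definition, every element of $\Aut(\cC_f)$ has the form $g \mapsto \alpha \circ g^\sigma \circ L$ with $\alpha, L \in \tilde{\mathscr{L}}_{n,q}[X]$ of rank $n$ and $\sigma \in \Aut(\F_q)$. Equivalences of $\F_q$-linear rank-metric codes conjugate the left idealiser; since $I_L(\cC_f) = \F_{q^n}X$ (noted just before the proposition) and $\sigma$ extends to an automorphism preserving $\F_{q^n}$ setwise, the equation $\alpha \circ \cC_f^\sigma \circ L = \cC_f$ forces $\alpha \circ \F_{q^n}X \circ \alpha^{-1} = \F_{q^n}X$. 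Writing $\alpha = \sum_{i=0}^{n-1} a_i X^{q^i}$ and expanding $\alpha \circ \mu X = \nu X \circ \alpha$ yields $a_i \mu^{q^i} = \nu a_i$ for every $i$; letting $\mu$ range over $\F_{q^n}$ forces a single surviving index $\ell$, so $\alpha = \alpha_0 X^{q^\ell}$ for some $\alpha_0 \in \F_{q^n}^*$ and $\ell \in \{0,\ldots,n-1\}$.

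Next, using the identity $X^{q^\ell} \circ h = h^{q^\ell} \circ X^{q^\ell}$ (with $h^{q^\ell}$ denoting the polynomial obtained from $h$ by raising all coefficients to the $q^\ell$-th power) together with the left $\F_{q^n}$-linearity of $\cC_f$, the condition $\alpha_0 X^{q^\ell} \circ \cC_f^\sigma \circ L = \cC_f$ rewrites as $\cC_{f^{\sigma q^\ell}} \circ X^{q^\ell} \circ L = \cC_f$. With $(\alpha_0, \ell, \sigma)$ fixed, this says that $X^{q^\ell} \circ L$ and $f^{\sigma q^\ell} \circ X^{q^\ell} \circ L$ both lie in $\cC_f$ and span it over $\F_{q^n}$, so they may be written as $aX + bf$ and $cX + df$ respectively, yielding a matrix $M = \begin{pmatrix} a & b \\ c & d \end{pmatrix} \in \GL(2, q^n)$. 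Setting $L' = X^{q^\ell} \circ L$, one checks that $M$ sends each $(x, f^{\sigma q^\ell}(x)) \in U_{f^{\sigma q^\ell}}$ to $(L'(x), f(L'(x))) \in U_f$, giving an $\F_q$-linear isomorphism $U_{f^{\sigma q^\ell}} \to U_f$; reversing this construction produces the inverse. I expect the main technical obstacle to lie here — confirming that $M$ is always invertible and that the correspondence is genuinely bijective rather than many-to-one — since the surrounding algebraic steps are essentially mechanical.

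Finally, specializing to $\alpha_0 = 1$, $\ell = 0$, $\sigma = \mathrm{id}$, the condition collapses to $\cC_f \circ L = \cC_f$, whose invertible solutions are exactly the nonzero elements of $I_R(\cC_f)$; pairing each with the corresponding $M \in G_f$ and sending $0 \mapsto O$ produces the bijection $I_R(\cC_f) \to G_f^\circ$. A direct inspection shows that this map is additive and sends polynomial composition to matrix multiplication, so it is a ring isomorphism. Since $\cC_f$ is an MRD code with parameters $(n,n,q;n-1)$, the result of Lunardon--Trombetti--Zhou recalled in Subsection \ref{RM-codes} ensures that $I_R(\cC_f)$ is a field; Theorem \ref{idealizers} then identifies it with a subfield of $\F_{q^n}$, and transporting this structure across the bijection gives the same for $G_f^\circ$.
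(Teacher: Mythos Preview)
The paper does not supply its own proof of this proposition: it is stated with citations to \cite{Longobardi_Marino_Trombetti_Zhou} and \cite{Longobardi_Zanella2024} and no argument is given in the survey itself. So there is nothing to compare your attempt against beyond the cited literature, and your outline follows the standard route taken there: use that $I_L(\cC_f)=\F_{q^n}X$ to force the left factor to be a monomial $\alpha X^{q^\ell}$, rewrite the remaining condition as $\cC_{f^{\sigma q^\ell}}\circ X^{q^\ell}\circ L=\cC_f$, and then read off the bijection with maps $U_f\to U_{f^{\sigma q^\ell}}$ in $\GL(2,q^n)$, specializing to $\ell=0$, $\sigma=\mathrm{id}$ to identify $I_R(\cC_f)$ with $G_f^\circ$.

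Your argument is essentially correct, but there is one genuine slip. With $L'=X^{q^\ell}\circ L=aX+bf$ and $f^{\sigma q^\ell}\circ L'=cX+df$, the matrix $M=\begin{pmatrix}a&b\\c&d\end{pmatrix}$ sends $(x,f(x))\in U_f$ to $(L'(x),f^{\sigma q^\ell}(L'(x)))\in U_{f^{\sigma q^\ell}}$, not the other way round: applying $M$ to $(x,f^{\sigma q^\ell}(x))$ gives $(ax+bf^{\sigma q^\ell}(x),\ldots)$, which is not $L'(x)=ax+bf(x)$ in general. This reversal is harmless for the bijection claim (isomorphisms go both ways) and for the final ring isomorphism, but the explicit check you wrote down is incorrect as stated. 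Also, the invertibility of $M$ that you flag as the ``main technical obstacle'' is immediate: since $L'$ is invertible, $\{L',\,f^{\sigma q^\ell}\circ L'\}$ is an $\F_{q^n}$-basis of $\cC_{f^{\sigma q^\ell}}\circ L'=\cC_f$, so $M$ is a change-of-basis matrix between two bases of a two-dimensional space. The verification that $L\mapsto M$ respects addition and composition is exactly the computation needed, and your appeal to \cite{Lunardon_Trombetti_Zhou} for $I_R(\cC_f)$ being a field is the right way to close the loop.
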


\subsection{Known scattered polynomials} \label{known-scattered-polynomials}

In this section we collect the known non-equivalent, up to $\GaL(2, q^n)$-equivalence, scattered polynomials along with the stabilizers of their related subspaces and the right idealisers of the associated MRD code. Assume $d$ is a divisor of $n$; by 
$$\mathrm{N}_{q^n/q^d}(x)=x^{(q^n-1)/(q^d-1)} \quad \quad \textnormal{ and } \quad \mathrm{Tr}_{q^n/q^d}(x)=x+x^{q^d}+\ldots+x^{q^{\frac{n}{d}-1}},$$
we denote the \emph{norm} and the \textit{trace}
of an element $x\in\Fqn$ over $\F_{q^d}$, respectively. 
Moreover, we will identify any non-singular matrix $A\in\Fqn^{2\times2}$ with the map $(x,y)\mapsto A(x,y)^t$ in $\GL(2,q^n)$.
\begin{enumerate}
\item $f_s^{1,n}:=X^{q^s}$, $\gcd(s,n)=1$ (\textit{pseudoregulus}), see \cite{blokhuis_scattered_2000}.\\
  In this case, it is immediate to see that the stabilizer in $\GL(2,q^n)$ and the right idealiser of the MRD code associated $\cC_{s}^{1,n}:=\cC_{f_s^{1,n}}$ are
\begin{equation}
    G_s^{1,n}:=G_{f_s^{1,n}}=\{\diag(\alpha,\alpha^
{q^s})\colon\alpha\in\Fqn^*\} \quad \textnormal{ and }\quad  I_R(\cC^{1,n}_{s})=\{\alpha X \colon \alpha \in \F_{q^n}\},
\end{equation}
respectively, see \cite{CsMaPoZa18}.
In particular, we have that the associated MRD code with a scattered polynomial of pseudoregulus type is, up to equivalence, the only one which has both idealisers of the largest possible size. Indeed, if $f \in\tilde{\mathscr{L}}_{n,q}[X]$ is a scattered polynomial not $\GaL$-equivalent to a polynomial of
pseudoregulus type, then $G_f^\circ$ is matrix field isomorphic to a proper subfield  of $\Fqn$, see \cite[Proposition 3.6]{Longobardi_Zanella2024}.
In the remainder of the survey, we will refer to any linear set $\mathrm{PGL}$-equivalent to $L_s^{1,n}:=L_{f_{s}^{1,n}}$ as a linear set of  \textit{pseudoregulus type}.

\item $f_s^{2,n}:= X^{q^s} + \delta X^{q^{n-s}}$, $n\geq 4$, $\mathrm{N}_{q^n/q}(\delta)\notin \{0,1\}$, $\gcd(s,n)=1$,  see \cite{Lunardon_Polverino}. Here, the stabilizer in $\GL(2,q^n)$ of the associated scattered subspace is
\begin{equation}
G_{s}^{2,n}:=G_{f^{2,n}_s}=\{\diag(\alpha,\alpha^{q^s})\colon\alpha\in\F_{q^{\gcd(2,n)}}^*  \}
\end{equation}
and the right idealiser of the code $\cC_{f_s^{2,n}}$ is
\begin{equation}
I_R(\cC_{f_s^{2,n}})=
 \{\alpha X \colon\alpha\in\F_{q^{\gcd(2,n)}}  \}, 
\end{equation}
see \cite{CsMaPoZa18} and \cite{Longobardi_Zanella2024}.
Similarly to what was done above, we will refer to any linear set $\mathrm{PGL}$-equivalent to $L_{s}^{2,n}:=L_{f_s^{2,n}}$ as a linear set of  \textit{Lunardon-Polverino type} or \textit{LP} \textit{type}.
\item $f^{3,n}_s:= X^{q^s}+ \eta X^{q^{s+n/2}}$, $n \in \{6,8\}$, $\gcd(s,n/2)=1$, $\mathrm{N}_{q^n/q^{n/2}}(\eta) \notin \{0,1\}$, see \cite[Theorem 7.1 and 7.2]{CsMaPoZa18} and \cite{TiZi}. 
In this case 
\begin{equation}
    G_s^{3,n}:=G_{f^{3,n}_s}=\{\diag(\alpha,\alpha^{q^s})\colon\alpha\in\F_{q^{n/2}}^*\} \quad \textnormal{ and } \quad I_R(\cC_{f^{3,n}_s})=\{\alpha X : \alpha \in \F_{q^{n/2}}\}.
\end{equation} 
\item 
$f^{4}_\theta:=X^{q}+X^{q^{3}}+\theta X^{q^{5}} \in \tilde{\mathscr{L}}_{6,q}[X]$,  where  $\theta^2+\theta=1$ if
 $q$ is odd, \cite{CsMaZu18,MaMoZu20};  some  conditions on $\theta$, if $q$ is large enough and even, \cite[Theorem 2.9]{scatteredeven}.
In both odd and even characteristic, 
\begin{equation}
 G^{4}_{\theta}:=G_{f^4_{\theta}}=\{\diag(\alpha,\alpha^{q})\colon\alpha\in\F_{q^{2}}^*\} \quad \textnormal{ and } \quad I_R(\cC_{f^4_\theta})=\{\alpha X \colon \alpha \in \F_{q^2}\}.
 \end{equation}
\item The quadrinomial with coefficients over $\F_{q^{2t}}$, $q$ odd $t \geq 3$ and $\gcd(s,2t)=1$,
  \begin{equation}\label{quad}
      \psi_{m,h,s}:=  m \left (X^{q^s}-h^{1-q^{s(t+1)}}X^{q^{s(t+1)}} \right )+X^{q^{s(t-1)}}+h^{1-q^{s(2t-1)}}X^{q^{s(2t-1)}} \in \tilde{\mathscr{L}}_{2t,q,s}[X], 
      \end{equation}
      with $(m,h) \in \F_{q^t} \times \F_{q^{2t}}$.
This polynomial has been studied extensively by several authors and generalised in various steps which we briefly recap below. More precisely, the polynomial $\psi_{m,h,s}$ turns to be  scattered:
\begin{itemize}
    \item [$(i)$] for $m=1$ and $h \in \F_{q^t}$ with $h^2=-1$. This was introduced  for $t=3$ in \cite{Zanella_Zullo} and studied in \cite{BarZanZu}. Later, it is generalized for any $t \geq 3$ in \cite{Longobardi_Zanella2021} (see also \cite{NeSanZu}). Note that $h^2=-1$ implies that $h \in  \F_{q^{\gcd(t,2)}}$. So, if $t$ is odd, $q \equiv 1 \pmod 4$;

    \item [$(ii)$] for $m=1$ and $h \in \F_{q^{2t}} \setminus \F_{q^t}$ with $\mathrm{N}_{q^{2t}/q^t}(h)=-1$, see \cite{BarZanZu} for $(s,t)=(1,3)$ and \cite{Longobardi_Marino_Trombetti_Zhou, NeSanZu} for any integer $s$ coprime to $t$ and  $t \geq 3$;
    
    \item [$(iii)$] for $h \in \F_q$ and $m \in \F_{q^t} \setminus (\mathcal{P}^+ \cup \mathcal{P}^-)$ where
    \begin{equation}\label{Pset}
        \mathcal{P}^+=\{w^{q+1} \in \F_{q^t}  \colon w \in \ker \mathrm{Tr}_{q^{2t}/q^t} \} \quad \textnormal{ and }  \mathcal{P}^-=\{w^{q-1} \in \F_{q^t}  \colon w \in \ker \mathrm{Tr}_{q^{2t}/q^t} \},
    \end{equation}
    see \cite{SmaZanZu};
\item [$(iv)$] for $(m,h) \in \F_{q^t} \times (\F_{q^{2t}}\setminus \F_{q^t})$ with 
\begin{equation}
\begin{cases}
      \mathrm{N}_{q^{2t}/q^t}(h)=\pm 1 \text{ and } m \in \mathbb{F}_{q^t} \setminus (\mathcal{P}^+ \cup \mathcal{P}^{-}) & \textnormal{if $t$ is even or $t$ is odd  and $q \equiv 1\,\, (\mathrm{mod}\,4)$}\\ 
      \mathrm{N}_{q^{2t}/q^t}(h)=-1 \text{ and } m \in \mathcal{P}^+ & \textnormal{if $t$ is odd, $q \equiv 3\,\,(\mathrm{mod }\,4)$} \\
       \mathrm{N}_{q^{2t}/q^t}(h)=1 \textnormal{ with } h^2 \neq -1\text{ and } m \in \mathbb{F}_{q^t} \setminus (\mathcal{P}^+ \cup \mathcal{P}^{-}) & \textnormal{if $t$ is odd, $q \equiv 3\,\, (\mathrm{mod}\,4)$,}
     \end{cases}
     \end{equation}
     where $\mathcal{P}^+$ and $\mathcal{P}^-$ are as in \eqref{Pset}, see \cite{GiaGriLonTim}. 

     Finally, for the cases $(i)$-$(iii)$, we have the description of the stabilizers $G_{m,h,s}:=G_{\psi_{m,h,s}}$ of the scattered subspace $U_{\psi_{m,h,s}}$ and this is isomorphic to the multiplicative group of $\F_{q^2}$, see \cite{Longobardi_Marino_Trombetti_Zhou,Longobardi_Zanella2024, NeSanZu, SmaZanZu}. In particular, for $t$ even 
\begin{equation}
    G_{m,h,s}=\{\diag(\alpha,\alpha^q) : \alpha \in \F_{q^2}^*\} \quad \quad \textnormal{ and } \quad \quad I_R(\mathcal{C}_{\psi_{m,h,s}})=\{\alpha X \colon \alpha \in \F_{q^2}\}.
\end{equation}
\end{itemize}
\end{enumerate}

Note that some of the families of linear sets defined by the polynomials listed above, may share elements, see \cite[Proposition 3.10]{BarZanZu} and \cite[Proposition 5.5]{Zanella_Zullo}. 
	
\subsection{Standard form for scattered polynomials}
In this section, we will show that any scattered polynomial such that the right idealiser of the associated code is isomorphic to a proper extension of $\F_q$, is equivalent to one with a particular shape.
From now on, we will denote the set of all scattered polynomials $f \in \tilde{\mathscr{L}}_{n,q}[X]$
such that $G_f^\circ$ is not isomorphic to $\Fq$ by $\mathscr{S}_{n,q}$.

\begin{definition}\label{standard-form}
\textnormal{Let $h=\sum_{i=0}^{n-1}b_iX^{q^i}$ be a scattered polynomial,
\[\Delta_h=\{(i-j) \mod n\colon b_ib_j\neq0 \, \text{and} \, i \neq j \}\cup\{n\},\] and let $r_h$ be the greatest common divisor
of $\Delta_h$.
If $r_h>1$ then $h$ is in \emph{standard form}.}
\end{definition}
For instance, the scattered polynomial of LP type $h:=f_{1}^{2,n}=X^q+\delta X^{q^{n-1}}$, $n$ even,
is in standard form. Indeed, $\Delta_h=\{2,n-2,n\}$ and $r_h=2$. On the other hand if $n$ is odd, $f_{2}^{2,n}$ is not in standard form.
Moreover, if a scattered polynomial $h$ is in standard form, then it has the following shape:
$$\sum^{n/r
-1}_{
j=0}
b_jX^{q^{jr+s}}
,$$
where $r = r_h$ and $0 \leq  s < r$ with $\gcd(s,r)=1$, see \cite[Remark 4.2]{Longobardi_Zanella2024}.

%\begin{thm}\cite[Theorem 4.3]{Longobardi_Zanella2024}, \cite[Theorem 2.1]{GuLoTro} \label{t:sf}
%Let $h$ be  a scattered polynomial in $\tilde{\mathscr{L}}_{n,q}[X]$.
%The following statements are equivalent:
%\begin{itemize}
%\item [$(i)$] $h$ is in standard form;
%\item [$(ii)$] $|G_h^\circ|=q^r$, $r>1$, and all elements of $G_h$ are diagonal;
%\item [$(iii)$] the right idealizer of the code $\cC_h$ is 
%\begin{equation*}
%    I_R(\cC_h)=\{\alpha X : \alpha \in \F_{q^r}\}.
%\end{equation*}
%\end{itemize}
%If the conditions $(i)$, $(ii)$ and $(iii)$ above hold, then $r=r_h$ and
%\begin{equation}\label{eq:sfgf}
%G_h^\circ=\biggl \{\begin{pmatrix}
 %   \alpha & 0\\
 %   0 & \alpha^{q^s}
%\end{pmatrix} \colon \alpha \in \F_{q^{r}} \biggr\}
%\end{equation}
%where $1 \leq s < r$ and $\gcd(s,r)=1$.
%\end{thm}
%\end{comment}

Any scattered polynomial in standard form is bijective, see \cite[Theorem 4.5]{Longobardi_Zanella2024}. Indeed, if  $h$ is a scattered polynomial in standard form, it induces an $\F_{q^r}$-semilinear map $a \in \Fqn \longmapsto h(a) \in \Fqn$ where $r=r_h > 1$. This implies that its kernel is an $\F_{q^r }$-subspace of $\F_{q^n}$. 
Now, if $0 \neq  \bar{x} \in\ker h$, then 
$\dim_{\F_q} (U_h \cap \langle (\bar{x},0)\rangle_{\F_{q^n}}) >1$, against the assumption that $h$ is scattered. Hence, the kernel $\ker h$ must be the null space.

\begin{thm}\cite[Corollary 4.7]{Longobardi_Zanella2024}
Let $f$ be  a scattered polynomial in $\mathscr{S}_{n,q}$. 
Then $f$ is $\GL$-equivalent to a polynomial $h$ in standard form.
\end{thm}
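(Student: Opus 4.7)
The plan is to exploit the field structure of $G_f^\circ$ provided by Proposition \ref{le:full_auto_MRD} and then, by conjugating $f$, arrange that the stabiliser of the corresponding $\F_q$-subspace becomes a group of diagonal matrices of a very restrictive shape. This will force the associated polynomial to be in standard form.

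First I would record that since $f\in\mathscr{S}_{n,q}$ the field $G_f^\circ$ is strictly larger than $\F_q I$, and Theorem \ref{idealizers} combined with $G_f^\circ\cong I_R(\cC_f)$ identifies $G_f^\circ$ with a subfield $\F_{q^r}\subseteq\F_{q^n}$ for some divisor $r>1$ of $n$. Pick a generator $g$ of the cyclic group $G_f$ of order $q^r-1$. Since $g^{q^r-1}=I$ and $X^{q^r-1}-1$ is separable with all roots in $\F_{q^r}\subseteq\F_{q^n}$, the minimal polynomial of $g$ splits into distinct linear factors over $\F_{q^n}$; hence $g$ is diagonalisable over $\F_{q^n}$. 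Moreover its two eigenvalues must be distinct, for otherwise $g=\lambda I$ would give $G_f=\F_{q^r}^*I$ and would make $U_f$ into an $\F_{q^r}$-subspace of $\F_{q^n}^2$; then for any nonzero $v\in U_f$ one would have $\dim_{\F_q}(U_f\cap\langle v\rangle_{\F_{q^n}})\geq r>1$, contradicting the scatteredness of $f$.

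Next, pick $A\in\GL(2,q^n)$ with $AgA^{-1}=\diag(\lambda,\mu)$, $\lambda\neq\mu$. The centraliser of this matrix inside $2\times 2$ matrices over $\F_{q^n}$ is the algebra of diagonal matrices, so $AG_f^\circ A^{-1}$ consists entirely of diagonal matrices. Each coordinate projection restricted to this subfield is a nonzero ring homomorphism into $\F_{q^n}$, hence an injection with image $\F_{q^r}$; composing the two gives a field automorphism of $\F_{q^r}$, necessarily of the form $\alpha\mapsto\alpha^{q^s}$. Therefore
\[AG_f^\circ A^{-1}=\{\diag(\alpha,\alpha^{q^s}):\alpha\in\F_{q^r}\}.\]
To turn this into an equivalence of polynomials I would check that $AU_f$ avoids $\{0\}\times\F_{q^n}^*$: otherwise stability under $\diag(\alpha,\alpha^{q^s})$ would place $\{0\}\times\F_{q^r}y$ inside $AU_f$, again violating scatteredness. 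Consequently $AU_f=U_h$ for a unique $h\in\tilde{\mathscr{L}}_{n,q}[X]$, which is $\GL$-equivalent to $f$.

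Finally, writing $h=\sum_{i=0}^{n-1}c_iX^{q^i}$, the stabiliser condition becomes the functional equation $h(\alpha x)=\alpha^{q^s}h(x)$ for all $\alpha\in\F_{q^r}$ and $x\in\F_{q^n}$; comparing coefficients gives $c_i(\alpha^{q^i}-\alpha^{q^s})=0$ for every $\alpha\in\F_{q^r}$, which forces $c_i=0$ unless $i\equiv s\pmod r$. Hence every element of $\Delta_h$ is a multiple of $r$, so $r\mid r_h$ and $r_h\geq r>1$, which is exactly the definition of $h$ being in standard form. The main obstacle is the structural step identifying $AG_f^\circ A^{-1}$ in this explicit diagonal shape: both the diagonalisability of $g$ over $\F_{q^n}$ and the exclusion of a scalar generator rely essentially on the scattered hypothesis, whereas once this normal form is in hand the last step reduces to an easy coefficient comparison.
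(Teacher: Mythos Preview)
Your argument is correct. The survey itself does not contain a proof of this statement; it is merely quoted as \cite[Corollary 4.7]{Longobardi_Zanella2024}, so there is no in-paper proof to compare against. Your approach---diagonalising a generator of the cyclic group $G_f$, identifying $AG_f^\circ A^{-1}$ with $\{\diag(\alpha,\alpha^{q^s}):\alpha\in\F_{q^r}\}$ via the two coordinate projections, and then reading off the congruence condition on the exponents of $h$---is exactly the natural route suggested by the surrounding material (Proposition \ref{le:full_auto_MRD} and Theorem \ref{idealizers}), and each step is justified. Two small points worth making explicit in a write-up: (i) the case $s\equiv 0\pmod r$ is excluded because it would force $AG_f^\circ A^{-1}=\F_{q^r}I$, hence $G_f^\circ=\F_{q^r}I$, which you already ruled out via scatteredness; and (ii) the scatteredness of $AU_f$ (used both to exclude $(0,y)\in AU_f$ and implicitly to call $h$ scattered) is immediate since scatteredness is a $\GL(2,q^n)$-invariant of the subspace.
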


We can refer to the scattered polynomial $h$ above as \textit{a standard form of} $f$.
Actually, a standard
form for a scattered polynomial $f$ is \textit{essentially unique}. More precisely, the following holds.

\begin{proposition}\cite[Proposition 4.8]{Longobardi_Zanella2024}
Let $h_1$ and $h_2$ be scattered polynomials in standard form belonging to $\tilde{\mathscr{L}}_{n,q}[X]$.  If they are $\GL$-equivalent to a scattered 
polynomial $f \in \tilde{\mathscr{L}}_{n,q}[X]$, then there exist $a,b\in\Fqn^*$ such that  
\begin{equation*}
    bh_2 = h_1 \circ aX \quad \quad \textit{ or }\quad \quad  h_1
\circ a h_2 = bX.
\end{equation*}
\end{proposition}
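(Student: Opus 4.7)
The plan is to reduce to an explicit matrix identity coming from the equivalence $h_1\sim h_2$, and then use the standard-form hypothesis to pin that matrix down to a very restricted shape. By transitivity of $\GL$-equivalence, $h_1$ and $h_2$ are themselves $\GL(2,q^n)$-equivalent, so there exists
\[
M=\begin{pmatrix}\alpha & \beta\\ \gamma & \delta\end{pmatrix}\in\GL(2,q^n)
\]
with $M\cdot U_{h_1}=U_{h_2}$; unpacking this on the graph of $h_1$ yields the polynomial identity $h_2(\alpha x+\beta h_1(x))=\gamma x+\delta h_1(x)$ for every $x\in\Fqn$.

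The first key step is to show that for any $h\in\tilde{\mathscr{L}}_{n,q}[X]$ in standard form, with $r=r_h>1$ and $s$ as in Definition \ref{standard-form}, the field $G_h^\circ$ consists entirely of diagonal matrices. Writing $h=\sum_j b_jX^{q^{jr+s}}$, a direct computation shows that $\diag(\lambda,\lambda^{q^s})$ stabilises $U_h$ for every $\lambda\in\F_{q^r}$. Since $r>1$, one may choose $\lambda\in\F_{q^r}\setminus\Fq$, so $\lambda\ne\lambda^{q^s}$, giving a diagonal element of $G_h^\circ$ with distinct diagonal entries. Because $G_h^\circ$ is commutative by Proposition \ref{le:full_auto_MRD}, every element of $G_h^\circ$ commutes with this matrix and must therefore itself be diagonal. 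Applied to both $h_1$ and $h_2$, this shows $G_{h_1}^\circ$ and $G_{h_2}^\circ$ are fields of diagonal matrices.

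From $M\cdot U_{h_1}=U_{h_2}$ one obtains $MG_{h_1}^\circ M^{-1}=G_{h_2}^\circ$. Picking $D\in G_{h_1}^\circ$ with distinct diagonal entries, the matrix $MDM^{-1}$ must be diagonal; a short computation of its off-diagonal entries gives $m_{11}m_{12}=m_{21}m_{22}=0$, where $m_{ij}$ denotes the entries of $M$, and combined with $\det M\ne 0$ this forces $M$ to be either diagonal or anti-diagonal. If $M=\diag(\alpha,\delta)$, the displayed identity reduces to $\delta h_1(x)=h_2(\alpha x)$, which rearranges to $bh_2=h_1\circ aX$ with $a=1/\alpha$ and $b=1/\delta$. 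If $M$ is anti-diagonal, i.e.\ $\alpha=\delta=0$ with $\beta\gamma\ne 0$, the identity becomes $h_2(\beta h_1(x))=\gamma x$; using bijectivity of $h_1$ and $h_2$ (both in standard form, hence bijective), this rearranges to $h_1((1/\gamma)h_2(y))=(1/\beta)y$, i.e.\ $h_1\circ a h_2=bX$ with $a=1/\gamma$ and $b=1/\beta$.

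The main obstacle is the diagonalisation of $G_h^\circ$, and this is exactly where the standard-form assumption $r_h>1$ is indispensable: for $r_h=1$ the diagonal subgroup produced above collapses and contains no element with distinct diagonal entries, so the commutation argument would not constrain $M$. Once the diagonalisation is in hand, the rest of the argument is a short commutator calculation in $\GL(2,q^n)$ and the two elementary case manipulations above.
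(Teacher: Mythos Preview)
The survey does not supply its own proof of this proposition; it simply quotes the result from \cite{Longobardi_Zanella2024}. Your argument is correct and self-contained. The crucial observation---that for $h$ in standard form the field $G_h^\circ$ contains the diagonal matrices $\diag(\lambda,\lambda^{q^s})$ with $\lambda\in\F_{q^r}$, hence (by commutativity of the field $G_h^\circ$ and the existence of one such matrix with distinct entries) consists entirely of diagonal matrices---is exactly the right leverage, and the conjugation $MG_{h_1}^\circ M^{-1}=G_{h_2}^\circ$ then forces $M$ to be diagonal or anti-diagonal as you compute. The two resulting cases are handled correctly; in the anti-diagonal case your rearrangement amounts to inverting $h_2\circ(\beta X)\circ h_1=\gamma X$ to obtain $h_1=(\beta^{-1}X)\circ h_2^{-1}\circ(\gamma X)$ and then composing on the right with $(\gamma^{-1}X)\circ h_2$, which is legitimate precisely because $h_1,h_2$ are bijective in standard form. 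Your closing remark that the hypothesis $r_h>1$ is exactly what produces a diagonal element with distinct entries (since $\gcd(s,r)=1$ forces $\lambda^{q^s}\neq\lambda$ for $\lambda\in\F_{q^r}\setminus\F_q$) is also on point.
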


Moreover, since any polynomial that is $\GaL$-equivalent to $f$ is $\GL$-equivalent to $f^\rho$ for
some automorphism $\rho$ of $\Fqn$,  we can state the result.

\begin{thm}  Let $f_1$,$f_2$ be scattered polynomials in $\mathscr{S}_{n,q}$ having $h_1$ and $h_2$ as standard forms. These are $\GaL$-equivalent if and only if there exist $a,b\in\Fqn^*$ and an
automorphism $\rho$ of $\Fqn$, such that \begin{equation*}
    bh_2 = h_1^\rho \circ aX \quad \quad \textit{ or }\quad \quad  h_1^\rho
\circ a h_2 = bX
\end{equation*}
\end{thm}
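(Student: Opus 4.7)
The plan is to reduce $\Gamma\mathrm{L}$-equivalence to the already-established $\mathrm{GL}$-equivalence result (the preceding proposition) via the observation quoted just before the theorem: any polynomial $\Gamma\mathrm{L}$-equivalent to $f_1$ is $\mathrm{GL}$-equivalent to $f_1^\rho$ for some $\rho\in\Aut(\F_{q^n})$. Concretely, for the forward direction, assume $f_1$ and $f_2$ are $\Gamma\mathrm{L}$-equivalent. Choose $\rho$ so that $f_1^\rho$ and $f_2$ are $\mathrm{GL}$-equivalent. To invoke the previous proposition I need a standard form of $f_1^\rho$. The natural candidate is $h_1^\rho$, the polynomial obtained by applying $\rho$ to all coefficients of $h_1$.

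The key auxiliary step is therefore to verify that $h_1^\rho$ is a standard form of $f_1^\rho$. Two points must be checked. First, $h_1^\rho$ is scattered: indeed $U_{h_1^\rho}$ is the image of $U_{h_1}$ under the semilinear map $(x,y)\mapsto(x^\rho,y^\rho)$, which preserves $\F_q$-dimensions of intersections with $\F_{q^n}$-lines, so $h_1^\rho$ is scattered and is $\Gamma\mathrm{L}$-equivalent to $h_1$, hence to $f_1^\rho$. Second, $h_1^\rho$ is in standard form with the same parameter $r=r_{h_1}>1$: since $\rho$ is a bijection on $\F_{q^n}$, a coefficient $b_i$ of $h_1$ vanishes if and only if $\rho(b_i)$ does, so $\Delta_{h_1^\rho}=\Delta_{h_1}$ and hence $r_{h_1^\rho}=r_{h_1}$. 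However, to apply the preceding proposition I actually need $h_1^\rho$ to be a standard form of $f_1^\rho$ in the sense that it is $\mathrm{GL}$-equivalent to $f_1^\rho$; by the previous theorem $h_1$ is $\mathrm{GL}$-equivalent to $f_1$, and applying $\rho$ coefficient-wise preserves $\mathrm{GL}$-equivalence (conjugation of matrix entries by $\rho$ is a ring automorphism of $\tilde{\mathscr{L}}_{n,q}[X]$), so $h_1^\rho$ is $\mathrm{GL}$-equivalent to $f_1^\rho$, and is therefore a standard form for it.

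With this in hand, the preceding proposition applied to $f_1^\rho$ and $f_2$, whose standard forms are $h_1^\rho$ and $h_2$, immediately yields $a,b\in\Fqn^*$ such that
\[ bh_2=h_1^\rho\circ aX \quad\text{or}\quad h_1^\rho\circ ah_2=bX,\]
which is the desired conclusion. For the converse, suppose some $a,b\in\Fqn^*$ and $\rho\in\Aut(\F_{q^n})$ satisfy one of the two identities. Then $h_1^\rho$ and $h_2$ are $\mathrm{GL}$-equivalent by the same proposition (applied in reverse). Since $h_i$ is $\mathrm{GL}$-equivalent to $f_i$ for $i=1,2$, and since applying $\rho$ turns a $\mathrm{GL}$-equivalence between $h_1$ and $f_1$ into one between $h_1^\rho$ and $f_1^\rho$, we conclude that $f_1^\rho$ and $f_2$ are $\mathrm{GL}$-equivalent, hence $f_1$ and $f_2$ are $\Gamma\mathrm{L}$-equivalent.

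The main (essentially only) obstacle is the bookkeeping verification that $h^\rho$ plays the role of a standard form for $f^\rho$; once that is in place the theorem follows from the previous proposition by a direct translation. No new calculation with the polynomials themselves is needed beyond that already recorded in the $\mathrm{GL}$-version.
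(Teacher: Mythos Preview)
Your proposal is correct and follows exactly the route the paper indicates: the paper does not give a separate proof of this theorem, but simply notes just before stating it that ``any polynomial that is $\GaL$-equivalent to $f$ is $\GL$-equivalent to $f^\rho$ for some automorphism $\rho$ of $\Fqn$,'' and then states the result as an immediate consequence of the preceding $\GL$-version. Your write-up supplies precisely the bookkeeping the paper leaves implicit, namely that $h_1^\rho$ is again in standard form and is $\GL$-equivalent to $f_1^\rho$; the only cosmetic point is that for the converse you do not need to invoke the proposition ``in reverse,'' since each of the two displayed identities already exhibits an explicit element of $\GL(2,q^n)$ carrying $U_{h_2}$ to $U_{h_1^\rho}$.
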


As a consequence on the codes associated with two scattered polynomials in $\mathscr{S}_{n,q}$, we can state the following.

\begin{cor} \cite[Theorem 2.2]{GuLoTro} Let $\cC_i=\cC_{f_i}$ be maximum rank distance codes  as in \eqref{Cf} such that $I_R(\cC_i)$, are not isomorphic to $\Fq$, $i = 1, 2$.
Then, $\cC_1$ and $\cC_2$ are equivalent if and only if there exists $a,b\in\Fqn^*$ and an
automorphism $\rho$ of $\Fqn$, such that \begin{equation*}
    bh_2 = h_1^\rho \circ aX \quad \quad \textit{ or }\quad \quad  h_1^\rho
\circ a h_2 = bX,
\end{equation*}
where $h_i$ is a standard form of $f_i$, $i=1,2$.
\end{cor}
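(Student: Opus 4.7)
The plan is to chain together three earlier facts: (i) Sheekey's criterion that $\cC_{f_1}\cong\cC_{f_2}$ as rank-metric codes if and only if the scattered polynomials $f_1$ and $f_2$ are $\GaL$-equivalent (the result of \cite[Theorem 8]{Sheekey2016} quoted just after the defining theorem of $\cC_f$); (ii) the fact that for $f\in\mathscr{S}_{n,q}$ any standard form $h$ of $f$ is $\GL$-equivalent to $f$ (Theorem preceding the corollary); and (iii) the $\GaL$-equivalence criterion for polynomials in standard form (the immediately preceding Theorem).

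First I would observe that, by Proposition \ref{le:full_auto_MRD}, the fields $I_R(\cC_{f_i})$ and $G_{f_i}^\circ$ are isomorphic; hence the hypothesis $I_R(\cC_i)\not\cong\F_q$ is exactly the statement that $f_i\in\mathscr{S}_{n,q}$ for $i=1,2$. Thus each $f_i$ admits a standard form $h_i$ which is $\GL$-equivalent (and in particular $\GaL$-equivalent) to $f_i$. Since $\GaL$-equivalence of scattered polynomials is an equivalence relation on $\tilde{\mathscr{L}}_{n,q}[X]$, $f_1$ and $f_2$ are $\GaL$-equivalent if and only if $h_1$ and $h_2$ are.

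Next I would apply step (i) to rewrite the equivalence of the codes: $\cC_1\cong\cC_2$ iff $f_1$ and $f_2$ are $\GaL$-equivalent, which by the previous paragraph is equivalent to the $\GaL$-equivalence of $h_1$ and $h_2$. Finally, the preceding Theorem characterises this last condition exactly as the existence of $a,b\in\Fqn^*$ and $\rho\in\Aut(\Fqn)$ such that
\[
bh_2 = h_1^\rho\circ aX \qquad\text{or}\qquad h_1^\rho\circ a h_2 = bX,
\]
which is the desired condition. The corollary follows.

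The only subtle point, and the place where care is needed, is step (i): one must check that Sheekey's equivalence criterion really translates between matrix equivalence of $\cC_{f_i}$ (with the triple $(P,Q,\rho)$) and the $\GaL(2,q^n)$-action on $U_{f_i}\subseteq\Fqn^2$, i.e., the identification of code equivalences with linear isomorphisms of the associated graph subspaces already recorded in Proposition \ref{le:full_auto_MRD}. Once this translation is in hand the rest is a formal concatenation of equivalences and no new computation is required.
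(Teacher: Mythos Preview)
Your proposal is correct and follows precisely the approach the paper intends: the corollary is presented in the paper as an immediate consequence of the preceding theorem (the $\GaL$-equivalence criterion for polynomials in $\mathscr{S}_{n,q}$ via their standard forms) combined with Sheekey's result that $\cC_f\cong\cC_g$ if and only if $f$ and $g$ are $\GaL$-equivalent, and you have spelled out exactly this chain, including the translation $I_R(\cC_i)\not\cong\F_q\Leftrightarrow f_i\in\mathscr{S}_{n,q}$ via Proposition~\ref{le:full_auto_MRD}.
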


Note that for $t$ even, the scattered polynomial $\psi_{m,h,s}$ is in standard form. On the other hand, if $t$ is odd, $\psi_{m,h,s}$ is not. In \cite[Example 4.10 and 4.11]{Longobardi_Zanella2024}, a standard form for $\psi_{m,h,s}$ with $h \in \F_q$ and for $\psi_{1,h,s} \in \mathscr{L}_{6,q,s}[X]$  is computed.\\
This allowed the study of the equivalence issue among codes of the type $\cC_{\psi_{m,h,s}} \subset  \tilde{\mathscr{L}}_{2t,q,s}[X]$ where $t \in \{3,4\}$. Indeed, in \cite{NeSanZu} Neri \textit{et al.} determined 
the equivalence classes of the codes  in this  family and
provided the exact number of inequivalent ones in it for $t \geq 5$. In \cite{GuLoTro}, using the notion of standard form of a scattered polynomials, the authors completed this study removing this restriction.

\section{\texorpdfstring{The $\GaL$}{TEXT}-class of a linear set and the code \texorpdfstring{$\cC_{\widehat{f}}$}{TEXT}}

As seen in Subsection \ref{linearsets}, an $\F_q$-linear set can be defined by different $\F_q$-subspaces. These can be  
in the same orbit under the action of the group $\mathrm{\Gamma L}(r,q^n)$ or not. For instance, if $s \ne 1$ the $\F_q$-subspaces $U=\{ (x,x^q):x \in \F_{q^n} \}$ and $W=\{ (x,x^{q^s}): x \in \F_{q^n}\}$ are not $\mathrm{\Gamma L}$-equivalent but define the same linear set. Indeed,
$$\{ \langle (1,x^{q-1}) \rangle_{\F_{q^n}}: x \in \F_{q^n}^*\}=\{ \langle (1,x) \rangle_{\F_{q^n}}: x \in \F_{q^n}^*, \mathrm{N}_{q^n/q}(x)=1\}=\{ \langle (1,x^{q^s-1}) \rangle_{\F_{q^n}}: x \in \F_{q^n}^*\},$$
see  \cite{csajbok_zanella}.  This leads to the introduction of the notion of $\mathcal{Z}(\GaL)$-class and $\GaL$-class of a linear set of rank $n$ in $\PG(1,q^n)$, \cite{classes} . Precisely,
\begin{definition}\cite[Definition 2.4]{classes}
Let $L=L_U$ be an $\F_q$-linear set of $\PG(1, q^n)$ of rank $n$ with maximum field of linearity $\F_q$, i.e., $L$ is not an $\F_{q^v}$-linear set
for any $v > 1$. Then $L_U$ has $\mathcal{Z}(\GaL)$-\textit{class} $z$, if $z$ is the largest integer
such that there exist $\F_q$-subspaces $U_1, U_2,\ldots, U_z$ of $\F_{q^n}^2$ with 
\begin{enumerate}
    \item $L=L_{U_i}$ for
$i \in \{1, 2, \ldots, z\}$, and
\item there is no $\lambda 
 \in \F_{q^n}^*$ such that $U_i= \lambda U_j$  for each
$i \neq  j$ and  $i, j \in \{1, 2,\ldots, z\}$.
\end{enumerate} 
\end{definition}
 Similarly, the notion of $\GaL$-class of an $\F_q$-linear set was defined.

\begin{definition} \cite[Definition 2.5]{classes}. Let $L=L_U$ be an $\F_q$-linear set of $\PG(1, q^n)$ of rank $n$ with maximum field of linearity $\F_q$. Then $L$ is of $\GaL$-\textit{class} $c$, if $c$ is the largest integer
such that there exist $\F_q$-subspaces $U_1,U_2,\ldots,U_c$ of $\F_{q^n}^2$ with
\begin{enumerate}
    \item  $L=L_{U_i}$ for $i\in \{1, 2, \ldots, c\}$, and
    \item there is no $\kappa  \in  \GaL(2, q^n)$ such that $U_i = U_j^\kappa$ for each $i \neq j$, $i, j \in \{1, 2,\ldots, c\}$.
\end{enumerate} 
\end{definition}

In the following, we will denote the $\mathcal{Z}(\GaL)$-class and the $\GaL$-class of a linear set $L$ by $c_{\mathcal{Z}(\GaL)}(L)$ and $c_{\GaL}(L)$, respectively. These integers are invariant under equivalence, see \cite[Proposition 2.6]{classes}, and it is clear that $c_{\GaL}(L)  \leq c_{\mathcal{Z}(\GaL)}(L)$. The linear set $L$ is called $simple$ if its  $\GaL$-class is  one.\\
For instance, in \cite{csajbok_zanella} the authors proved that an $\F_q$-linear set of pseudoregulus type $L^{1,n}_s:=L_{f^{1,n}_s}$ has  $\GaL$-class $\phi(n)/2$, where $\phi$ is the totient function, hence they are non-simple for $n = 5$ and $n > 6$. The existence of other non-simple linear sets of rank $n$ in $\PG(1,q^n)$ is investigated in \cite{classes}. 
In order to understand the idea behind these results, let us recall the following.

\begin{thm} \cite{BaGiuMaPo}, \cite[Lemma 3.1]{classes}. Let 
$L_f$ be an $\F_q$-linear set in $\PG(1,q^n)$ of rank $n$, 
$f \in  
\mathscr{\tilde{L}}_{n,q}[X]$, and let 
$\hat{f}$ denote its adjoint. Then, the linear sets defined by  $f$ and 
$\hat{f}$ coincide, i.e., 
$L_f=L_{\hat{f}}$. In particular, if 
$f$ is scattered,  then $\hat{f}$ is also scattered.
\end{thm}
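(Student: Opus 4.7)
The plan is to prove that a point $\langle(1,\alpha)\rangle_{\F_{q^n}}$ of $\PG(1,q^n)$ lies in $L_f$ if and only if it lies in $L_{\hat f}$, by exploiting the trace-form duality between a linearized polynomial and its adjoint. Once membership is characterised point-by-point, everything reduces to the question: for which $\alpha\in\F_{q^n}$ does $f-\alpha X$ fail to be injective on $\F_{q^n}$?

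First I would handle the point $\langle(0,1)\rangle_{\F_{q^n}}$: it lies in neither $L_f$ nor $L_{\hat f}$, since $U_f=\{(x,f(x)) : x\in\F_{q^n}\}$ and $U_{\hat f}=\{(y,\hat f(y)) : y\in\F_{q^n}\}$ contain no nonzero vector with first coordinate zero. Every remaining point has the form $\langle(1,\alpha)\rangle_{\F_{q^n}}$ for a unique $\alpha\in\F_{q^n}$, and directly from the definition of $U_f$ we have $\langle(1,\alpha)\rangle_{\F_{q^n}}\in L_f$ precisely when there exists $x\in\F_{q^n}^*$ with $f(x)=\alpha x$; that is, precisely when $f(X)-\alpha X$ has a nontrivial kernel on $\F_{q^n}$.

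The central ingredient I would isolate as a short lemma: for every $g\in\tilde{\mathscr{L}}_{n,q}[X]$, $g$ is a bijection on $\F_{q^n}$ if and only if $\hat g$ is. This follows from the defining identity $\mathrm{Tr}_{q^n/q}(g(x)y)=\mathrm{Tr}_{q^n/q}(x\hat g(y))$ for all $x,y\in\F_{q^n}$ together with the non-degeneracy of the trace form: if $g(x_0)=0$ for some $x_0\neq 0$, then $\mathrm{Tr}_{q^n/q}(x_0\hat g(y))=0$ for every $y\in\F_{q^n}$, so $\mathrm{im}\,\hat g$ is contained in the $\F_q$-hyperplane $\{z : \mathrm{Tr}_{q^n/q}(x_0 z)=0\}$ and $\hat g$ is not surjective; the converse follows from $\hat{\hat g}=g$. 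Equivalently, this is the invariance of the rank of a Dickson matrix under transposition (cf.\ \cite[Proposition 4.4]{revisited}).

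Applying the lemma to $g=f-\alpha X$ and using $\widehat{f-\alpha X}=\hat f-\alpha X$ (which follows from the $\F_q$-linearity of the hat operation together with the one-line check $\widehat{\alpha X}=\alpha X$ after reduction modulo $X^{q^n}-X$), I obtain $\langle(1,\alpha)\rangle_{\F_{q^n}}\in L_f$ iff $\langle(1,\alpha)\rangle_{\F_{q^n}}\in L_{\hat f}$, and therefore $L_f=L_{\hat f}$. The ``in particular'' clause is then immediate: $U_{\hat f}$ has $\F_q$-rank $n$, so $|L_{\hat f}|=|L_f|$ attains the maximum $(q^n-1)/(q-1)$ whenever $f$ is scattered, and for rank-$n$ subspaces that maximum cardinality is exactly the characterisation of scatteredness. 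The only nontrivial point is the bijectivity lemma for adjoints, but it is genuinely a one-line argument via the trace pairing, so no real obstacle is expected.
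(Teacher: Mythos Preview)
The paper does not include its own proof of this theorem; it merely cites \cite{BaGiuMaPo} and \cite[Lemma 3.1]{classes}. Your argument is correct and is essentially the standard one found in those references: the point $\langle(1,\alpha)\rangle_{\F_{q^n}}$ lies in $L_f$ if and only if $\rk(f-\alpha X)<n$, and since the adjoint operation preserves rank (equivalently, $\rk D_f=\rk D_f^t$ for the Dickson matrix) and satisfies $\widehat{f-\alpha X}=\hat f-\alpha X$, the same $\alpha$'s work for $\hat f$. Your trace-form derivation of the bijectivity lemma is a clean self-contained variant of the rank-preservation statement; nothing is missing.
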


Therefore, if $L=L_f$ is an $\Fq$-linear set such that the $U_f$ and $U_{\hat{f}}$ are not in the same $\GaL(2,q^n)$ orbit,  $c_{\GaL}(L)>1$ , see \cite[Section 3 and 4]{classes}.  
In \cite{Pepe}, Pepe studied the link of subspaces defining the same linear set, characterising those of rank $n$. This result has the following consequence for the maximum scattered ones of the projective line.

\begin{thm}\cite[Theorem 3.4]{GriGuLoTro}\label{thm:Gamma_L_class}
Let $L_f$ be a maximum scattered linear set of $\PG(1,q^n)$ with $f \in \tilde{\mathscr{L}}_{n,q}[X]$. If $W$ is an $n$-dimensional $\F_q$-subspace of $\F_{q^n}^2$ such that $L_{W}=L_f$, then one of the following possibilities occurs:
\begin{enumerate}
 \item $L_f$ is equivalent to a linear set of \textit{pseudoregulus type}
\begin{equation*}
 L^{1,n}_{s}=\{\langle (x, x^{q^s}) \rangle_{\F_{q^n}} \, \colon \, x \in \F^*_{q^n} \};
 \end{equation*}
\item $W= \lambda U_f$ for some $\lambda \in \F^*_{q^n}$;
\item $W= \lambda U_{\hat{f}}$ for some $\lambda \in \F_{q^n}^*$, where $\hat{f}$ is the adjoint of $f$.
\end{enumerate}  
In particular, if $L_f$ is not of pseudoregulus type  $c_{\GaL}(L_f) \leq 2$.
\end{thm}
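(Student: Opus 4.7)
The plan is to reduce the statement to Pepe's classification of $n$-dimensional $\Fq$-subspaces of $\Fqn^2$ that define a common linear set of $\PG(1,q^n)$, and then combine it with the equality $L_f=L_{\hat f}$ recalled immediately above.

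First I would normalize by a projectivity so that $\langle(0,1)\rangle_{\Fqn}\notin L_f$. This is admissible since $\PGL(2,q^n)$ acts $3$-transitively on $\PG(1,q^n)$ and all three alternatives of the conclusion are preserved under such a change of coordinates (the adjoint interacts with projective transformations via the commutative diagram of Subsection~1.2, and scalar multiples of $U_f$ are mapped to scalar multiples of the image subspace). Under this normalization, neither $U_f$ nor $W$ contains a vector $(0,y)$ with $y\ne 0$, so $W$ is the graph $U_g$ of a unique $g\in\tilde{\mathscr{L}}_{n,q}[X]$. Since $L_g=L_f$ attains the maximum size $(q^n-1)/(q-1)$, the subspace $U_g$ must be scattered as well, and equality of the two linear sets translates into the equality of slope sets
\[
\{f(y)/y : y\in\Fqn^*\}=\{g(x)/x : x\in\Fqn^*\},
\]
with each slope attained on a single $\Fq$-line of vectors in $U_f$ and in $U_g$ by scatteredness.

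At this point I would invoke Pepe's theorem, which asserts that the $\Fq$-subspaces of $\Fqn^2$ representing a given maximum scattered linear set of $\PG(1,q^n)$ are, up to $\GaL(2,q^n)$-equivalence, exhausted by scalar multiples of $U_f$ together with scalar multiples of a single canonical "dual" subspace, with the sole exception of the pseudoregulus case, where many additional orbits appear (in fact up to $\phi(n)/2$ of them, cf.\ \cite{csajbok_zanella}). The identity $L_f=L_{\hat f}$ stated just before the theorem identifies this canonical dual subspace as $U_{\hat f}$. Consequently, unless $L_f$ is of pseudoregulus type, any $n$-dimensional $W$ with $L_W=L_f$ must satisfy $W=\lambda U_f$ or $W=\lambda U_{\hat f}$ for some $\lambda\in\Fqn^*$, which is exactly the trichotomy of the statement.

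The "in particular" clause is then immediate: outside the pseudoregulus case, every subspace $W$ representing $L_f$ is $\GaL$-equivalent either to $U_f$ or to $U_{\hat f}$, so $c_{\GaL}(L_f)\le 2$. The main obstacle is Pepe's classification itself, whose proof rests on a delicate algebraic analysis via Dickson matrices (or, equivalently, via the associated rank-metric code $\cC_f$): one must show that, outside the highly symmetric pseudoregulus case, the only subspace-level symmetries compatible with $L_f$ are scalar multiplication and the adjoint involution, so that no third $\GaL$-orbit of representatives can arise. Once Pepe's theorem is available, the deduction reduces to the essentially bookkeeping steps described above.
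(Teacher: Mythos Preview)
Your approach matches what the survey indicates: the paper does not give a self-contained proof but presents the theorem as a consequence of Pepe's characterization of rank-$n$ subspaces defining the same linear set, together with the identity $L_f=L_{\hat f}$ recalled just before the statement. So at the level of strategy you are aligned with the paper.

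Two imprecisions are worth flagging. First, the normalization step is unnecessary: since $U_f=\{(x,f(x)):x\in\Fqn\}$ contains no nonzero vector of the form $(0,y)$, the point $\langle(0,1)\rangle_{\Fqn}$ is automatically excluded from $L_f$, and because $L_W=L_f$ the same holds for $W$; hence $W=U_g$ for some $g\in\tilde{\mathscr{L}}_{n,q}[X]$ without any projective change of coordinates. Second, your phrase ``up to $\GaL(2,q^n)$-equivalence'' in describing Pepe's output is a slip that, taken literally, would not yield the conclusion: the theorem asserts the genuine equalities $W=\lambda U_f$ or $W=\lambda U_{\hat f}$ (a statement about $\mathcal{Z}(\GaL)$-classes, not $\GaL$-orbits), and this is the form in which Pepe's result must be invoked. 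Relatedly, the identification of Pepe's ``second family'' with $\{\lambda U_{\hat f}:\lambda\in\Fqn^*\}$ is not a consequence of merely knowing $L_f=L_{\hat f}$ (that only tells you $U_{\hat f}$ lies in \emph{some} family); it is part of what Pepe's theorem actually delivers, so you should state Pepe's conclusion precisely rather than deduce the identification afterwards.
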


Note that even if the adjoint polynomial $\hat{f}$ of a scattered polynomial $f$ is scattered as well,  the rank-metric code $\cC_{\hat{f}}$ associated with $\hat{f}$ may not be equal nor equivalent to the adjoint code $\widehat{\cC_{f}}$ of $\cC_f$. 
Clearly, if $f_s^{1,n}=X^{q^s}$, with $\gcd(n,s)=1$, and considered $\cC_{s}^{1,n}:=\cC_{f_s^{1,n}}$, it is straightforward to check that
\begin{equation}\label{pseudo-code}
\cC_{\widehat{f^{1,n}_s}}=\cC^{1,n}_{n-s}=\cC_s^{1,n} \circ X^{q^{n-s}}=\widehat{\cC_{s}^{1,n}}.
\end{equation}

In the remainder of this section, we will show some new results for $\cC_f$ and $\cC_{\hat{f}}$ where $f \in \tilde{\mathscr{L}}_{n,q}[X]$ is a scattered polynomial.

\begin{lem} \label{eq-adjoint}
Let $f,g \in \tilde{\mathscr{L}}_{n,q}[X]$ be scattered polynomials. If $\cC_f$ and $\cC_g$ are equivalent, then  $\cC_{\hat{f}}$ and  $\cC_{\hat{g}}$ are equivalent.
\end{lem}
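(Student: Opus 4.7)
The plan is to invoke Sheekey's equivalence criterion, which the paper recalls just before the lemma: $\cC_f \cong \cC_g$ if and only if the $\F_q$-subspaces $U_f, U_g \subseteq \F_{q^n}^2$ lie in the same $\GaL(2,q^n)$-orbit. The task therefore reduces to transporting a $\GaL$-equivalence $U_f \to U_g$ into a $\GaL$-equivalence $U_{\hat f} \to U_{\hat g}$. The device I would use is a non-degenerate $\F_q$-bilinear pairing on $\F_{q^n}^2$ under which the orthogonal of $U_h$ is precisely $U_{\hat h}$, so that a duality argument produces the required $\GaL$-equivalence between the adjoint subspaces essentially for free.

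Concretely, I would introduce the $\F_q$-bilinear form
\[
B\bigl((x_1,y_1),(x_2,y_2)\bigr) \;=\; \mathrm{Tr}_{q^n/q}\bigl(x_1 y_2 - y_1 x_2\bigr)
\]
on $\F_{q^n}^2$ and verify by a one-line trace computation, using the defining property $\mathrm{Tr}_{q^n/q}(x\,h(y)) = \mathrm{Tr}_{q^n/q}(\hat h(x)\,y)$ of the adjoint polynomial, that $U_h^\perp = U_{\hat h}$ for every $h \in \tilde{\mathscr{L}}_{n,q}[X]$. Next, for $\psi=\begin{pmatrix} a & b \\ c & d \end{pmatrix} \in \GL(2,q^n)$, I would compute the $B$-adjoint $\psi^{\ast}$ and observe that
\[
\psi^{\ast} \;=\; \begin{pmatrix} d & -b \\ -c & a \end{pmatrix} \;=\; \det(\psi)\cdot\psi^{-1},
\]
so $\psi^\ast$ is again in $\GL(2,q^n)$. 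Standard orthogonality theory then yields $\psi(V)^\perp = (\psi^\ast)^{-1}(V^\perp)$ for any $\F_q$-subspace $V \subseteq \F_{q^n}^2$.

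To close the argument, let $(\psi,\rho) \in \GaL(2,q^n)$ realise the $\GaL$-equivalence $\psi(U_{f^\rho}) = U_g$ supplied by Sheekey. Every field automorphism $\rho$ of $\F_{q^n}$ is a power of the $p$-Frobenius, so it commutes with the adjoint operation on linearized polynomials; consequently $\widehat{f^\rho} = (\hat f)^\rho$ and $U_{f^\rho}^\perp = U_{\hat f}^\rho$. Taking $^\perp$ in the identity $\psi(U_{f^\rho}) = U_g$ therefore gives $(\psi^\ast)^{-1}\bigl(U_{\hat f}^\rho\bigr) = U_{\hat g}$, which means that $\bigl((\psi^\ast)^{-1},\rho\bigr) \in \GaL(2,q^n)$ sends $U_{\hat f}$ to $U_{\hat g}$. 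A second application of Sheekey's criterion then yields $\cC_{\hat f} \cong \cC_{\hat g}$. The single point that requires care — and the one place where a less careful argument would fail — is the check that $\psi^{\ast}$ lies in $\GL(2,q^n)$ rather than merely in the strictly larger group $\GL(2n,q)$: since $B$ is only $\F_q$-bilinear, $\F_{q^n}$-semilinearity of the adjoint is not automatic, and without it the output of the duality would be too weak to feed back into Sheekey's theorem. The explicit formula $\psi^{\ast}=\det(\psi)\,\psi^{-1}$ handles this cleanly.
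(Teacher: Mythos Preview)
Your proof is correct and follows the same overall route as the paper: reduce via Sheekey's criterion to a $\GaL(2,q^n)$-equivalence $U_{f^\sigma}\to U_g$, and then exhibit the matrix $\begin{pmatrix}-d&b\\ c&-a\end{pmatrix}$ (which is your $-\psi^{\ast}$) carrying $U_{\hat g}$ to $U_{\hat f^\sigma}$. The only difference is that the paper simply asserts this matrix works ``by a straightforward computation'', whereas you derive it conceptually from the symplectic duality $U_h^{\perp}=U_{\hat h}$; your framing thus explains \emph{why} that particular matrix appears.
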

\begin{proof}
Since $\cC_f$ and $\cC_g$ are equivalent, we have that $f$ and $g$ are $\GaL$-equivalent and hence  $f^\sigma$ is $\GL$-equivalent to $g$ for some automorphism $\sigma \in \Aut(\F_{q^n})$, \cite[Theorem 8]{Sheekey2016}.  Then, there exist $a,b,c,d\in\F_{q^n}$ with $ad-bc\ne 0$, such that for any $x \in \F_{q^n}$
\[
\begin{pmatrix}
a&b\\
c&d\\
\end{pmatrix}
\begin{pmatrix}
x\\
f^\sigma(x)
\end{pmatrix}=
\begin{pmatrix}
y\\
g(y)
\end{pmatrix}
\]
where $y \in \F_{q^n}$.
It is a straightforward computation to show that 
\[
\begin{pmatrix}
-d&b\\
c&-a\\
\end{pmatrix}
\begin{pmatrix}
x\\
\hat g(x)
\end{pmatrix}=
\begin{pmatrix}
y\\
\hat f^\sigma(y)
\end{pmatrix}.
\]
Hence, $\cC_{\hat{g}}$ is equivalent to $\cC_{\hat{f}^\sigma}$. Since $\cC_{\hat{f}^\sigma} \cong \cC_{\hat{f}}$, we get the result.
\end{proof}

%\begin{example} 
%\textnormal{Let $\psi:=\psi_{m,h,s} \in \tilde{\mathscr{L}}_{2t,q}$ be the scattered polynomial in \eqref{quad} with $m=h=s=1$, $t \geq 3$ and $q$ odd. This comes down to
%$$\psi= X^q+X^{q^{t-1}}-X^{q^{t+1}}+X^{q^{2t-1}}.$$
%Let suppose that $\cC_{\hat{\psi}}$ is equivalent to $ \hat{\cC}_{\psi}$. Then, by \cite[Propositions 4.1 and 4.2, Theorem 5.4 and Corollary 5.6]{Lunardon_Trombetti_Zhou},   we get
%\[\F_{q^2} \cong I_R(\cC_{\hat{\psi}}) \cong %I_R(\widehat{\cC_{\psi}})\cong \widehat{I_L(\cC_{\psi_t})}\cong \F_{q^{2t}},\]
%a contradiction.}
%\end{example}

\begin{prop}
If $f \in\tilde{\mathscr{L}}_{n,q}[X]$ is a scattered polynomial not $\GaL$-equivalent to a polynomial of
pseudoregulus type, then the adjoint code $\widehat{\cC_f}$ is not equivalent to $\cC_{\hat{f}}$.
\end{prop}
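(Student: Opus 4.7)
The plan is to argue by contradiction, using the cardinalities of the left idealizers as invariants of code equivalence. Suppose $\widehat{\cC_f}$ and $\cC_{\hat f}$ are equivalent. Since $f$ is scattered, its adjoint $\hat f$ is scattered as well (as recalled just before Lemma \ref{eq-adjoint}), so both codes are MRD with parameters $(n,n,q;n-1)$. By Proposition 3.1 of \cite{Lunardon_Trombetti_Zhou} cited in the preliminaries, equivalent rank-metric codes have isomorphic left idealizers, hence left idealizers of the same cardinality. The goal is to show that these two cardinalities differ.

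I will compute $|I_L(\cC_{\hat f})|$ and $|I_L(\widehat{\cC_f})|$ separately. For the first, since $\cC_{\hat f}=\langle X,\hat f\rangle_{\F_{q^n}}$ is stabilized under left composition by $\alpha X$ for every $\alpha\in\F_{q^n}$, the inclusion $\{\alpha X\colon \alpha\in\F_{q^n}\}\subseteq I_L(\cC_{\hat f})$ is immediate; by Theorem \ref{idealizers} the left idealizer embeds as a field into $\F_{q^n}$, so the inclusion must be an equality and $|I_L(\cC_{\hat f})|=q^n$. For the second, I use the matrix identity $(MC)^t=C^tM^t$, from which $I_L(\cC^t)=I_R(\cC)^t$ for any rank-metric code $\cC$, with the restriction of the transpose to the (commutative) field $I_R(\cC)$ being a field isomorphism onto $I_R(\cC)^t$. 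Translating through the commutative diagram relating $\Psi_{\mathscr{B}}$ and $\Psi_{\mathscr{B}^*}$ recalled in the excerpt, this yields the field isomorphism $I_L(\widehat{\cC_f})\cong I_R(\cC_f)$, realized polynomially as $\varphi\mapsto\hat\varphi$. Combining this with the identification $I_R(\cC_f)\cong G_f^\circ$ from Proposition \ref{le:full_auto_MRD}, and with the fact recalled in Section 2.1 that $G_f^\circ$ is a proper subfield of $\F_{q^n}$ whenever $f$ is not $\GaL$-equivalent to a pseudoregulus polynomial, I conclude $|I_L(\widehat{\cC_f})|=|G_f^\circ|<q^n=|I_L(\cC_{\hat f})|$, contradicting the equivalence.

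The main obstacle I expect is the verification of the identity $I_L(\widehat{\cC_f})\cong I_R(\cC_f)$. At the matrix level the fact that transposition swaps the two idealizers is routine from $(MC)^t=C^tM^t$; the transfer to the polynomial setting, however, must be performed via the pair of dual bases appearing in the commutative diagram of the excerpt, so one needs to check that the swap is preserved under the two isomorphisms $\Psi_{\mathscr{B}}$ and $\Psi_{\mathscr{B}^*}$. Once this is in place the argument is a bare comparison of field orders and uses no further structural information about $f$ beyond its not being of pseudoregulus type.
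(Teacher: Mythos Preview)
Your proof is correct and is essentially the mirror image of the paper's argument. Both proceed by contradiction via idealiser sizes and the transpose swap $I_L(\widehat{\cC})\cong I_R(\cC)$; the paper tracks \emph{right} idealisers (showing $I_R(\cC_{\hat f})$ is a proper subfield of $\F_{q^n}$ while $I_R(\widehat{\cC_f})\cong\widehat{I_L(\cC_f)}\cong\F_{q^n}$), whereas you track \emph{left} idealisers. Your route is marginally more economical: since $I_L(\cC_{\hat f})\cong\F_{q^n}$ is immediate from the $\F_{q^n}$-linearity of $\cC_{\hat f}$, you apply the ``proper subfield'' result (\cite[Proposition~3.6]{Longobardi_Zanella2024}) directly to $f$ via $I_R(\cC_f)\cong G_f^\circ$, avoiding the paper's preliminary step of first showing that $\hat f$ is not of pseudoregulus type (which requires Lemma~\ref{eq-adjoint} and \eqref{pseudo-code}). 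The verification of $I_L(\widehat{\cC_f})\cong I_R(\cC_f)$ that you flag is exactly what the paper cites from \cite[Propositions~4.1 and~4.2]{Lunardon_Trombetti_Zhou}, so no gap there.
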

\begin{proof}
    Suppose that $\cC_{\hat{f}}$ is equivalent to $ \widehat{\cC_{f}}$. Firstly,  $\cC_{\hat{f}}$ cannot be equivalent   to a code $\cC_{f^{1,n}_s}$. Indeed, if this is the case, by \eqref{pseudo-code} and Lemma \ref{eq-adjoint} we have that $f$ is equivalent to a polynomial of pseudoregulus type, against our hypothesis. Then, by \cite[Proposition 3.6]{Longobardi_Zanella2024}, $I_R(\cC_{\hat{f}})$ is a proper subfield $\F_{q^m}$ of $\F_{q^n}$ with $m \mid n$. By \cite[Propositions 4.1 and 4.2, Theorem 5.4 and Corollary 5.6]{Lunardon_Trombetti_Zhou},   we get
\[\F_{q^m} \cong I_R(\cC_{\hat{f}}) \cong I_R(\widehat{\cC_{f}})\cong \widehat{I_L(\cC_{f})}\cong \F_{q^{n}},\]
a contradiction.
\end{proof}

Let $h$ be  a scattered polynomial, it is straightforward to see that $\Delta_h=\Delta_{\hat{h}}$ and hence $r_h=r_{\hat{h}}$ (cf. Definition \ref{standard-form} ). In particular, we have that if $h$ is in standard form, then its adjoint polynomial $\hat{h}$ is in standard form as well.
 \begin{proposition}
Let $f$ be a scattered polynomial in $\tilde{\mathscr{L}}_{n,q}[X]$. Then, $I_R(\cC_f)$ is a field isomorphic to $I_R(\cC_{\hat{f}})$.
\end{proposition}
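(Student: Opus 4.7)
The plan is to reduce the claim to a counting statement. By Proposition~\ref{le:full_auto_MRD} applied to both $f$ and $\hat f$, one has $I_R(\cC_f) \cong G_f^\circ$ and $I_R(\cC_{\hat f}) \cong G_{\hat f}^\circ$ as fields; in particular $I_R(\cC_f)$ is a field, which already settles the first half of the statement. Since two finite fields of the same cardinality are isomorphic, it suffices to establish $|G_f^\circ| = |G_{\hat f}^\circ|$, equivalently $|G_f| = |G_{\hat f}|$.

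To produce a bijection $\tau : G_f \to G_{\hat f}$, I would specialise the computation contained in the proof of Lemma~\ref{eq-adjoint} to the case $g = f$, $\sigma = \mathrm{id}$. The displayed identities there then read: if $A = \begin{pmatrix} a & b \\ c & d \end{pmatrix}$ fixes $U_f$ set-wise, then
\[
\tau(A) := \begin{pmatrix} -d & b \\ c & -a \end{pmatrix}
\]
fixes $U_{\hat f}$ set-wise. One checks at once that $\det \tau(A) = ad - bc = \det A$, so $\tau(A) \in \GL(2,q^n)$, and that $\tau$ is self-inverse, since applying the rule twice returns $A$. Applying the same construction to $\hat f$ in place of $f$ (and using $\hat{\hat f} = f$) shows that $\tau$ maps $G_{\hat f}$ into $G_f$ as well, so $\tau$ is a genuine bijection $G_f \leftrightarrow G_{\hat f}$, yielding $|G_f^\circ| = |G_{\hat f}^\circ|$ and hence the required field isomorphism $I_R(\cC_f) \cong I_R(\cC_{\hat f})$.

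The main subtlety worth flagging is that $\tau$ is \emph{not} a group homomorphism, as a short calculation of $\tau(AB)$ versus $\tau(A)\tau(B)$ shows; consequently one cannot hope to transport the field structure along $\tau$ directly. The argument really does rely on Proposition~\ref{le:full_auto_MRD} to know \emph{a priori} that both $G_f^\circ$ and $G_{\hat f}^\circ$ carry field structures, so that the coincidence of cardinalities forces isomorphism as abstract finite fields and therefore as subfields of $\F_{q^n}$.
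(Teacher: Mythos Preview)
Your argument is correct and takes a genuinely different, more elementary route than the paper. The paper proceeds by case distinction on whether $f\in\mathscr{S}_{n,q}$: in the nontrivial case it passes to a standard form $h$ of $f$, uses that $r_h=r_{\hat h}$ together with \cite[Theorem 2.1]{GuLoTro} to identify both $I_R(\cC_h)$ and $I_R(\cC_{\hat h})$ explicitly with $\{\alpha X:\alpha\in\F_{q^{r}}\}$, and then transports this back via equivalence invariance of idealisers; the case $f\notin\mathscr{S}_{n,q}$ is handled by a short contrapositive. Your approach sidesteps the standard-form machinery entirely: you extract from the displayed matrix identity in Lemma~\ref{eq-adjoint} an involutive bijection $\tau:G_f\leftrightarrow G_{\hat f}$ (specialising to $g=f$, $\sigma=\mathrm{id}$, and using $\hat{\hat f}=f$ for the inverse direction), and then invoke Proposition~\ref{le:full_auto_MRD} plus uniqueness of finite fields of a given order. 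This is shorter and avoids the dependence on \cite{GuLoTro} and the standard-form theory; on the other hand, the paper's argument yields the extra information that the common field is precisely $\F_{q^{r_h}}$ when $f\in\mathscr{S}_{n,q}$, which your counting argument does not see. Your caveat that $\tau$ is not multiplicative is well observed and correctly explains why the field structure must come from Proposition~\ref{le:full_auto_MRD} rather than from $\tau$ itself.
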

\begin{proof}
Let $f$ be a scattered polynomial and first assume that it belongs to  $\mathscr{S}_{n,q}$. Since $f$ is $\mathrm{GL}$-equivalent to a polynomial $h$ in standard form, the code $\cC_f$ is equivalent to $\cC_{h}$. By \cite[Theorem 2.1]{GuLoTro} and Lemma \ref{eq-adjoint}, $\cC_{\hat{f}}$ is equivalent to $\cC_{\hat{h}}$, $r=r_h=r_{\hat{h}}$ and
$$I_R(\cC_h)= I_R(\cC_{\hat{h}})= \{\alpha X : \alpha \in \F_{q^r}\}.$$
By \cite[Proposition 3.1]{Lunardon_Trombetti_Zhou}, $I_{R}(\cC_f)  \cong \F_{q^r} \cong I_{R}(\cC_{\widehat{f}})$.
Now, let us assume that $f \not \in \mathscr{S}_{n,q}$. Then  $\hat{f}$ cannot belong to $\mathscr{S}_{n,q}$. Indeed, if this is the case then $\hat{f}$ is $\mathrm{GL}$-equivalent to a certain scattered polynomial $g$ in standard form. This implies that the field $I_R(\cC_f)$ is isomorphic to $I_R(\cC_{\hat{g}})=I_R(\cC_g)$ and this is isomorphic to a field $\F_{q^r}$ with $r=r_g>1$, a contradiction. Hence, the right idealisers $I_R(\cC_f)$ and $I_R(\cC_{\hat{f}})$ are both fields isomorphic to $\F_q$. This concludes the proof.
\end{proof}

\section{Classification and characterization results}

In this section, we will retrace the state of the art regarding the classification of maximum scattered linear sets of the projective line $\PG(1,q^n)$ and briefly mention some results concerning geometrical characterizations of those arising from the scattered polynomials listed in Subsection \ref{known-scattered-polynomials}.\\
Firstly, a maximum scattered $\F_2$-linear set in $\PG(1,2^n)$ is equivalent to a translation hyperoval in $\PG(2,2^n)$. Due to the famous classification result of translation hyperovals in Desarguesian planes by Payne \cite{payne_complete_1971}, we have the following result.
	\begin{lem}\label{le:Payne}
		Let $f$ be a scattered polynomial in $\mathscr{L}_{n,2}[X]$, then $f$ is equivalent to $X^{2^s}$ where  $\gcd(s,n)=1$.
	\end{lem}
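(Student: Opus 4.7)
The strategy is to exploit the correspondence between scattered $\mathbb{F}_2$-linear polynomials in $\mathscr{L}_{n,2}[X]$ and translation hyperovals in $\mathrm{PG}(2,2^n)$ asserted just before the lemma, and then invoke Payne's classification \cite{payne_complete_1971}. Given $f\in\mathscr{L}_{n,2}[X]$ (viewed modulo $X^{2^n}-X$), I would associate to it the point set
\[
H_f=\{\langle(1,t,f(t))\rangle_{\F_{2^n}} : t\in\F_{2^n}\}\cup\{\langle(0,1,0)\rangle_{\F_{2^n}},\langle(0,0,1)\rangle_{\F_{2^n}}\}\subset\PG(2,2^n).
\]
First I would verify that $H_f$ is a hyperoval precisely when $f$ is scattered. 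Three distinct affine points $(1,a,f(a)),(1,b,f(b)),(1,c,f(c))$ are collinear if and only if $(f(a)+f(b))/(a+b)=(f(a)+f(c))/(a+c)$; using additivity of $f$ and $\mathrm{char}=2$, this reads $f(u)/u=f(v)/v$ with $u=a+b$, $v=a+c$, and the scattered hypothesis (which over $\F_2$ says $\F_2$-linear dependence means equality) forces $u=v$, i.e.\ $b=c$. Collinearity involving $\langle(0,1,0)\rangle$ or $\langle(0,0,1)\rangle$ similarly reduces to injectivity of $f$, which is implied by being scattered. Since $f$ is additive, the group of affine translations $(x,y)\mapsto(x+a,y+f(a))$ stabilizes $H_f$, so it is a translation hyperoval.

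Next I would check that the construction $f\mapsto H_f$ is compatible with equivalence: two scattered polynomials $f,g\in\mathscr{L}_{n,2}[X]$ are $\GaL$-equivalent, meaning $U_f$ and $U_g$ lie in the same $\GaL(2,2^n)$-orbit, if and only if $H_f$ and $H_g$ are projectively equivalent in $\PG(2,2^n)$. The nontrivial direction is that any $\mathrm{P}\Gamma\mathrm{L}(3,2^n)$-equivalence between $H_f$ and $H_g$ must preserve the translation axis (the unique secant fixed pointwise by no non-trivial translation, which in our coordinates is $X_0=0$) and hence descends to a $\GaL(2,2^n)$-equivalence of the underlying additive sets $U_f,U_g$.

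Payne's theorem then states that every translation hyperoval of $\PG(2,2^n)$ is projectively equivalent to the Segre hyperoval arising from the monomial $X^{2^s}$ with $\gcd(s,n)=1$. Applying this to $H_f$ and pulling the equivalence back through the correspondence yields $f$ equivalent to $X^{2^s}$ for some $s$ with $\gcd(s,n)=1$, completing the proof.

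\textbf{Main obstacle.} The delicate point is the second step: ensuring that a projective equivalence of translation hyperovals actually restricts to a $\GaL$-equivalence of the scattered polynomials, rather than just some set-theoretic bijection. This requires the intrinsic characterization of the translation axis (so any equivalence preserves it) and a careful bookkeeping of how the collineation acts on the affine $\F_{2^n}^2$-structure. Once this compatibility is in hand, the lemma is immediate from Payne's classification.
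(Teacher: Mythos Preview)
Your approach is exactly the one the paper intends: it gives no proof beyond the sentence preceding the lemma, which asserts the correspondence with translation hyperovals and cites Payne. So the route is the same.

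Two small corrections are in order. First, your claim that ``injectivity of $f$ is implied by being scattered'' is false: for instance $f(X)=X^2+X\in\tilde{\mathscr L}_{n,2}[X]$ is scattered (since $f(x)/x=x+1$ is injective on $\F_{2^n}^*$) yet has $f(1)=0$. The scattered condition only forces $\dim_{\F_2}\ker f\le 1$. The fix is easy: since $L_f$ omits exactly two points of $\PG(1,2^n)$, apply a $\PGL(2,2^n)$-element sending them to $\langle(1,0)\rangle$ and $\langle(0,1)\rangle$; the resulting equivalent polynomial is bijective, and then your $H_f$ is a genuine hyperoval.

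Second, the obstacle you flag largely dissolves once you use the polynomial form of Payne's theorem rather than the hyperoval form. Payne actually proves that an additive permutation $f$ of $\F_{2^n}$ for which $H_f$ is a hyperoval must satisfy $f(X)=cX^{2^s}$ with $\gcd(s,n)=1$. From this, $U_f=\{(x,cx^{2^s})\}$ is carried to $U_{X^{2^s}}$ by $\mathrm{diag}(1,c^{-1})\in\GL(2,2^n)$, so the $\GaL$-equivalence is immediate and no delicate descent from $\PGaL(3,2^n)$ to $\GaL(2,2^n)$ is required.
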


Hence, every maximum scattered $\F_2$-linear set in $\PG(1,2^n)$ is of pseudoregulus type.\\
As seen in Subsection \ref{linearsets},  (canonical) $\Fq$-subgeometries of a finite projective space are examples of linear sets. An $\F_q$-subgeometry of $\PG(r-1,q^n)$ is a set projectively equivalent 
to the linear set of rank $r+1$ associated with the $\F_q$-subspace $\F_{q}^{r+1}$, i.e. the set of points with rational coordinates over $\Fq$.
These sets play a crucial role in the study of the characterization and the classification of maximum linear sets of the projective line. Their relevance  lies in the following result. 
\begin{thm} \cite{Limbos} and \cite[Theorem 2]{LuPo2004}. \label{t:lupo}
If $L=L_U$ is an $\Fq$-linear set of rank $\nu$ in $\Lambda=\PG(d,q^n)$
such that $\langle L\rangle=\Lambda$, then either $L$ is an $\F_q$-subgeometry of $\Lambda$ or there exists
a projective space $\PG(\nu-1,q^n)\supseteq \Lambda$, an $\Fq$-canonical subgeometry $\Sigma$
of $\PG(\nu-1,q^n)$, and a complement $\Gamma$ of $\Lambda$ in $\PG(\nu-1,q^n)$, such that $\Gamma\cap\Sigma=\emptyset$, and $L$ is the projection of $\Sigma$
from $\Gamma$ onto $\Lambda$. 
Conversely, any such projection is a linear set.
\end{thm}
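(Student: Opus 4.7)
The plan is to realise $L_U$ as the projective image of a canonical $\F_q$-subgeometry under an $\F_{q^n}$-linear surjection of vector spaces, and then to read off the geometric projection from the kernel. Concretely, fix an $\F_q$-basis $u_0,u_1,\dots,u_{\nu-1}$ of $U$ and introduce the auxiliary $\F_{q^n}$-vector space $\tilde V=\F_{q^n}^{\nu}$ with standard basis $e_0,\dots,e_{\nu-1}$. The assignment $e_i\mapsto u_i$ extends to an $\F_{q^n}$-linear map $\tilde\phi:\tilde V\to V$, which is surjective because $\langle L_U\rangle=\Lambda$ forces $\langle U\rangle_{\F_{q^n}}=V$. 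Set $\Sigma=L_{\F_q^{\nu}}\subseteq \PG(\tilde V,q^n)=\PG(\nu-1,q^n)$; this is a canonical $\F_q$-subgeometry of rank $\nu$.

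If $\nu=d+1$ then $\tilde\phi$ is an isomorphism and identifies $\Sigma$ with $L_U$, so $L$ is an $\F_q$-subgeometry of $\Lambda$. Assume from now on that $\nu>d+1$. Let $K=\ker\tilde\phi$, an $\F_{q^n}$-subspace of dimension $\nu-d-1$, and define $\Gamma=\PG(K,q^n)$, a subspace of $\PG(\nu-1,q^n)$ of dimension $\nu-d-2$. Because $u_0,\dots,u_{\nu-1}$ are $\F_q$-linearly independent in $V$, no non-zero vector of $\F_q^{\nu}$ can lie in $K$, so $K\cap \F_q^{\nu}=\{0\}$; equivalently $\Gamma\cap\Sigma=\emptyset$. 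Pick any $(d+1)$-dimensional $\F_{q^n}$-subspace $W$ of $\tilde V$ complementary to $K$: the restriction $\tilde\phi|_W:W\to V$ is an isomorphism, and using it we identify $\Lambda$ with $\PG(W,q^n)$, a complement of $\Gamma$ in $\PG(\nu-1,q^n)$.

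Under these identifications, the standard projection from $\Gamma$ onto $\Lambda$ sends a point $\langle v\rangle_{\F_{q^n}}\notin\Gamma$ to the unique point of $(\langle v\rangle_{\F_{q^n}}+K)\cap W$, which through $\tilde\phi|_W$ corresponds to $\langle\tilde\phi(v)\rangle_{\F_{q^n}}$. Applying this to the points of $\Sigma$ yields $\{\langle\tilde\phi(v)\rangle_{\F_{q^n}}:v\in(\F_q^{\nu})^*\}=\{\langle u\rangle_{\F_{q^n}}:u\in U^*\}=L_U$, so $L$ is the projection of $\Sigma$ from $\Gamma$ onto $\Lambda$, as claimed. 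For the converse, given such a projection data, realise the projection by a surjective $\F_{q^n}$-linear map $\pi:\tilde V\to V$ whose kernel corresponds to $\Gamma$; then $L=\{\langle\pi(v)\rangle_{\F_{q^n}}:v\in(\F_q^{\nu})^*\}=L_{\pi(\F_q^{\nu})}$, exhibiting $L$ as an $\F_q$-linear set.

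I expect the delicate part to be the case $\nu>d+1$: one must produce the enlarged projective ambient $\PG(\nu-1,q^n)$ from $\Lambda$, verify the disjointness $\Gamma\cap\Sigma=\emptyset$, and translate the $\F_{q^n}$-linear quotient $\tilde V\twoheadrightarrow V$ into a genuine geometric projection in a way that makes the equality between the image of $\Sigma$ and $L_U$ manifest. Everything else reduces to bookkeeping with bases, dimension counts, and the elementary fact that projections with fixed centre are induced by $\F_{q^n}$-linear maps.
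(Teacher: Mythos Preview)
The paper does not include a proof of this theorem; it is stated with attribution to \cite{Limbos} and \cite[Theorem 2]{LuPo2004} and then used as a tool. Your argument is correct and is essentially the standard one found in those references: lift $U$ to the canonical $\F_q$-subspace $\F_q^{\nu}$ of $\tilde V=\F_{q^n}^{\nu}$ via a choice of $\F_q$-basis, observe that the resulting $\F_{q^n}$-linear map $\tilde\phi:\tilde V\to V$ is surjective because $\langle L_U\rangle=\Lambda$, and read off the projecting configuration from $\ker\tilde\phi$. The key verifications---that $\ker\tilde\phi\cap\F_q^{\nu}=\{0\}$ (giving $\Gamma\cap\Sigma=\emptyset$) and that the induced projective map realises the projection from $\Gamma$---are handled correctly. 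There is nothing further to compare.
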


The subspaces $\Gamma$ and $\Lambda$ are also called {\it vertex} (or \textit{center}) and {\it axis} of the projection, respectively. 
We will call any such pair $(\Gamma,\Sigma)$ a \emph{projecting configuration} for the linear set $L_U$.  
Since the axis is immaterial, the properties of $L_U$ can be described in terms of such projecting configuration.
The corresponding projection map is denoted by $\wp_\Gamma$.\\
Clearly, the maximum scattered $\F_q$-linear sets of $\PG(1, q^2)$ are the Baer sublines.  Since subgeometries are simple linear sets, these have $\mathcal{Z}(\GaL)$-class one, cf. \cite[Theorem 2.6]{LavrauwVanderVoorde} and \cite[Section 25.5]{GGG}.

According to Theorem~\ref{t:lupo}, for $n \geq 3$, a maximum $\F_q$-linear set $L_U$ of the projective line can be seen embedded in $\PG(n-1,q^n)$, and it has as a vertex 
an $(n-3)$-dimensional projective subspace $\Gamma$ projecting a subgeometry $\Sigma$, 
$\Sigma\cap\Gamma=\emptyset$, on a line $\ell$ such that $\Gamma\cap\ell=\emptyset$.\\
The mutual position of $\Gamma$ and $\Sigma$ and the behavior of
the vertex $\Gamma$ under the action of the pointwise stabilizer 
$\mathbb G=\PGaL(n,q^n)_\Sigma$ of $\Sigma$ in $\PGaL(n,q^n)$
 have been the subjects of investigation in various papers \cite{classes,csajbok_zanella,pseudoregulus-type,GriGuLoTro, LavrauwVanderVoorde, LiLoZa, MZ2019, Zanella_Zullo}. This has allowed the classification of maximum scattered linear sets of the line for small values of the rank $n$, and the characterization of linear sets known so far   via the geometric properties of the projecting configuration.\\
 In \cite{csajbok_zanella}  it was proved that 
for some linear sets of $\PG(1,q^n)$ the projecting configuration is not unique up to collineations of $\PG(n-1,q^n)$. Indeed, the $\GaL$-class of a linear set may be rephrased via a group action property on its projection vertices, see \cite[Section 5.2]{classes}, \cite[Theorem 6 and 7]{csajbok_zanella} and \cite[Theorem 3.4]{Zanella_Zullo}. More precisely, let $\Aut(\Sigma)$ be the \textit{automorphism group} of a subgeometry $\Sigma$ in $\PG(n-1,q^n)$, i.e. the group made up of all the maps in $\PGaL(n,q^n)$ which commute with a generator $\sigma$ of $\mathbb{G}$. Then, the following result holds. 

 \begin{thm} \cite[Theorem 3.4]{Zanella_Zullo}  \label{geoclass} 
 Let $\Sigma$ be a canonical subgeometry and  $\ell$  be a line of $\PG(n-1,q^n)$. The  $\GaL$-class of a linear set $L=L_U \subset \ell$ is the number of orbits, under the action of $\Aut(\Sigma)$ of $(n - 3)$-subspaces $\Gamma$ of $\PG(n-1,q^n)$ disjoint from $\Sigma$ and from the line $\ell$ such that $\wp_{\Gamma}(\Sigma)$ is equivalent to $L$.
\end{thm}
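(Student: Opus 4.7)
The plan is to exhibit an explicit bijection between the $\GaL(2,q^n)$-orbits of $n$-dimensional $\F_q$-subspaces $U\subseteq\F_{q^n}^2$ with $L_U=L$ and the $\Aut(\Sigma)$-orbits of admissible vertices $\Gamma$, leveraging Theorem \ref{t:lupo} to translate between linear sets on $\ell$ and projecting configurations in $\PG(n-1,q^n)$. First I would fix the identifications: view the line $\ell$ as $\PG(V,q^n)$ for a $2$-dimensional $\F_{q^n}$-space $V$ and coordinates on $\PG(n-1,q^n)$ so that $\Sigma$ is the canonical subgeometry. Given an admissible vertex $\Gamma$, I would represent it as the kernel of an $\F_{q^n}$-linear surjection $\phi_\Gamma\colon\F_{q^n}^n\to V$, and define $U_\Gamma$ as the $\phi_\Gamma$-image of the $\F_q$-span of the coordinate basis of $\F_{q^n}^n$ (whose projectivisation is $\Sigma$). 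Because $L$ is maximum scattered, $\phi_\Gamma|_\Sigma$ is injective onto $\wp_\Gamma(\Sigma)$, giving $\dim_{\F_q}U_\Gamma=n$ and $L_{U_\Gamma}=\wp_\Gamma(\Sigma)$. A different choice of $\phi_\Gamma$ alters $U_\Gamma$ within a single $\GL(2,q^n)$-orbit; since we only assume $\wp_\Gamma(\Sigma)$ is \emph{equivalent} to $L$, a further $\GaL(2,q^n)$-adjustment yields $L_{U_\Gamma}=L$. Thus $\Gamma$ determines a well-defined $\GaL(2,q^n)$-orbit $[U_\Gamma]$, and by Theorem \ref{t:lupo} every $U$ with $L_U=L$ arises in this way.

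Next I would verify that the assignment $\Gamma\mapsto[U_\Gamma]$ descends to the $\Aut(\Sigma)$-orbit of $\Gamma$ and is injective on orbits. If $g\in\Aut(\Sigma)$ maps $\Gamma_1$ to $\Gamma_2$, then $g$ transports the projecting configuration $(\Gamma_1,\Sigma)$ onto $(\Gamma_2,\Sigma)$; choosing liftings so that $g$ is a semilinear isomorphism of $\F_{q^n}^n$, the map induced on the quotient $V$ realises a $\GaL(2,q^n)$-equivalence sending $U_{\Gamma_1}$ onto $U_{\Gamma_2}$, so $[U_{\Gamma_1}]=[U_{\Gamma_2}]$. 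Conversely, given $\psi\in\GaL(2,q^n)$ with $\psi(U_{\Gamma_1})=U_{\Gamma_2}$, I would construct $g\in\PGaL(n,q^n)$ by picking $\F_q$-bases of $U_{\Gamma_1}$ and $U_{\Gamma_2}$ matched by $\psi$, extending each to an $\F_{q^n}$-basis of $\F_{q^n}^n$ adapted to the respective $\Gamma_i$, and letting $g$ send one ordered basis to the other. Because the matching bases are $\F_q$-rational on the $\Sigma$-side, $g$ commutes with the Frobenius $\sigma$ generating $\mathbb{G}$, hence $g\in\Aut(\Sigma)$ and $g(\Gamma_1)=\Gamma_2$.

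The main obstacle will lie in this lifting step: promoting an abstract $\GaL(2,q^n)$-equivalence $\psi$ on $\ell$ to a collineation of the ambient $\PG(n-1,q^n)$ that simultaneously preserves $\Sigma$ setwise and maps $\Gamma_1$ onto $\Gamma_2$. The delicate points are to ensure that the block-triangular lift of $\psi$ (identity-like on the $\Sigma$-side, $\psi$-like on the $\ell$-side) is well-defined modulo the $\F_{q^n}^*$-scalar ambiguity of $\psi$, and that it genuinely centralises $\sigma$; this requires carefully tracking how the chosen lifts of $\Gamma_1$ and $\Gamma_2$ interact with the $\F_q$-rational structure of $\Sigma$, and is precisely where the hypothesis $\wp_{\Gamma_i}(\Sigma)\sim L$ (rather than equal) is absorbed into the $\GaL$-equivalence class.
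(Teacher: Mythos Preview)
The survey does not supply a proof of this theorem: it is quoted verbatim with the citation \cite[Theorem~3.4]{Zanella_Zullo} and no argument follows. So there is no in-paper proof against which to compare your proposal.

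Your strategy is nonetheless the natural one and is essentially what underlies the original references \cite{csajbok_zanella,Zanella_Zullo}: realise each admissible vertex $\Gamma$ as the kernel of the linear projection $\phi_\Gamma$ onto the $2$-space $V_\ell$ carrying $\ell$, set $U_\Gamma=\phi_\Gamma(\F_q^n)$, and show that $\Aut(\Sigma)$-equivalence of vertices matches $\GaL(2,q^n)$-equivalence of the $U_\Gamma$. Two small corrections are in order. First, the injectivity of $\phi_\Gamma|_{\F_q^n}$, and hence $\dim_{\F_q}U_\Gamma=n$, comes from the hypothesis $\Gamma\cap\Sigma=\emptyset$ (equivalently $W_\Gamma\cap\F_q^n=0$), not from scatteredness of $L$; scatteredness is what forces $\wp_\Gamma|_\Sigma$ to be injective on \emph{points}, a different statement. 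Second, since $\ell$ is a fixed line in the ambient space, $\phi_\Gamma$ is the canonical projection $\F_{q^n}^n\to V_\ell$ along $W_\Gamma$, so there is no ``choice of $\phi_\Gamma$'' to absorb. Your identification of the lifting step as the crux is correct; the cleanest route is to observe that for any semilinear lift $\tilde g$ of a putative $g$, the two surjections $\phi_{\Gamma_2}\circ\tilde g$ and $\psi\circ\phi_{\Gamma_1}$ onto $V_\ell$ have the same kernel $W_{\Gamma_1}$, and then to pin down $\tilde g$ by prescribing its values on the standard $\F_q$-basis so that it stabilises $\F_q^n$ and hence commutes with $\sigma$.
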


Assume that $(\Gamma,\Sigma)$ is a projecting configuration in $\PG(n-1,q^n)$ for a linear set $L_U$ and let $\sigma$ be a generator of $\mathbb G=\PGaL(n,q^n)_\Sigma$.
The \emph{rank} of a point $P\in\PG(n-1,q^n)$ with respect to 
the $\Fq$-subgeometry $\Sigma$ is
	\[ \rk P=1+\dim\langle P,P^{\sigma},P^{\sigma^2},\ldots,P^{\sigma^{n-1}}\rangle, \]
 and it is straightforward to see that the following assertions are equivalent:
\begin{itemize}
\item [$(i)$] $\wp_\Gamma(\Sigma)$ is scattered;
\item [$(ii)$] the restriction of $\wp_\Gamma$ to $\Sigma$ is injective;
\item[$(iii)$] every point of the projecting vertex $\Gamma$ has rank greater than two.
\end{itemize}

In the following, we will state a characterization result for the linear sets of pseudoregulus and LP type by means of some geometric properties of their vertices and classification results for a maximum linear set of the projective line $\PG(1,q^n)$ with $q>2$ and $n \in \{3,4,5\}$.

\begin{thm} \cite[Theorem 2.3]{pseudoregulus-type}  \label{pseudchar}
Let $\Sigma$ be a canonical subgeometry of $\PG(n-1, q^n)$, $q > 2$, $n \geq 3$. Assume that $\Gamma$ and $\ell$ are $(n-3)$-subspace and a line of $\PG(n-1, q^n)$, respectively, such that $\Sigma\cap\Gamma=\ell\cap\Gamma= \emptyset$. Then the following assertions are equivalent:
\begin{enumerate} 
    \item  the set $\wp_{\Gamma}(\Sigma)$ is a scattered $\mathbb{F}_q$-linear set of pseudoregulus type;
    \item  there exists a generator $\sigma$ of the group $\mathbb{G}$ such that $\dim(\Gamma\cap\Gamma^{\sigma})=n-4$; 
     \item there exists a point $P$ and a generator $\sigma$ of $\mathbb{G}$ such that $\rk P=n$, and
$$\Gamma=\langle P ,P^{\sigma},\ldots, P^{\sigma^{n-3}}\rangle.$$
\end{enumerate}

\end{thm}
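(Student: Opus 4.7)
The plan is to establish the triple equivalence via the cycle $(3)\Rightarrow(2)\Rightarrow(3)$ for the middle part and $(3)\Leftrightarrow(1)$ by explicit coordinate computations. The direction $(3)\Rightarrow(2)$ is a direct rank computation. Assuming $\rk P=n$, the $n$ points $P,P^\sigma,\ldots,P^{\sigma^{n-1}}$ are $\F_{q^n}$-linearly independent and span all of $\PG(n-1,q^n)$; thus $\Gamma=\langle P,\ldots,P^{\sigma^{n-3}}\rangle$ and $\Gamma^\sigma=\langle P^\sigma,\ldots,P^{\sigma^{n-2}}\rangle$ meet in $\langle P^\sigma,\ldots,P^{\sigma^{n-3}}\rangle$, which has projective dimension exactly $n-4$.

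For $(2)\Rightarrow(3)$, which I expect to be the main technical step, I would consider the decreasing chain
\[
\Gamma \supseteq \Gamma\cap\Gamma^\sigma \supseteq \Gamma\cap\Gamma^\sigma\cap\Gamma^{\sigma^2} \supseteq \cdots \supseteq \bigcap_{i=0}^{n-1}\Gamma^{\sigma^i}.
\]
The terminal term is $\sigma$-invariant, hence defined over $\F_q$; if it were non-empty it would meet the subgeometry $\Sigma$, contradicting $\Gamma\cap\Sigma=\emptyset$. Since each inclusion drops the projective dimension by at most one, and the total drop from $n-3$ down to $-1$ must be achieved in $n-2$ steps, every inclusion is strict by exactly one. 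In particular $\bigcap_{i=0}^{n-3}\Gamma^{\sigma^{-i}}$ is a single point $P$, and by construction each of $P,P^\sigma,\ldots,P^{\sigma^{n-3}}$ lies in $\Gamma$. To verify that these $n-2$ points actually span $\Gamma$ and that $\rk P=n$, I would argue by contradiction: a minimal linear dependence $P^{\sigma^k}\in\langle P,\ldots,P^{\sigma^{k-1}}\rangle$ propagates under $\sigma$ to force every power of $P$ to lie in a proper $\sigma$-invariant subspace, which again must intersect $\Sigma$, contradicting $\Gamma\cap\Sigma=\emptyset$.

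The direction $(3)\Rightarrow(1)$ is carried out by choosing coordinates adapted to the basis $(P,P^\sigma,\ldots,P^{\sigma^{n-1}})$. In this basis, $\sigma$ acts as the Frobenius combined with the cyclic shift of coordinates, so its fixed-point locus $\Sigma$ admits the canonical parameterization $\{\langle(x,x^q,\ldots,x^{q^{n-1}})\rangle: x\in\F_{q^n}^*\}$, and $\Gamma$ is spanned by the first $n-2$ standard basis vectors. Projecting onto the complementary line $\{X_0=\cdots=X_{n-3}=0\}$ yields
\[
\wp_\Gamma(\Sigma)=\{\langle(x^{q^{n-2}},x^{q^{n-1}})\rangle: x\in\F_{q^n}^*\}=\{\langle(y,y^q)\rangle: y\in\F_{q^n}^*\}=L_1^{1,n},
\]
the standard linear set of pseudoregulus type. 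The reverse implication $(1)\Rightarrow(3)$ is handled by reading this computation backwards: up to $\PGaL$-equivalence one may assume $\wp_\Gamma(\Sigma)=L_s^{1,n}$, and selecting a basis in which the pseudoregulus acquires its standard form forces $\Gamma$ to be spanned by $n-2$ coordinate axes forming a consecutive $\sigma$-orbit, thereby yielding the required rank-$n$ point $P$. The hypothesis $q>2$ enters here to exclude the degenerate setting of Lemma \ref{le:Payne}, where every maximum scattered linear set over $\F_2$ is automatically of pseudoregulus type and the distinguishing geometric criterion $\dim(\Gamma\cap\Gamma^\sigma)=n-4$ ceases to discriminate between configurations.
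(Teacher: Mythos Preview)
The paper does not prove this theorem: it is a survey, and the statement is quoted verbatim from \cite{pseudoregulus-type} with no argument supplied. There is therefore no ``paper's own proof'' to compare your proposal against.

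That said, your outline is broadly along the right lines, and in particular $(3)\Rightarrow(2)$ and $(3)\Rightarrow(1)$ are correct as written. Two places deserve tightening. In $(2)\Rightarrow(3)$, the assertion that each inclusion in your chain drops the dimension by at most one is not immediate (intersecting with the codimension-two subspace $\Gamma^{\sigma^{k+1}}$ could a priori drop by two); it follows by induction from the observation that $S_{k+1}=S_k\cap S_k^\sigma$ with both $S_k,S_k^\sigma\subseteq S_{k-1}^\sigma$, whence $\dim S_{k+1}\ge 2\dim S_k-\dim S_{k-1}$. Your step count is also off by one: there are $n-1$ inclusions but only $n-2$ units of dimension to lose, so a pure counting argument leaves one slack step. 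What actually forces every drop (up to step $n-2$) to be exactly one is that once $S_j=S_{j-1}$ the recurrence gives $S_k=S_{j-1}$ for all $k\ge j$, contradicting $S_{n-1}=\emptyset$.

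For $(1)\Rightarrow(3)$, ``reading the computation backwards'' is more delicate than you suggest. The hypothesis fixes the pair $(\Gamma,\Sigma)$, and you must show that \emph{this} $\Gamma$ has the stated form; but a given linear set can arise from several $\Aut(\Sigma)$-inequivalent vertices (this is exactly the $\GaL$-class phenomenon discussed around Theorem~\ref{geoclass}). One cannot simply move $\Gamma$ by a collineation to the standard coordinate subspace; one has to argue that every vertex producing a pseudoregulus-type projection already lies in the orbit of the standard one, which is the substantive content of the original Csajb\'ok--Zanella proof. Your remark about $q>2$ is not quite the point either: the restriction is needed because for $q=2$ the projection $\wp_\Gamma|_\Sigma$ is always injective regardless of the structure of $\Gamma$, so condition~(1) no longer constrains the vertex.
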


As consequence of the previous result, we obtain that a maximum  
$\F_q$-linear set of $\PG(1,q^3)$ has a rank three point of $\PG(2,q^3)$ as its projecting vertex. Hence, Condition 3. in the theorem above is always satisfied.

\begin{thm}
Any maximum scattered $\F_q$-linear set in $\PG(1,q^3)$ is of pseudoregulus type.
\end{thm}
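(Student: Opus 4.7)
The plan is to reduce the statement to the characterization Theorem \ref{pseudchar} (and Lemma \ref{le:Payne} for the exceptional case $q=2$). The key observation is that when $n=3$, the projecting vertex of any maximum scattered linear set $L$ of $\PG(1,q^3)$ is $0$-dimensional, so condition 3 of Theorem \ref{pseudchar} collapses to something essentially trivial.

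First I would split into cases according to $q$. If $q=2$, Lemma \ref{le:Payne} immediately gives that any scattered polynomial in $\mathscr{L}_{3,2}[X]$ is $\GaL$-equivalent to $X^{2^s}$ with $\gcd(s,3)=1$, which is of pseudoregulus type by definition, and we are done. So assume $q>2$.

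Next I would invoke Theorem \ref{t:lupo}. Since $L$ is maximum scattered of rank $\nu=n=3$ in $\PG(1,q^3)$, it cannot be a subgeometry of the line (a subgeometry of $\PG(1,q^3)$ has rank $2$, not $3$), so $L$ is the projection of a canonical $\F_q$-subgeometry $\Sigma$ of $\PG(\nu-1,q^n)=\PG(2,q^3)$ from a vertex $\Gamma$ of dimension $n-3=0$, i.e., from a single point $P=\Gamma$, with $\Gamma\cap\Sigma=\Gamma\cap\ell=\emptyset$, where $\ell$ is the axis. Because $L=\wp_\Gamma(\Sigma)$ is scattered, the equivalence recalled right after the definition of the rank of a point (the equivalence $(i)\Leftrightarrow(iii)$) forces every point of $\Gamma$ to have rank greater than $2$; since $\Gamma$ consists of the single point $P$, this means $\rk P=3=n$.

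The final step is to apply Theorem \ref{pseudchar} with this setup. Condition 3 of that theorem requires a point $P$ and a generator $\sigma$ of $\mathbb G$ with $\rk P=n$ and $\Gamma=\langle P,P^\sigma,\ldots,P^{\sigma^{n-3}}\rangle$. For $n=3$ the chain $\langle P,P^\sigma,\ldots,P^{\sigma^{n-3}}\rangle$ reduces to $\langle P\rangle$, and since $\Gamma$ itself is a single point with $\rk\Gamma=3$, taking $P=\Gamma$ and any generator $\sigma$ satisfies the condition automatically. By Theorem \ref{pseudchar}, $\wp_\Gamma(\Sigma)=L$ is a scattered $\F_q$-linear set of pseudoregulus type, concluding the argument.

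There is no real obstacle here; the proof is a direct specialization of the characterization to the smallest non-trivial case. The only subtlety worth noting is the split between $q=2$ and $q>2$, which arises because Theorem \ref{pseudchar} explicitly assumes $q>2$, and this gap is filled by Payne's classification of translation hyperovals via Lemma \ref{le:Payne}.
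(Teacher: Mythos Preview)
Your proposal is correct and follows exactly the approach the paper takes: observe that for $n=3$ the projecting vertex is a single point of rank three, so condition~3 of Theorem~\ref{pseudchar} is automatically satisfied. Your explicit treatment of the case $q=2$ via Lemma~\ref{le:Payne} is a welcome addition, since Theorem~\ref{pseudchar} indeed assumes $q>2$ and the paper's one-line argument glosses over this point.
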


This has been already obtained using that the stabilizer of a
subgeometry $\Sigma$ of $\PG(2, q^3)$ is transitive on the set of those points of $\PG(2,q^3) \setminus \Sigma$
which are incident with a line of $\Sigma$ and on the set of points of $\PG(2,q^3)$ not incident with any line
of $\Sigma$, see  \cite[Example 5.1, Section 5.2 and Remark 5.6]{classes} and \cite{LaVan}.\\
Since the $\F_q$-linear sets of
rank $4$ in $\PG(1, q^4)$ with maximum field of linearity $\F_q$ are simple \cite[Theorem 4.5]{classes}, in \cite{PG(1q4)} Csajb\'{o}k and Zanella proved the following classification result. 

\begin{thm}\cite[Theorem 3.4]{PG(1q4)}
 Any maximum scattered $\F_q$-linear set in $\PG(1,q^4)$ is of pseudoregulus type or LP type.
\end{thm}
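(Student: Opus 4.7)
The plan is to work with the projecting configuration provided by Theorem~\ref{t:lupo}: any maximum scattered $\F_q$-linear set $L$ of rank $4$ in $\PG(1, q^4)$ may be realized as $\wp_\Gamma(\Sigma)$, where $\Sigma$ is a canonical $\F_q$-subgeometry of $\PG(3, q^4)$ and $\Gamma$ is a line (the vertex, of dimension $n-3 = 1$) disjoint from $\Sigma$ and from the axis $\ell$. Let $\sigma$ denote a generator of the pointwise stabilizer $\mathbb{G} = \PGaL(4, q^4)_\Sigma$, which is cyclic of order $4$. Scatteredness of $L$ is equivalent to every point of $\Gamma$ having rank at least $3$ with respect to $\Sigma$. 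The case $q = 2$ is immediate from Lemma~\ref{le:Payne}, so I assume $q \geq 3$.

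The first step applies Theorem~\ref{pseudchar}: either $\dim(\Gamma \cap \Gamma^\sigma) = 0$, in which case $L$ is of pseudoregulus type and we are done; or $\Gamma$ and $\Gamma^\sigma$ are skew lines in $\PG(3, q^4)$ and together span the whole space. In the latter case I would examine the iterates $\Gamma^{\sigma^2}$ and $\Gamma^{\sigma^3}$ to pin down the vertex configuration. First exclude the possibility $\Gamma = \Gamma^{\sigma^2}$: if $\Gamma$ is $\sigma^2$-invariant and meets the subgeometry $\PG(3, q^2)$ fixed pointwise by $\sigma^2$, any such $\sigma^2$-fixed point $P \in \Gamma$ gives $\langle P, P^\sigma, P^{\sigma^2}, P^{\sigma^3}\rangle = \langle P, P^\sigma\rangle$, forcing $\rk P = 2$ and violating scatteredness. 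The residual case of a $\sigma^2$-invariant $\Gamma$ disjoint from $\PG(3, q^2)$ would need a separate argument to be ruled out.

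Once $\Gamma \neq \Gamma^{\sigma^2}$, I would show that the remaining admissible configuration, namely $\Gamma$ and $\Gamma^{\sigma^2}$ meeting in a single point, is precisely the vertex pattern of an LP-type linear set, giving $L$ of LP type. To clinch the identification I would invoke the simplicity of rank-$4$ linear sets in $\PG(1, q^4)$ (cited from \cite[Theorem 4.5]{classes}): by Theorem~\ref{thm:Gamma_L_class}, $U_f$ and $U_{\widehat{f}}$ lie in the same $\GaL(2, q^4)$-orbit, hence $f$ is $\GaL$-equivalent to its adjoint $\widehat{f}$. Writing $f = a_0 X + a_1 X^q + a_2 X^{q^2} + a_3 X^{q^3}$ and using the $\GL(2, q^4)$-action to absorb the linear term and normalize $a_1 = 1$ (passing to the adjoint if $a_1 = 0$), the constraints that $f$ be scattered and $\GaL$-equivalent to $\widehat{f}$ should force $a_2 = 0$, yielding the LP form $X^q + \delta X^{q^3}$ with $\mathrm{N}_{q^4/q}(\delta) \notin \{0, 1\}$ coming from scatteredness.

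The main obstacle is the middle step: ruling out exotic intermediate configurations (in particular, $\Gamma$ being $\sigma^2$-invariant yet disjoint from $\PG(3, q^2)$, or the four iterates $\Gamma, \Gamma^\sigma, \Gamma^{\sigma^2}, \Gamma^{\sigma^3}$ being pairwise skew), and then showing that the LP-vertex configuration actually forces an LP polynomial rather than admitting an unknown new family at rank $4$. This is essentially a bookkeeping exercise combining the bound $\rk P \geq 3$ for every $P \in \Gamma$ with the dimension formulas for the spans $\langle P, P^\sigma, P^{\sigma^2}, P^{\sigma^3}\rangle$ and the $\GaL$-simplicity constraint from \cite{classes}, but it is where all the structural work for $n=4$ concentrates.
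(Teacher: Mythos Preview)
The survey does not give its own proof of this theorem; it is quoted from \cite{PG(1q4)} and only the context (simplicity of rank-$4$ linear sets in $\PG(1,q^4)$, \cite[Theorem~4.5]{classes}) is mentioned. So there is no in-paper proof to compare against, and your proposal must stand on its own.

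Your geometric setup is the right one, and the dichotomy coming from Theorem~\ref{pseudchar} is correctly stated: for $n=4$ the vertex $\Gamma$ is a line, and the pseudoregulus case is exactly $\dim(\Gamma\cap\Gamma^\sigma)=0$. This is indeed the framework of \cite{PG(1q4)}. But what you have written is a plan with explicitly flagged holes, not a proof. Two of those holes are genuine obstacles rather than bookkeeping.

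First, the case analysis of the $\sigma$-iterates of $\Gamma$ when $\Gamma,\Gamma^\sigma$ are skew is the entire content of the theorem, and you have not done it. You acknowledge not handling the $\sigma^2$-invariant line disjoint from the $\F_{q^2}$-subgeometry, nor the fully skew configuration $\Gamma,\Gamma^\sigma,\Gamma^{\sigma^2},\Gamma^{\sigma^3}$. In \cite{PG(1q4)} this is resolved by a concrete coordinate analysis specific to $n=4$; the later LP-vertex characterisation (Theorem~\ref{LPcharct}, from \cite{Zanella_Zullo} and \cite{GriGuLoTro}) postdates \cite{PG(1q4)} and cannot be invoked without checking for circularity.

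Second, and more seriously, your proposed algebraic shortcut does not work. You argue: simplicity gives $U_f\sim_{\GaL}U_{\hat f}$, and then ``the constraints that $f$ be scattered and $\GaL$-equivalent to $\hat f$ should force $a_2=0$''. But by the very simplicity result you invoke, \emph{every} scattered $f$ in $\tilde{\mathscr{L}}_{4,q}[X]$ satisfies $f\sim_{\GaL}\hat f$. So this condition is automatic and carries no information beyond scatteredness itself; it cannot be used to eliminate $a_2$. You are left with having to prove directly that $X^q+a_2X^{q^2}+a_3X^{q^3}$ scattered forces, up to equivalence, $a_2=0$ --- which is exactly the theorem you are trying to prove.
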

\noindent Similarly to what was done for pseudoregulus type linear sets, in \cite{Zanella_Zullo} the authors characterized linear sets of LP type and introduced the notion of intersection number of a subspace with respect to a collineation fixing pointwise a canonical subgeometry. More precisely, let $\Gamma$ be a non-empty $k$-dimensional subspace of $\PG(n-1,q^n)$, the \textit{intersection number of $\Gamma$ with respect to $\sigma$}, denoted by $\ints$, is defined as the least positive  integer $\gamma$ satisfying 
\begin{equation*}
    \dim(\Gamma \cap \Gamma^\sigma \cap \ldots \cap \Gamma^{\sigma^\gamma}) > k -2 \gamma.
\end{equation*}

Clearly, the intersection number $\ints$ is invariant under the action of $\Aut(\Sigma)$. By Theorem \ref{pseudchar}, a maximum scattered linear set is of pseudoregulus type if and only if $\ints=1$ for some generator $\sigma$ of the group $\mathbb{G}$.  In \cite{GriGuLoTro}, by a slight  strengthening of \cite[Theorem 3.5]{Zanella_Zullo}, the following result that characterizes maximum scattered linear sets of LP type is stated.

\begin{thm} \cite[Theorem 3.7]{GriGuLoTro} \label{LPcharct} Let $L$ be a maximum scattered linear set in a line $\ell$ of $\PG(n-1,q^n)$, $n \geq 4$ and $q>2$. Then $L$ is a linear set of LP type if and only if for each $(n - 3)$-subspace $\Gamma$ of $\PG(n - 1, q^n)$ such that $L = \wp_{\Gamma}(\Sigma)$, the following holds:
\begin{itemize}
\item [$i)$] there exists a generator $\sigma$ of $\mathbb{G}$ such that $\ints= 2$;
\item [$(ii)$] if $P$ is the unique point of $\PG(n -1, q^n)$ such that
\begin{equation*}
    \Gamma = \langle P, P^\sigma,\ldots, P^{\sigma^{n-4}},Q \rangle,
\end{equation*}
then the line $\langle P^{\sigma^{n-1}}, P^{\sigma^{n-3}}\rangle$ meets $\Gamma$.
\end{itemize}
\end{thm}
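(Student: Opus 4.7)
The plan is to prove both implications by translating conditions (i) and (ii) back and forth between the projection map $\wp_\Gamma\colon\Sigma\to\ell$ and the scattered polynomial of $L$, while using Theorem~\ref{thm:Gamma_L_class} to restrict attention to a very small number of vertex orbits.

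For the forward direction, I would start from the LP polynomial $f=X^{q^s}+\delta X^{q^{n-s}}$ and, in $\PG(n-1,q^n)$, fix a canonical subgeometry $\Sigma$, a generator $\sigma$ of $\mathbb{G}$, and an $(n-3)$-subspace $\Gamma_0$ complementary to $\ell$ such that $\wp_{\Gamma_0}(\Sigma)=L_f$. With $\Gamma_0$ written in explicit coordinates adapted to the exponents $q^s$ and $q^{n-s}$, a direct computation of $\dim(\Gamma_0\cap\Gamma_0^\sigma)$ and $\dim(\Gamma_0\cap\Gamma_0^\sigma\cap\Gamma_0^{\sigma^2})$ should show that the first drops to $n-5$ (so $\ints\neq 1$) while the second remains at least $n-6$, yielding (i). Identifying the unique rank-$n$ point $P$ forcing the chain $P,P^\sigma,\ldots,P^{\sigma^{n-4}}$ to lie in $\Gamma_0$, and then computing the line $\langle P^{\sigma^{n-1}},P^{\sigma^{n-3}}\rangle$, verifies (ii). To pass from this single vertex to an arbitrary $\Gamma$ with $\wp_\Gamma(\Sigma)=L$, I would invoke Theorem~\ref{thm:Gamma_L_class}: since $L$ is not of pseudoregulus type, up to $\Fqn^*$-scaling the only other subspace giving $L$ comes from $U_{\hat{f}}$, and $\hat{f}$ is again of LP form with $s$ replaced by $n-s$, so the same calculation applies; both (i) and (ii) are $\Aut(\Sigma)$-invariant, and the result propagates to every $\Gamma$ yielding $L$.

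For the reverse direction, I would fix a $\Gamma$ satisfying (i) and (ii) and deduce that $L=\wp_\Gamma(\Sigma)$ is LP. Scattered-ness of $L$ forces every point of $\Gamma$ to have rank at least three, and $\ints=2$ together with this produces the structural decomposition $\Gamma=\langle P,P^\sigma,\ldots,P^{\sigma^{n-4}},Q\rangle$ with $P$ the unique rank-$n$ point appearing there; this step simultaneously excludes the pseudoregulus configuration, because Theorem~\ref{pseudchar} would then give $\ints=1$. Condition (ii) then forces $Q$ to lie on the line joining two specific Frobenius conjugates of $P$ modulo the chain, and translating this back through the projection should show that, in a coordinate system adapted to $P$, the scattered polynomial defining $L$ has only the two monomials $X^{q^s}$ and $X^{q^{n-s}}$ with nonzero coefficients. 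This is exactly the LP shape $f^{2,n}_s$, and the normalization $\mathrm{N}_{q^n/q}(\delta)\notin\{0,1\}$ follows from the scattered hypothesis.

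The hard part will be the book-keeping in the reverse direction: extracting from the geometric line-meeting condition (ii) the precise fact that the surviving monomials are a symmetric pair $\{X^{q^s},X^{q^{n-s}}\}$, and ruling out any extra term or an asymmetric exponent pair. The forward direction, by contrast, reduces to a basis-level computation once the correct $\Gamma_0$ is written down, together with the symmetry $s\leftrightarrow n-s$ that handles the second $\GaL$-orbit of vertices coming from $U_{\hat{f}}$.
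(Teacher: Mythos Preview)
The paper does not prove this theorem: it is stated with a citation to \cite[Theorem 3.7]{GriGuLoTro} and no argument is given in the present text, since this is a survey. There is therefore no ``paper's own proof'' to compare your proposal against.

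That said, your overall strategy is consistent with the surrounding material. In particular, the reduction from an arbitrary vertex $\Gamma$ to the two orbits coming from $U_f$ and $U_{\hat f}$ is justified by the combination of Theorem~\ref{geoclass} (which identifies the $\GaL$-class with the number of $\Aut(\Sigma)$-orbits of admissible vertices) and Theorem~\ref{thm:Gamma_L_class} (which, for a non-pseudoregulus maximum scattered linear set, bounds that class by two). Your observation that the adjoint of an LP polynomial is again of LP form is correct and is exactly what makes the forward direction close. One point to be careful about in the reverse direction: the existence and \emph{uniqueness} of the point $P$ with $\Gamma=\langle P,P^\sigma,\ldots,P^{\sigma^{n-4}},Q\rangle$ is part of the structure produced by $\ints=2$ together with scatteredness, and in the original source this is established as a separate lemma before condition~(ii) is even stated; you should not treat it as automatic. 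The translation of condition~(ii) into the vanishing of all but the two symmetric monomials is indeed the crux, and your sketch correctly isolates it as the step requiring real work.
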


Also in \cite{LiLoZa,MZ2019}, in order to classify maximum scattered $\F_q$-linear sets of $\PG(1,q^5)$, the geometric properties of their projecting vertex in $\PG(4,q^5)$ are investigated.
In this case, every maximum scattered linear set is the projection of an $\Fq$-subgeometry $\Sigma$ of $\PG(4,q^5)$ from a plane $\Gamma$ external to the secant variety to $\Sigma$ (see \cite{nonlinear}, \cite{lavrauw} and \cite{lavrauw2}  for the notion of secant variety in this context).\\
Let $\sigma$ be a generator of $\mathbb G=\PGaL(5,q^5)_\Sigma$, and let $A=\Gamma\cap\Gamma^{\sigma^4}$,
$B=\Gamma\cap\Gamma^{\sigma^3}$.
By Theorem \ref{pseudchar}, if $A$ and $B$ are not both points, then the projected linear set is of pseudoregulus type. 
If they are both points and at least one of them has rank $5$, then the associated maximum scattered linear set must be of LP type, see \cite[Theorem 6.1]{LiLoZa}.
So, if a maximum scattered linear set of a new type exists, it must be such that $\rk A=\rk B=4$. In \cite[Section 6]{LiLoZa}, the authors derived that any maximum scattered $\F_q$-linear set in $\PG(1,q^5)$ is, up to equivalence in $\PGaL(2,q^5)$, one of the following:
\begin{enumerate}
\item [(C1)] pseudoregulus type $L_{s}^{1,5}$;
\item [(C2)] LP type $L_{s,\delta}^{2,5}$;
\item [(C3)]
$L_{\eta,\rho}=\{\langle(\eta(x^q-x)+\mathrm{Tr}_{q^5/q}(\rho x), x^q-x^{q^4})\rangle_{\F_{q^5}}\colon x\in\F_{q^5}^*\}$ where
$\eta,\rho \in \F_{q^5}$, $\eta \neq 0$ and $\mathrm{Tr}_{q^5/q}(\eta)=0\neq\mathrm{Tr}_{q^5/q}(\rho)$;
\item [(C4)]$L_\xi=\{\langle(x,\xi(x^{q}+x^{q^3})+x^{q^2}+x^{q^4})\rangle_{\F_{^5}}\colon x\in\F_{q^5}^*\}$ with $\mathrm{N}_{q^5/q}(\xi)=1$.
\end{enumerate}
The classes of sets of types (C3) and (C4) might be empty. Indeed, as an exhaustive analysis by computer showed,  no  maximum scattered linear sets exist with $\rk A= \rk B=4$ for $q \leq 25$.\\

Finally, in the spirit of \cite{csajbok_zanella, Zanella_Zullo}, in \cite{GriGuLoTro} the authors characterized the other families of maximum scattered linear sets of $\PG(1,q^n)$ arising from the scattered polynomials listed in Subsection \ref{known-scattered-polynomials}. This was achieved by studying specific subspaces that intersect the projecting vertices, yielding characterization results similar to Theorems \ref{pseudchar} and \ref{LPcharct} mentioned above.

\section{Translation planes arising from scattered polynomials}

In this section, we will explore a different property of scattered polynomials; precisely, a connection they have with  translation planes. In order to ensure that the survey is as self-contained as possible, we will first introduce the necessary notions and definitions.\\
A \emph{(partial) planar spread} of a finite $2n$-dimensional vector space $V$ is a (partial) $n$-spread $\mathcal{B}$ of 
 $V$. As seen in Subsection \ref{linearsets}, the \emph{Desarguesian spread} of the $2n$-dimensional $\Fq$-vector space $\F_{q^n}^2$
\[
  \mathcal{D}=\left\{\langle \textbf{v}\rangle_{\F_{q^n}}\colon \textbf{v}\in \left (\F_{q^n}^2 \right )^*\right\}
\]
is a planar spread of $\F_{q^n}^2$.\\
A finite (right) \emph{quasifield} is an ordered triple $(Q,+,\circ)$, where $Q$ is a  finite set
\begin{itemize}
  \item [$(i)$]  $(Q,+)$ is an abelian group, 
  \item [$(ii)$] $(Q^*,\circ)$ is a loop,
  \item [$(iii)$]  $\forall x,y,m\in Q$, \, $(x+y)\circ m=x\circ m+y\circ m$,
\item [$(iv)$] $ \forall a,b,c\in Q$ with $a\neq b$,  $\exists !\, x \in Q$ such that $x\circ a=x\circ b+c$.
\end{itemize}
The \emph{kernel} of the quasifield  $(Q,+,\circ)$ is
\begin{gather*}
  K(Q)=\{k\in Q\colon k\circ(x+y)=k\circ x+k\circ y\mbox{ and }
  k\circ(x\circ y)=(k\circ x)\circ y\\ \mbox{ for any }x,y\in Q\}.
\end{gather*}
The kernel of $Q$ is a finite field and $Q$ is a left vector space over $K(Q)$, see
\cite{An54} and \cite[Chapter 1]{Kn95}.
A  quasifield satisfying the right distributive property is called a \emph{semifield}; an associative quasifield is called a
\emph{nearfield}.
Let us define $B_\infty=\{0\}\times Q$, and
\[
  B_m=\{(x,x\circ m)\colon x\in Q\}\mbox{ for }m\in Q.
\]
Then, $\cB(Q)=\{B_m\colon m\in Q\cup\{\infty\}\}$ is a planar spread of the $K(Q)$-vector space
$Q^2$.
Conversely, for any planar spread $\cB$ a quasifield $Q$ exists such that $\cB=\cB(Q)$.

Let $\cB$ be a planar spread of  $V$.
Other planar spreads can be constructed by using the following technique of \emph{net replacement}.
If $\cB'$ and $\cB''$ are distinct partial planar spreads of $V$ such that
$\cB'\subseteq\cB$, and
\[
  \bigcup_{B'\in\cB'}B'=\bigcup_{B''\in\cB''}B'',
\]
then $(\cB\setminus\cB')\cup\cB''$ is a planar spread of $V$.
%Such $\cB'$ is called \emph{replaceable (or derivable) net}.
If $|\cB'|=|\cB''|=(q^n-1)/(q-1)$,
the partial spreads $\cB'$ and $\cB''$ are called \emph{hyper-reguli} \cite[Chapter 19]{BiJhJo07}.
Such hyper-reguli are also called \emph{replacement set} of each other.

The \emph{translation plane $\ptra(\cB)$ associated with the planar spread} $\cB$ of  a finite vector space $V$
is the plane whose points are the elements of $V$  and whose
lines are the cosets of the elements of $\cB$ in the group $(V,+)$.
Let $Q$ be a quasifield, then the \emph{translation plane $\ptra(Q)$ associated with $Q$}  is the translation plane $\ptra(\cB(Q))$ associated with $\cB(Q)$. Any line 
is represented by the equations of type $X=b$ and $Y=X\circ m+b$ with $m,b\in Q$.
It is easy to see that if $\ptra(Q)$ and $\ptra(Q')$ are isomorphic
translation planes associated with the quasifields $Q$ and $Q'$ respectively, then $K(Q)$ and $K(Q')$ are isomorphic fields.
Hence the kernel of $Q$ is a geometric invariant of the plane $\ptra(Q)$,
also called the \emph{kernel of the translation plane $\ptra(Q)$}, see \cite{BiJhJo07,Kn95} and all the references therein.
A class of the most investigated translation planes in the litereature is that of Andr\'{e} $q$-plane. More precisely, an \emph{Andr\'e $q$-plane} is associated with the planar spread of $\F_{q^n}^2$ obtained from
$\mathcal{D}$ by replacing each partial spread of type
\[
  \cB'_{\xi}=\{\langle(1,m)\rangle_{\Fqn}\colon m\in \F_{q^n}  \text{ with } \mathrm{N}_{q^n/q}(m)=\xi\}, \qquad \xi\in\Fq^*
\]
with
\[
  \cB''_{\mu(\xi)}=\left\{\{ (x,x^{\mu(\xi)}m) \colon x\in\Fqn\}\colon \mathrm{N}_{q^n/q}(m)=\xi\right\},
\]
where $\mu:\Fq^*\longrightarrow \mathrm{Gal}(\Fqn \vert \F_q)$ is a map.
Any $\cB'_{\xi}$ is called \emph{Andr\'e $q$-net} and if $\mu$ is the map $\xi \mapsto \xi^{q^s}$,
the partial spread $\cB''_{\mu(\xi)}$ is called an \emph{Andr\'e $q^s$-replacement}
\cite[Definition 16.1 and 16.3]{BiJhJo07}.

In \cite{Lunardon_Polverino}, Lunardon and Polverino showed that the pseudoregulus linear set,  contained in $\mathcal{D}$,  is a hyperregulus of $\F_{q^n}^2$ and it leads to a net replacement.
In \cite{Casarino_Longobardi_Zanella}, the authors showed that this argument can be applied to
any scattered $\Fq$-linear set of maximum rank in $\PG(1,q^n)$.\\
Indeed, by the $3$-transitivity of $\PGL(2, q^n )$  on the points of the finite projective line $\PG(1, q^n )$ and since the size of an $\Fq$-linear set of $\PG(1, q^n)$ is at most $(q^n - 1)/(q - 1)$, if
$q > 2$, then
any linear set of maximum rank is projectively equivalent to an $L_U$ such that
\begin{equation}\label{no3points}
\langle(1,0)\rangle_{\Fqn},\langle(0,1)\rangle_{\Fqn},\langle(1,1)\rangle_{\Fqn} \not \in  L_U.
\end{equation}
Hence,  there exists a linearized polynomial $f \in \tilde{\mathscr{L}}_{n,q}[X]$ such that $U=U_f$ and $\ker f=\{0\}=\ker(f-X)$.
If $f \in \tilde{\mathscr{L}}_{n,q}[X]$,  we will denote $\mathcal{D}_f=\{f(x)/x\colon x\in\F_{q^n}^*\}$ the set of the
non-homogeneous projective coordinates of the points belonging to the set $L_f$. Hence, by \eqref{no3points}, $0,1\notin \mathcal{D}_f$.\\
As the next result states, from each scattered polynomial a quasifield arises.

\begin{thm}\cite[Proposition 3.1]{Casarino_Longobardi_Zanella}\label{t:quasicorpo}
Let $f \in \tilde{\mathscr{L}}_{n,q}[X]$, $q >2$, be a scattered polynomial such that $0,1 \not \in \mathcal{D}_f$. Let $Q_f=\Fqn$ be endowed with the sum of $\Fqn$, and define
\begin{equation}\label{e:quasicorpo}
x\circ m=\begin{cases}
xm&\mbox{ if }m\notin \mathcal{D}_f,\\
h^{-1}f(hx)&\mbox{ if }m\in \mathcal{D}_f\mbox{ and }f(h)-mh=0,\ h\neq0
\end{cases}
\end{equation}
for any $x,m\in Q_f$.
Then $(Q_f,+,\circ)$ is a quasifield, and $K(Q_f)=\Fq$.
\end{thm}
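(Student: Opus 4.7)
My plan is to realize $(Q_f,+,\circ)$ as the quasifield coordinatizing the translation plane $\ptra(\cB)$ associated with a planar spread $\cB$ of $\Fqn^2$ obtained by replacing the partial Desarguesian spread that covers $L_f$ by the $\Fqn^*$-orbit of $U_f$, and then to compute $K(Q_f)$ by a direct linearity argument exploiting $q>2$ and the scattering of $f$.

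I introduce the candidate spread
\[
 B_\infty=\{0\}\times\Fqn,\quad B_m=\langle(1,m)\rangle_{\Fqn}\ \text{ for }\ m\in\Fqn\setminus\mathcal{D}_f,\quad B_m=h^{-1}U_f\ \text{ for }\ m=f(h)/h\in\mathcal{D}_f,
\]
and set $\cB=\{B_\infty\}\cup\{B_m:m\in\Fqn\}$. The well-definedness of $B_m$ for $m\in\mathcal{D}_f$ uses scatteredness (any two $h$'s with $f(h)/h=m$ are $\F_q^*$-proportional) together with the $\F_q^*$-invariance of $U_f$. Each $B_m$ is an $\F_q$-subspace of dimension $n$, and $|\cB|=1+|\Fqn\setminus\mathcal{D}_f|+|\mathcal{D}_f|=q^n+1$. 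The delicate step is pairwise disjointness: if $(x,y)\ne(0,0)$ lies in both $h^{-1}U_f$ and $h'^{-1}U_f$ with $m=f(h)/h\ne f(h')/h'=m'$, the relations $hy=f(hx)$ and $h'y=f(h'x)$ yield, for $x\ne 0$, $f(hx)/(hx)=y/x=f(h'x)/(h'x)$; scatteredness then forces $hx,h'x$ to be $\F_q$-linearly dependent, so $h\in\F_q^*h'$, contradicting $m\ne m'$. The remaining cases (old$\cap$new, old$\cap$old, anything$\cap B_\infty$) are handled by the same template, using $m\notin\mathcal{D}_f$, the Desarguesian partition, and the bijectivity of $f$ ensured by $0\notin\mathcal{D}_f$. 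The equality $|\cB|(q^n-1)=q^{2n}-1$ then makes $\cB$ a planar spread of $\Fqn^2$.

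Direct inspection of \eqref{e:quasicorpo} shows that $(x,x\circ m)\in B_m$ for all $x,m\in\Fqn$ and $(0,y)\in B_\infty$. Since $1\notin\mathcal{D}_f$, the four points $(0,0),(1,0),(0,1),(1,1)$ form a quadrangle of $\ptra(\cB)$, and the standard coordinatization of a translation plane by a planar spread (cf.\ \cite{Kn95} or \cite[Chapter 5]{BiJhJo07}) produces a quasifield whose addition is that of $(\Fqn,+)$ and whose multiplication coincides with $\circ$; axioms (i)--(iv) then hold automatically. For the kernel, the inclusion $\F_q\subseteq K(Q_f)$ is immediate from the $\F_q$-linearity of $f$: if $k\in\F_q$ then $k\circ m=km$ for every $m\in\Fqn$, and both left distributivity and left associativity reduce to the field axioms. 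Conversely, let $k\in K(Q_f)$ and set $L_k(x)=k\circ x$. Then $L_k$ is additive and coincides with multiplication by $k$ on $\Fqn\setminus\mathcal{D}_f$; since $q>2$, a short counting (writing $q=p^e$) gives $|\Fqn\setminus\mathcal{D}_f|>p^{ne-1}$, so this set cannot lie inside any maximal $\F_p$-subgroup of $\Fqn$ and generates $\Fqn$ as an abelian group, forcing $L_k(x)=kx$ throughout $\Fqn$. Applied to $x=f(h)/h\in\mathcal{D}_f$ this identity becomes $f(hk)=kf(h)$ for every $h\ne 0$, so $f$ is $\F_q(k)$-linear. If $k\notin\F_q$ then $\F_q(k)=\F_{q^r}$ with $r>1$, and the $\F_{q^r}$-orbit of any $(h,f(h))$ would lie inside $U_f\cap\langle(h,f(h))\rangle_{\Fqn}$ with $\F_q$-dimension $r\geq 2$, contradicting scatteredness. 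Hence $K(Q_f)=\F_q$.

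The main obstacle is the pairwise disjointness in the spread construction. The new blocks $h^{-1}U_f$ are merely $\F_q$-subspaces (not $\Fqn$-subspaces), leaving ample room for non-trivial intersections; both the scattering hypothesis and the labelling $m=f(h)/h$ are used in an essential way to rule these out. Once $\cB$ is known to be a planar spread, the quasifield axioms become a formal consequence of classical translation-plane theory, and the kernel computation is a short counting-plus-linearity argument crucially relying on $q>2$.
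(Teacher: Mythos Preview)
Your argument is correct. The survey itself does not prove this statement---it is quoted from \cite{Casarino_Longobardi_Zanella}---but immediately after the theorem the paper spells out the spread $\cB_f=\cB(Q_f)$ exactly as you construct it (the Desarguesian lines $\langle(1,m)\rangle_{\Fqn}$ for $m\notin\mathcal{D}_f$ together with the $\Fqn^*$-multiples $h^{-1}U_f$ for $m=f(h)/h\in\mathcal{D}_f$, plus $B_\infty$), and records that $\{B_m:m\in\mathcal{D}_f\}$ and $\{\langle(1,m)\rangle_{\Fqn}:m\in\mathcal{D}_f\}$ are hyper-reguli covering the same vector set. So your proof follows precisely the route the paper indicates: build the spread by net replacement, read off the quasifield via the standard coordinatization, and compute the kernel.

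Two minor remarks. First, in your disjointness check the appeal to ``bijectivity of $f$'' is not actually needed for the intersection with $B_\infty$ (any $(0,y)\in h^{-1}U_f$ forces $y=h^{-1}f(0)=0$ regardless); what $0\notin\mathcal{D}_f$ really buys is that $B_0=\langle(1,0)\rangle_{\Fqn}$ is a genuine Desarguesian axis, so the coordinatization quadrangle is in place. Second, your counting for the kernel is clean: for $q\ge3$ one has $|\Fqn\setminus\mathcal{D}_f|=q^n-(q^n-1)/(q-1)\ge q^n/2\ge q^n/p$, with the first inequality strict, so the complement of $\mathcal{D}_f$ cannot sit inside a proper additive subgroup and the additive map $x\mapsto k\circ x-kx$ is forced to vanish identically; the concluding $\F_q(k)$-linearity contradiction with scatteredness is exactly right.
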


Let $\cB_f=\cB(Q_f)$ and $\ptra_f$ be the planar spread of $Q_f^2$ associated with $Q_f$
and the related translation plane, respectively. Then, the elements of $\cB_f$ distinct from $B_\infty$ are of two types:
\begin{enumerate}
\item
 if $m\in\Fqn\setminus \mathcal{D}_f$, then $B_m=\langle(1,m)\rangle_{\Fqn}$;
\item
  if $m\in \mathcal{D}_f$ and $f(h)-mh=0$, $h\neq0$, then
  \[
    B_m=\{(h^{-1}x,h^{-1}f(x))\colon x\in\Fqn\}=h^{-1}U_f.
  \]
\end{enumerate}
In particular, $U_f=B_{f(1)}$ and $\{B_m\colon m\in \mathcal{D}_f\}$ and $\{\langle(1,m)\rangle_{\Fqn}\colon m\in \mathcal{D}_f\}$
are hyper-reguli covering the same vector set.

For instance, let $L_{s}^{1,n}$ be a linear set of pseudoregulus type. Then, $L_s^{1,n}$ is projectively equivalent to the linear set 
$L_{g}$ where $g=\omega X^{q^s}$ with $\omega \in \F_{q^n},  \mathrm{N}_{q^n/q}(\omega) \not \in \{0,1\}$, $\gcd(s,n)=1$. In this way, we get $0,1\notin \mathcal{D}_g$. In \cite[Section 3]{Casarino_Longobardi_Zanella}, Casarino \textit{et al.} underlined that the plane $\ptra_{g}$ associated with the scattered polynomial $g$ is the Andr\'{e }
$q$-plane  obtained by one $q^s$-Andr\'e net replacement.\\
However, if there is no need for a quasifield framework,  the assumption $0,1\notin \mathcal{D}_f$ can be deleted. Indeed, if $f\in \tilde{\mathscr{L}}_{n,q}[X]$ is a scattered $\F_q$-linearized polynomial, the collection $\mathcal{B}_f$ of the following subspaces of $\Fqn^2$:
\[
  \langle(1,m)\rangle_{\Fqn}\     (m\in\Fqn\setminus \mathcal{D}_f), \qquad   hU_f   \  (h\in\Fqn^*),
 \qquad\{0\}\times\Fqn.
\]
defines a planar spread and, hence a \emph{translation plane associated with $f$}, $\ptra_f=\ptra(\cB_f)$. However, the translation planes that come from scattered polynomials are neither coordinatizable over a semifield nor over a nearfield, see \cite[Corollary 4.4]{Casarino_Longobardi_Zanella}.\\
Let $f_1,f_2 \in \tilde{\mathscr{L}}_{n,q}[X]$ be scattered linearized polynomials, $q>3$ and let $\ptra_1:=\ptra_{f_1}$ and $\ptra_2:=\ptra_{f_2}$ be the translation planes associated. 
These are isomorphic if and only if $U_{f_1}$ and $U_{f_2}$
belong to the same orbit under the action of $\GaL(2,q^n)$, \cite[Theorem 4.2]{Casarino_Longobardi_Zanella}. This implies that any scattered $\Fq$-linear set $L=L_U$ of maximum rank in $\PG(1,q^n)$ gives rise to 
$c_\Gamma(L)$ pairwise nonisomorphic translation planes.\\
Finally, in \cite[Section 5]{Longobardi_Zanella2024} by investigating the affine collineation group of the translation plane associated with a scattered polynomial, the authors showed that, a maximum scattered linear set $L_f$ of the projective line $\PG(1,q^n)$, $n>2$ and $q>3$, is  of pseudoregulus type if and only if $\ptra_f$ is a (generalized) André plane \cite[Theorem 5.9]{Longobardi_Zanella2024}.

\section*{Acknowledgements
}
The author expresses gratitude to Prof. R. Trombetti for the valuable discussions that contributed to the drafting of this article.\\
Moreover, the author is supported by the Italian National Group for Algebraic
and Geometric Structures and their Applications (GNSAGA - INdAM) and by the European Union under the Italian National Recovery and Resilience Plan (NRRP) of NextGenerationEU, with particular reference to the partnership on \textit{"Telecommunications of the Future"}(PE00000001 - program "RESTART", CUP: D93C22000910001).

\vspace{0.5cm}

\noindent
Giovanni Longobardi,\\
Dipartimento di Matematica ed Applicazioni 'R. Caccioppoli'\\
Università degli Studi di Napoli Federico II,\\
Via Cintia, Monte S. Angelo I-80126 Napoli, Italy \\
\texttt{giovanni.longobardi@unina.it}

\end{document}